\documentclass[a4paper]{amsart}
\usepackage{etex}
\usepackage{stackrel}
\usepackage[utf8]{inputenc}
\usepackage[english]{babel}
\usepackage{amsmath}
\usepackage{amssymb}
\usepackage{amsopn}                 % defines new operators
\usepackage{amsbsy}
\usepackage{amsfonts}
\usepackage{MnSymbol}
\usepackage{amsthm}
\usepackage{newlfont}
\usepackage{amscd}
\usepackage{mathtools}              % \MoveEqLeft, \intertext
\usepackage{mathrsfs}
\usepackage{relsize}
\usepackage{tensor}
\usepackage{enumerate}
\usepackage{enumitem}
\usepackage{comment}
\usepackage{float}
\usepackage[abbreviations]{foreign} % i.e., e.g.
\usepackage{eucal}
\usepackage{xy}
\xyoption{arrow}
\xyoption{matrix}
\SilentMatrices
\xyoption{tips}
\SelectTips{cm}{10}
\xyoption{2cell}
\UseAllTwocells
\usepackage{tikz}
\usetikzlibrary{cd}
\usetikzlibrary{calc}
\usetikzlibrary{math}
\usetikzlibrary{arrows}
\usetikzlibrary{fit}
\usetikzlibrary{positioning}

% Option, used to get squiggly arrows in example
\usetikzlibrary{decorations.pathmorphing, arrows.meta}

% Triple and quadruple arrows
\tikzset{Rightarrow/.style={double equal sign distance,>={Implies},->},
triple/.style={-,preaction={draw,Rightarrow}},
quadruple/.style={preaction={draw,Rightarrow,shorten >=0pt},shorten >=1pt,-,double,double
distance=0.2pt}}
\usepackage{xcolor}
\definecolor{darkblue}{rgb}{0,0,0.3}
\usepackage[colorlinks=true, citecolor=darkblue, filecolor=darkblue, linkcolor=darkblue, urlcolor=darkblue]{hyperref}
%
% \relpenalty=10000         
% \binoppenalty=10000               % does not break f : A -> B inline
%
%%%%%%%%%%%%%%%%%%%%%%%% THEOREM Environments %%%%%%%%%%%%%%%%%%%%%%%%%%%%%%%
%
\newtheorem{thm}{Theorem}[subsection]

\newtheorem{cor}[thm]{Corollary}

\newtheorem{lemma}[thm]{Lemma}
\newtheorem{prop}[thm]{Proposition}

\theoremstyle{definition}
\newtheorem{define}[thm]{Definition}
\newtheorem{notate}[thm]{Notation}

\theoremstyle{remark}
\newtheorem{rem}[thm]{Remark}
\newtheorem{example}[thm]{Example}

\newtheorem{warning}[thm]{Warning}

\newenvironment{nscenter}%
{\parskip=0pt\par\nopagebreak\centering}
{\par\noindent\ignorespacesafterend}        % center tikzpictures
%
%%%%%%%%%%%%%%%%%%%%%%%%%%%%%%%  new commands  %%%%%%%%%%%%%%%%%%%%%%%%%%%%%%%
%
% general
%
\newcommand\nbd\nobreakdash
\newcommand{\ndef}{\emph}

\def\lrar{\longrightarrow}
%
% sets
%

\newcommand{\ZZ}{\mathbb{Z}}

\newcommand{\bS}{\mathbf{S}}        % generating set
%
% categories
%
\newcommand{\op}{\mathrm{op}}

\newcommand{\Cat}{{\mathcal{C}\mspace{-2.mu}\mathit{at}}}
\newcommand{\nCat}[1]{{#1}\hbox{\protect\nbd-}\kern1pt\Cat}	    % n-categories
             % relative categories
                   % simplicial categories
%
%\renewcommand{\plus}[1]{\mathop{\amalg}\limits_{#1}}

%

%
% fibrations
%
\newcommand{\inn}{\mathrm{inn}}
\newcommand{\out}{\mathrm{out}}
%
% simplicial stuff
%

\newcommand{\s}{\mathcal{S}\mspace{-2.mu}\text{et}_{\Delta}}
\newcommand{\sca}{\mathrm{sc}}
\newcommand{\Ss}{\mathcal{S}\mspace{-2.mu}\text{et}_{\Delta}^{\,\mathrm{sc}}}
\newcommand{\Sms}{\mathcal{S}\mspace{-2.mu}\text{et}_{\Delta}^{+, \mathrm{sc}}}

 % stratified
\newcommand{\Nsc}{\mathrm{N}^{\mathrm{sc}}}			% scaled nerve
\newcommand{\rN}{\mathrm{N}}                        % marked nerve

						% coherent realization
                             % pattern

%\newcommand{\deg}{\text{deg}}
%
% homotopy theory
%
\newcommand{\Catoo}{\Cat_{\infty}}
\newcommand{\BiCat}{\mathrm{BiCat}_{\infty}}

\newcommand{\Fun}{\mathrm{Fun}}

                       % anodyne maps
%
% pdf
%
\newcommand\pdfoo{\texorpdfstring{$\infty$}{oo}}

%
%%%%%%%%%%%%%%%%%%%%%%%%%%%   Math Operators  %%%%%%%%%%%%%%%%%%%%%%%%%%%%%%%%%
%

\DeclareMathOperator{\Hom}{Hom}

\DeclareMathOperator{\Map}{Map}

\DeclareMathOperator{\Id}{id}
\DeclareMathOperator{\co}{co}
\DeclareMathOperator{\coop}{coop}
\newcommand{\gr}{\mathrm{gr}}               % Gray

\newcommand{\opgr}{\mathrm{opgr}}
\newcommand{\LMap}{\Fun^{\opgr}}
\newcommand{\RMap}{\Fun^{\gr}}

\DeclareMathOperator{\Un}{Un}

\DeclareMathOperator{\thi}{th}
%

%
             
%

%

%
%%%%%%%%%%%%%%%%%%%%%%%%%%%%%%% Shortings %%%%%%%%%%%%%%%%%%%%%%%%%%%%%%%%%%%%%
%
\def\alp{{\alpha}}
\def\bet{{\beta}}

\def\eps{{\varepsilon}}

\def\sig{{\sigma}}

\def\Del{{\Delta}}

\def\Lam{{\Lambda}}

\def\hrar{\hookrightarrow}

\def\ovl{\overline}

\def\wtl{\widetilde}

%

%
%%%%%%%%%%%%%%%%%%%% Slices %%%%%%%%%%%%%%%%%%%%
%
\newcommand{\tr}[2]{\mathchoice
	{#1\raise -1.8pt\vbox{\hbox{$\kern -.8pt/\mathsmaller{#2} $}}}
	{#1\raise -1.8pt\vbox{\hbox{$\kern -.8pt/#2$}}\kern .8pt}
	{#1\raise -1.8pt\vbox{\hbox{$\scriptstyle\kern -.8pt /#2$}}}
	{#1\raise -1.8pt\vbox{\hbox{$\scriptscriptstyle\kern -.8pt /#2$}}}}

\newcommand{\trbis}[2]{\mathchoice
	{#1\raise -1.8pt\vbox{\hbox{$\kern -.8pt\mathsmaller{/#2} $}}}
	{#1\raise -1.8pt\vbox{\hbox{$\kern -.8pt\mathsmaller{/#2}$}}\kern .8pt}
	{#1\raise -1.8pt\vbox{\hbox{$\scriptstyle\kern -.8pt /#2$}}}
	{#1\raise -1.8pt\vbox{\hbox{$\scriptscriptstyle\kern -.8pt /#2$}}}}
\newcommand{\overslice}[2]{\mathchoice
	{#1\raise -1.8pt\vbox{\hbox{$\kern -.8pt\mathsmaller{#2/} $}}}
	{#1\raise -1.8pt\vbox{\hbox{$\kern -.8pt\mathsmaller{#2/}$}}\kern .8pt}
	{#1\raise -1.8pt\vbox{\hbox{$\scriptstyle\kern -.8pt #2/$}}}
	{#1\raise -1.8pt\vbox{\hbox{$\scriptscriptstyle\kern -.8pt #2/$}}}}

%
%
%%%%%%%%%%%%%%%%%%% Triangle %%%%%%%%%%%%%%%%%%
%
%

	\makeatletter

% Setup keys for triangle macro.
% Used to specify:
% - Labels for arrows.
% - Styles for labels and arrows.

\def\labelstylecode@triangle#1{%
	\pgfkeys@split@path%
	\edef\label@key{/triangle/label/\pgfkeyscurrentname}%
	\edef\style@key{\pgfkeyscurrentkey/.@val}%
	\def\temp@a{#1}%
	\def\temp@b{\pgfkeysnovalue}%
	\ifx\temp@a\temp@b
	\pgfkeysgetvalue{\label@key}\temp@a
	\ifx\temp@a\temp@b\else
	\pgfkeysalso{commutative diagrams/.cd, \style@key}%
	\fi
	\else
	\pgfkeys{\style@key/.code = \pgfkeysalso{#1}}%
	\fi}
\def\arrowstylecode@triangle#1{%
	\edef\style@key{\pgfkeyscurrentkey/.@val}%
	\def\temp@a{#1}%
	\def\temp@b{\pgfkeysnovalue}%
	\ifx\temp@a\temp@b
	\pgfkeysalso{commutative diagrams/.cd, \style@key}%
	\else
	\pgfkeys{\style@key/.code = \pgfkeysalso{#1}}%
	\fi}

% Don't change these defaults here. These can be changed locally in the
% key-value list provided in a call to the morphism of triangles macro.

\pgfkeys{
	/triangle/label/.cd,
	0/.initial = {$\bullet$}, 1/.initial = {$\bullet$}, 2/.initial = {$\bullet$},
	01/.initial, 12/.initial, 02/.initial,
	012/.initial,
	/triangle/labelstyle/.cd,
	01/.@val/.initial=\pgfkeysalso{swap}, 12/.@val/.initial=\pgfkeysalso{swap},
	02/.@val/.initial,
	012/.@val/.initial=\pgfkeysalso{near start},
	01/.code=\labelstylecode@triangle{#1}, 02/.code=\labelstylecode@triangle{#1},
	12/.code=\labelstylecode@triangle{#1}, 012/.code=\labelstylecode@triangle{#1},
	/triangle/arrowstyle/.cd,
	01/.@val/.initial, 12/.@val/.initial,
	02/.@val/.initial,
	012/.@val/.initial=\pgfkeysalso{Rightarrow},
	01/.code=\arrowstylecode@triangle{#1}, 02/.code=\arrowstylecode@triangle{#1},
	12/.code=\arrowstylecode@triangle{#1}, 012/.code=\arrowstylecode@triangle{#1},
}

% Macros to draw 2-cells.

\def\tr@abc{%
	\draw [/triangle/arrowstyle/012] (90:0.20) --
	node [/triangle/labelstyle/012] {
		\pgfkeysvalueof{/triangle/label/012}} (270:0.10);
}

% Macro to draw a single triangle node.

\def\tr@#1#2{
	\begin{scope}[shift=#2, commutative diagrams/every diagram]
		
		\node (n{#1}0) at (150:1) {
			\pgfkeysvalueof{/triangle/label/0}};
		\node (n{#1}1) at (270:0.6) {
			\pgfkeysvalueof{/triangle/label/1}};
		\node (n{#1}2) at (30:1) {
			\pgfkeysvalueof{/triangle/label/2}};			
		
		\node (s#1) at (0,0) [circle, inner sep = 0pt,
		fit = (n{#1}0.center)(n{#1}1.center)(n{#1}2.center)] {};
		
		\begin{scope}[commutative diagrams/.cd, every arrow, every label]
			\ifcase #1
			\def\list{0/1, 1/2, 0/2}\or
			\def\list{0/1, 1/2, 0/2}\else
			\def\list{}\fi
			
			\foreach \s / \e in \list {
				\draw [/triangle/arrowstyle/\s\e] (n{#1}\s) --
				node [/triangle/labelstyle/\s\e] {
					\pgfkeysvalueof{/triangle/label/\s\e}} (n{#1}\e);
			}
			
			\ifcase #1
			\tr@abc\or
			\tr@abc
			\else\fi
			
		\end{scope}
	\end{scope}
}

% The 3-simplex macro itself.
%
% This is the only user callable macro provided here.
%
% Takes a single parameter consisting of a list of key value pairs. These are
% processed using the keys mechanism (pgfkeys) built into TikZ, so they follow
% a familiar format.
%
% /triangle/label/XXX = ....      Set code to typeset label of arrow XXX
% /triangle/arrowstyle/XXX = ...  Set TikZ format for typesetting of arrow XXX.
%                                 Overrides default.
% /triangle/labelstyle/XXX = ...  Set TikZ format for typesetting of label on
%                                 arrow XXX. Overrides default.
%
% Here XXX denotes a sequence of integers identifying a generating cell by
% listing the numbers of the vertices (in order) of the corresponding face of
% the 4-simplex.
%
% You may use the path setting mechanism of pgfkeys to avoid too much path noise
% in a key value list. For example:
%
%   \triangle{/triangle/label/.cd,
%     0=$0$, 1=$1$, 2=$2$,
%     01=$a$, 12=$b$, 02=$f$,
%     012=$\alpha$}
%

\def\triangle#1{
	\pgfkeys{#1}
	\tr@{0}{(0:0)}
}

\makeatother

%
%
%%%%%%%%%%%%%%%%%%% Square %%%%%%%%%%%%%%%%%%%%%
% 
%

\makeatletter

% Setup keys for square macro.
% Used to specify:
% - Labels for arrows.
% - Styles for labels and arrows.

\def\labelstylecode@square#1{%
	\pgfkeys@split@path%
	\edef\label@key{/square/label/\pgfkeyscurrentname}%
	\edef\style@key{\pgfkeyscurrentkey/.@val}%
	\def\temp@a{#1}%
	\def\temp@b{\pgfkeysnovalue}%
	\ifx\temp@a\temp@b
	\pgfkeysgetvalue{\label@key}\temp@a
	\ifx\temp@a\temp@b\else
	\pgfkeysalso{commutative diagrams/.cd, \style@key}%
	\fi
	\else
	\pgfkeys{\style@key/.code = \pgfkeysalso{#1}}%
	\fi}
\def\arrowstylecode@square#1{%
	\edef\style@key{\pgfkeyscurrentkey/.@val}%
	\def\temp@a{#1}%
	\def\temp@b{\pgfkeysnovalue}%
	\ifx\temp@a\temp@b
	\pgfkeysalso{commutative diagrams/.cd, \style@key}%
	\else
	\pgfkeys{\style@key/.code = \pgfkeysalso{#1}}%
	\fi}

% Don't change these defaults here. These can be changed locally in the
% key-value list provided in a call to the morphism of squares macro.

\pgfkeys{
	/square/label/.cd,
	0/.initial = {$\bullet$}, 1/.initial = {$\bullet$}, 2/.initial = {$\bullet$},
	3/.initial = {$\bullet$},
	01/.initial, 12/.initial, 23/.initial,
	02/.initial, 03/.initial, 13/.initial,
	012/.initial, 013/.initial, 023/.initial, 123/.initial,
	0123/.initial,
	/square/labelstyle/.cd,
	01/.@val/.initial=\pgfkeysalso{swap},
	12/.@val/.initial=\pgfkeysalso{swap},
	23/.@val/.initial=\pgfkeysalso{swap},
	03/.@val/.initial,
	02/.@val/.initial=\pgfkeysalso{description},
	13/.@val/.initial=\pgfkeysalso{description},
	012/.@val/.initial=\pgfkeysalso{above left = -1pt and -1pt},
	013/.@val/.initial=\pgfkeysalso{swap, right},
	023/.@val/.initial=\pgfkeysalso{left},
	123/.@val/.initial=\pgfkeysalso{swap, above right = -1pt and -1pt},
	0123/.@val/.initial,
	01/.code=\labelstylecode@square{#1}, 02/.code=\labelstylecode@square{#1},
	03/.code=\labelstylecode@square{#1}, 12/.code=\labelstylecode@square{#1},
	13/.code=\labelstylecode@square{#1}, 23/.code=\labelstylecode@square{#1},
	012/.code=\labelstylecode@square{#1}, 013/.code=\labelstylecode@square{#1},
	023/.code=\labelstylecode@square{#1}, 123/.code=\labelstylecode@square{#1},
	0123/.code=\labelstylecode@square{#1},
	/square/arrowstyle/.cd,
	01/.@val/.initial, 12/.@val/.initial, 23/.@val/.initial,
	02/.@val/.initial, 03/.@val/.initial, 13/.@val/.initial,
	012/.@val/.initial=\pgfkeysalso{Rightarrow},
	013/.@val/.initial=\pgfkeysalso{Rightarrow},
	023/.@val/.initial=\pgfkeysalso{Rightarrow},
	123/.@val/.initial=\pgfkeysalso{Rightarrow},
	0123/.@val/.initial=\pgfkeysalso{Rightarrow},
	01/.code=\arrowstylecode@square{#1}, 02/.code=\arrowstylecode@square{#1},
	03/.code=\arrowstylecode@square{#1}, 12/.code=\arrowstylecode@square{#1},
	13/.code=\arrowstylecode@square{#1}, 23/.code=\arrowstylecode@square{#1},
	012/.code=\arrowstylecode@square{#1}, 013/.code=\arrowstylecode@square{#1},
	023/.code=\arrowstylecode@square{#1}, 123/.code=\arrowstylecode@square{#1},
	0123/.code=\arrowstylecode@square{#1}
}

% Macros to draw 2-cells.

\def\sq@abc{%
	\draw [/square/arrowstyle/012] (235:0.25) --
	node [/square/labelstyle/012] {
		\pgfkeysvalueof{/square/label/012}} (235:0.6);
}
\def\sq@bcd{%
	\draw [/square/arrowstyle/123] (-54:0.25) --
	node [/square/labelstyle/123] {
		\pgfkeysvalueof{/square/label/123}} (-54:0.6);
}
\def\sq@acd{%
	\draw [/square/arrowstyle/023] (55:0.55) --
	node [/square/labelstyle/023] {
		\pgfkeysvalueof{/square/label/023}} (15:0.45);
}
\def\sq@abd{%
	\draw [/square/arrowstyle/013] (125:0.55) --
	node [/square/labelstyle/013] {
		\pgfkeysvalueof{/square/label/013}} (165:0.45);
}

% Macro to draw a single square node.

\def\sq@#1#2{
	\begin{scope}[shift=#2, commutative diagrams/every diagram]
		
		\foreach \i in {0,1,2,3} {
			\tikzmath{\a = 135 + (90 * \i);}
			\node (n{#1}\i) at (\a:1) {
				\pgfkeysvalueof{/square/label/\i}};
		}
		
		\node (s#1) at (0,0) [circle, inner sep = 0pt,
		fit = (n{#1}0.center)(n{#1}1.center)(n{#1}2.center)
		(n{#1}3.center)] {};
		
		\begin{scope}[commutative diagrams/.cd, every arrow, every label]
			\ifcase #1
			\def\list{0/1, 1/2, 2/3, 0/2, 0/3}\or
			\def\list{0/1, 1/2, 2/3, 1/3, 0/3}\else
			\def\list{}\fi
			
			\foreach \s / \e in \list {
				\draw [/square/arrowstyle/\s\e] (n{#1}\s) --
				node [/square/labelstyle/\s\e] {
					\pgfkeysvalueof{/square/label/\s\e}} (n{#1}\e);
			}
			
			\ifcase #1
			\sq@abc\sq@acd\or
			\sq@abd\sq@bcd
			\else\fi
			
		\end{scope}
	\end{scope}
}

% The 3-simplex macro itself.
%
% This is the only user callable macro provided here.
%
% Takes a single parameter consisting of a list of key value pairs. These are
% processed using the keys mechanism (pgfkeys) built into TikZ, so they follow
% a familiar format.
%
% /square/label/XXX = ....      Set code to typeset label of arrow XXX
% /square/arrowstyle/XXX = ...  Set TikZ format for typesetting of arrow XXX.
%                                 Overrides default.
% /square/labelstyle/XXX = ...  Set TikZ format for typesetting of label on
%                                 arrow XXX. Overrides default.
%
% Here XXX denotes a sequence of integers identifying a generating cell by
% listing the numbers of the vertices (in order) of the corresponding face of
% the 4-simplex.
%
% You may use the path setting mechanism of pgfkeys to avoid too much path noise
% in a key value list. For example:
%
%   \square{/square/label/.cd,
%     0=$0$, 1=$1$, 2=$2$, 3=$3$,
%     01=$a$, 12=$b$, 23=$c$, 02=$f$, 03=$g$, 13=$h$,
%     012=$\alpha$, 023=$\beta$, 123=$\gamma$, 013=$\delta$,
%     0123=$\mathcal{F}$}
%

\def\square#1{
	\pgfkeys{#1}
	\sq@{0}{(180:2)}\sq@{1}{(0:2)}
	
	\begin{scope}[commutative diagrams/.cd, every arrow, every label]
		\draw[->] [shorten >=10pt, shorten <=10pt, /square/arrowstyle/0123] (s0) --
		node [/square/labelstyle/0123] {%
			\pgfkeysvalueof{/square/label/0123}} (s1);
		
	\end{scope}
}

\makeatother

%
%
%%%%%%%%%%%%%%%%%%%%%%% Pentagon %%%%%%%%%%%%%%%%%%%
%
%

\makeatletter

% Setup keys for pentagon macro.
% Used to specify:
% - Labels for arrows.
% - Styles for labels and arrows.

\def\labelstylecode@pent#1{%
	\pgfkeys@split@path%
	\edef\label@key{/pentagon/label/\pgfkeyscurrentname}%
	\edef\style@key{\pgfkeyscurrentkey/.@val}%
	\def\temp@a{#1}%
	\def\temp@b{\pgfkeysnovalue}%
	\ifx\temp@a\temp@b
	\pgfkeysgetvalue{\label@key}\temp@a
	\ifx\temp@a\temp@b\else
	\pgfkeysalso{commutative diagrams/.cd, \style@key}%
	\fi
	\else
	\pgfkeys{\style@key/.code = \pgfkeysalso{#1}}%
	\fi}
\def\arrowstylecode@pent#1{%
	\edef\style@key{\pgfkeyscurrentkey/.@val}%
	\def\temp@a{#1}%
	\def\temp@b{\pgfkeysnovalue}%
	\ifx\temp@a\temp@b
	\pgfkeysalso{commutative diagrams/.cd, \style@key}%
	\else
	\pgfkeys{\style@key/.code = \pgfkeysalso{#1}}%
	\fi}

% Don't change these defaults here. These can be changed locally in the
% key-value list provided in a call to the pentagon of pentagons macro.

\pgfkeys{
	/pentagon/label/.cd,
	0/.initial = {$\bullet$}, 1/.initial = {$\bullet$}, 2/.initial = {$\bullet$},
	3/.initial = {$\bullet$}, 4/.initial = {$\bullet$},
	01/.initial, 12/.initial, 23/.initial, 34/.initial, 04/.initial,
	02/.initial, 03/.initial, 13/.initial, 14/.initial, 24/.initial,
	012/.initial, 013/.initial, 014/.initial, 023/.initial, 024/.initial,
	034/.initial, 123/.initial, 124/.initial, 134/.initial, 234/.initial,
	0123/.initial, 0124/.initial, 0134/.initial, 0234/.initial, 1234/.initial,
	01234/.initial,
	/pentagon/labelstyle/.cd,
	01/.@val/.initial, 12/.@val/.initial, 23/.@val/.initial, 34/.@val/.initial,
	04/.@val/.initial=\pgfkeysalso{swap},
	02/.@val/.initial=\pgfkeysalso{description},
	03/.@val/.initial=\pgfkeysalso{description},
	13/.@val/.initial=\pgfkeysalso{description},
	14/.@val/.initial=\pgfkeysalso{description},
	24/.@val/.initial=\pgfkeysalso{description},
	012/.@val/.initial=\pgfkeysalso{swap, above},
	013/.@val/.initial=\pgfkeysalso{below = 1pt},
	014/.@val/.initial=\pgfkeysalso{below},
	023/.@val/.initial=\pgfkeysalso{swap, above = 1pt},
	024/.@val/.initial=\pgfkeysalso{below left = -1pt and -1pt},
	034/.@val/.initial=\pgfkeysalso{swap, right},
	123/.@val/.initial=\pgfkeysalso{below left = -1pt and -1pt},
	124/.@val/.initial=\pgfkeysalso{swap, right},
	134/.@val/.initial=\pgfkeysalso{left},
	234/.@val/.initial=\pgfkeysalso{swap, below right = -1pt and -1pt},
	0123/.@val/.initial,
	0124/.@val/.initial=\pgfkeysalso{swap},
	0134/.@val/.initial,
	0234/.@val/.initial=\pgfkeysalso{swap},
	1234/.@val/.initial,
	01234/.@val/.initial,
	01/.code=\labelstylecode@pent{#1}, 02/.code=\labelstylecode@pent{#1},
	03/.code=\labelstylecode@pent{#1}, 04/.code=\labelstylecode@pent{#1},
	12/.code=\labelstylecode@pent{#1}, 13/.code=\labelstylecode@pent{#1},
	14/.code=\labelstylecode@pent{#1}, 23/.code=\labelstylecode@pent{#1},
	24/.code=\labelstylecode@pent{#1}, 34/.code=\labelstylecode@pent{#1},
	012/.code=\labelstylecode@pent{#1}, 013/.code=\labelstylecode@pent{#1},
	014/.code=\labelstylecode@pent{#1}, 023/.code=\labelstylecode@pent{#1},
	024/.code=\labelstylecode@pent{#1}, 034/.code=\labelstylecode@pent{#1},
	123/.code=\labelstylecode@pent{#1}, 124/.code=\labelstylecode@pent{#1},
	134/.code=\labelstylecode@pent{#1}, 234/.code=\labelstylecode@pent{#1},
	0123/.code=\labelstylecode@pent{#1}, 0124/.code=\labelstylecode@pent{#1},
	0134/.code=\labelstylecode@pent{#1}, 0234/.code=\labelstylecode@pent{#1},
	1234/.code=\labelstylecode@pent{#1}, 01234/.code=\labelstylecode@pent{#1},
	/pentagon/arrowstyle/.cd,
	01/.@val/.initial, 12/.@val/.initial, 23/.@val/.initial, 34/.@val/.initial,
	04/.@val/.initial, 02/.@val/.initial, 03/.@val/.initial, 13/.@val/.initial,
	14/.@val/.initial, 24/.@val/.initial,
	012/.@val/.initial=\pgfkeysalso{Rightarrow},
	013/.@val/.initial=\pgfkeysalso{Rightarrow},
	014/.@val/.initial=\pgfkeysalso{Rightarrow},
	023/.@val/.initial=\pgfkeysalso{Rightarrow},
	024/.@val/.initial=\pgfkeysalso{Rightarrow},
	034/.@val/.initial=\pgfkeysalso{Rightarrow},
	123/.@val/.initial=\pgfkeysalso{Rightarrow},
	124/.@val/.initial=\pgfkeysalso{Rightarrow},
	134/.@val/.initial=\pgfkeysalso{Rightarrow},
	234/.@val/.initial=\pgfkeysalso{Rightarrow},
	0123/.@val/.initial=\pgfkeysalso{triple},
	0124/.@val/.initial=\pgfkeysalso{triple},
	0134/.@val/.initial=\pgfkeysalso{triple},
	0234/.@val/.initial=\pgfkeysalso{triple},
	1234/.@val/.initial=\pgfkeysalso{triple},
	01234/.@val/.initial=\pgfkeysalso{quadruple},
	01/.code=\arrowstylecode@pent{#1}, 02/.code=\arrowstylecode@pent{#1},
	03/.code=\arrowstylecode@pent{#1}, 04/.code=\arrowstylecode@pent{#1},
	12/.code=\arrowstylecode@pent{#1}, 13/.code=\arrowstylecode@pent{#1},
	14/.code=\arrowstylecode@pent{#1}, 23/.code=\arrowstylecode@pent{#1},
	24/.code=\arrowstylecode@pent{#1}, 34/.code=\arrowstylecode@pent{#1},
	012/.code=\arrowstylecode@pent{#1}, 013/.code=\arrowstylecode@pent{#1},
	014/.code=\arrowstylecode@pent{#1}, 023/.code=\arrowstylecode@pent{#1},
	024/.code=\arrowstylecode@pent{#1}, 034/.code=\arrowstylecode@pent{#1},
	123/.code=\arrowstylecode@pent{#1}, 124/.code=\arrowstylecode@pent{#1},
	134/.code=\arrowstylecode@pent{#1}, 234/.code=\arrowstylecode@pent{#1},
	0123/.code=\arrowstylecode@pent{#1}, 0124/.code=\arrowstylecode@pent{#1},
	0134/.code=\arrowstylecode@pent{#1}, 0234/.code=\arrowstylecode@pent{#1},
	1234/.code=\arrowstylecode@pent{#1}, 01234/.code=\arrowstylecode@pent{#1}
}

% Macros to draw 2-cells.

\def\pent@abc{%
	\draw [/pentagon/arrowstyle/012] (198:0.45) --
	node [/pentagon/labelstyle/012] {
		\pgfkeysvalueof{/pentagon/label/012}} (198:0.8);
}
\def\pent@bcd{%
	\draw [/pentagon/arrowstyle/123] (126:0.45) --
	node [/pentagon/labelstyle/123] {
		\pgfkeysvalueof{/pentagon/label/123}} (126:0.8);
}
\def\pent@cde{%
	\draw [/pentagon/arrowstyle/234] (54:0.45) --
	node [/pentagon/labelstyle/234] {
		\pgfkeysvalueof{/pentagon/label/234}} (54:0.8);
}
\def\pent@ade{%
	\draw [/pentagon/arrowstyle/034] (-40:0.6) --
	node [/pentagon/labelstyle/034] {
		\pgfkeysvalueof{/pentagon/label/034}} (-5:0.5);
}
\def\pent@abe{%014
	\draw [/pentagon/arrowstyle/014] (-70:0.55) --
	node [/pentagon/labelstyle/014] {
		\pgfkeysvalueof{/pentagon/label/014}} (-110:0.55);
}
\def\pent@acd{%
	\draw [/pentagon/arrowstyle/023] (55:0.3) --
	node [/pentagon/labelstyle/023] {
		\pgfkeysvalueof{/pentagon/label/023}} (125:0.3);
}
\def\pent@bde{%
	\draw [/pentagon/arrowstyle/134] (-5:0.4) --
	node [/pentagon/labelstyle/134] {
		\pgfkeysvalueof{/pentagon/label/134}} (35:0.5);
}
\def\pent@ace{%
	\draw [/pentagon/arrowstyle/024] (-45:0.45) --
	node [/pentagon/labelstyle/024] {
		\pgfkeysvalueof{/pentagon/label/024}} (-45:0.1);
}
\def\pent@abd{%
	\draw [/pentagon/arrowstyle/013] (-90:0.22) --
	node [/pentagon/labelstyle/013] {
		\pgfkeysvalueof{/pentagon/label/013}} (-150:0.46);
}
\def\pent@bce{%
	\draw [/pentagon/arrowstyle/124] (188:0.4) --
	node [/pentagon/labelstyle/124] {
		\pgfkeysvalueof{/pentagon/label/124}} (150:0.55);
}

% Macro to draw a single pentagonal node.

\def\pent@#1#2{
	\begin{scope}[shift=#2, commutative diagrams/every diagram]
		
		\foreach \i in {0,1,2,3,4} {
			\tikzmath{\a = 270 - (72 * \i);}
			\node (n{#1}\i) at (\a:1) {
				\pgfkeysvalueof{/pentagon/label/\i}};
		}
		
		\node (p#1) at (0,0) [circle, inner sep = 0pt,
		fit = (n{#1}0.center)(n{#1}1.center)(n{#1}2.center)
		(n{#1}3.center)(n{#1}4.center)] {};
		
		\begin{scope}[commutative diagrams/.cd, every arrow, every label]
			\ifcase #1
			\def\list{0/1, 1/2, 2/3, 3/4, 0/4, 0/2, 0/3}\or
			\def\list{0/1, 1/2, 2/3, 3/4, 0/4, 1/3, 1/4}\or
			\def\list{0/1, 1/2, 2/3, 3/4, 0/4, 0/2, 2/4}\or
			\def\list{0/1, 1/2, 2/3, 3/4, 0/4, 0/3, 1/3}\or
			\def\list{0/1, 1/2, 2/3, 3/4, 0/4, 1/4, 2/4}\else
			\def\list{}\fi
			
			\foreach \s / \e in \list {
				\draw [/pentagon/arrowstyle/\s\e] (n{#1}\s) --
				node [/pentagon/labelstyle/\s\e] {
					\pgfkeysvalueof{/pentagon/label/\s\e}} (n{#1}\e);
			}
			
			\ifcase #1
			\pent@abc\pent@acd\pent@ade\or
			\pent@bcd\pent@bde\pent@abe\or
			\pent@cde\pent@ace\pent@abc\or
			\pent@ade\pent@abd\pent@bcd\or
			\pent@abe\pent@bce\pent@cde
			\else\fi
			
		\end{scope}
	\end{scope}
}

\makeatother

\newcommand{\A}{\mathcal{A}}
\newcommand{\B}{\mathcal{B}}
\newcommand{\C}{\mathcal{C}}
\newcommand{\D}{\mathcal{D}}
\newcommand{\E}{\mathcal{E}}

\newcommand{\X}{\mathcal{X}}

\newcommand{\Car}{\mathrm{Car}}
\newcommand{\coCar}{\mathrm{coCar}}

\renewcommand{\plus}[1]{\mathop{\amalg}\limits_{#1}}

\newcommand{\fCs}{\mathfrak{C}^{\mathrm{sc}}}
\newcommand{\Maptr}{\Hom^{\triangleright}}
\newcommand{\uMaptr}{\ovl{\Hom}^{\triangleright}}

\def\lrar{\rightarrow}

\usepackage[font=small,format=hang,labelfont={sf,bf}]{caption}

\newcommand{\minisimeq}{\scalebox{0.8}{\ensuremath\simeq}}

\newcommand{\id}{\Id}

\def\lrar{\longrightarrow}

\newcommand{\dom}{\mathrm{d}}

\makeatletter

\renewcommand{\tocsection}[3]{%
\indentlabel{\@ifnotempty{#2}{\parbox[b]{3ex}{\bfseries\ignorespaces#1 #2}}}\bfseries#3} 

\renewcommand{\tocsubsection}[3]{%
\indentlabel{\@ifnotempty{#2}{\hspace{1.6em}\parbox[b]{5ex}{\ignorespaces#1 #2}}}#3}

\makeatother

\setlist[itemize]{leftmargin=*}
\setlist[enumerate]{leftmargin=*}

%%%%%%%%%%%%%%%%%%%%%%%%%%%%%%%%%%%%%%%
%
% Title
%
\title{Cartesian fibrations of \((\infty,2)\)-categories}
%
% Authors
%
\author{Andrea Gagna}
\address{Institute of Mathematics, Czech Academy of Sciences\\ \v{Z}itn\'a 25 \\115 67   Praha 1\\ Czech Republic}
\email{gagna@math.cas.cz}
\urladdr{https://sites.google.com/view/andreagagna/home}
\author{Yonatan Harpaz}
\address{Institut Galilée\\ Université Paris 13\\ 99 avenue Jean-Baptiste Clément\\ 93430 Villeta-neuse\\ France}
\email{harpaz@math.univ-paris13.fr}
\urladdr{https://www.math.univ-paris13.fr/~harpaz}
\author{Edoardo Lanari}
\address{Institute of Mathematics, Czech Academy of Sciences\\ \v{Z}itn\'a 25 \\115 67   Praha 1\\ Czech Republic}
\email{edoardo.lanari.el@gmail.com}
\urladdr{https://edolana.github.io/}

\subjclass[2020]{18N65, 18N50, 55U35}
%
%
%%%%%%%%%%%%%%%%%%%%%%%%%%%%%% Begin Document %%%%%%%%%%%%%%%%%%%%%%%%%%%%%%
%
%
\begin{document}
%
% Abstract
%
\begin{abstract}
In this article we introduce four variance flavours of (co)cartesian fibrations of \(\infty\)-bicategories
with \(\infty\)-bicategorical fibres, in the framework of scaled simplicial sets. 
Given a map \(p\colon \E \rightarrow\B\) of \(\infty\)-bicategories, we define \(p\)\nbd-(co)\-car\-tesian arrows and inner/outer triangles by means of lifting properties against~\(p\), leading to a notion of 2-inner/outer (co)cartesian fibrations as those maps with enough (co)cartesian lifts for arrows and enough inner/outer lifts for triangles, together with a compatibility property
with respect to whiskerings in the outer case. By doing so, we also recover in particular the case of \(\infty\)-bicategories fibred in \(\infty\)-categories studied in previous work. We also prove that equivalences of such fibrations can be tested fibrewise.
As a motivating example, we show that the domain projection \(\mathrm{d}\colon\RMap(\Delta^1,\C)\rightarrow \C\) is a prototypical example of a 2-outer cartesian fibration, where \(\RMap(X,Y)\) denotes the \(\infty\)-bicategory of functors, lax natural transformations and modifications.
We then define 2-inner and 2-outer flavours of (co)cartesian fibrations 
of categories enriched in \(\infty\)-categories, 
and we show that a fibration \(p\colon \E \rightarrow\B\) of such enriched categories is a (co)cartesian 2-inner/outer fibration if and only if the corresponding map \(\rN^{\sca}(p)\colon \rN^{\sca}\E \rightarrow \rN^{\sca}\B\) is a fibration of this type between \(\infty\)-bicategories.
\end{abstract}

\maketitle

\tableofcontents
\section*{Introduction}
The present paper is part of an ongoing series of works on the theory of \((\infty,2)\)-categories. We will generally refer to these as \(\infty\)-bicategories, and identify them with scaled simplicial sets satisfying suitable extension properties. Our goal here is to set up the fundamentals of a theory of (co)cartesian fibrations \(\E \to \B\) of \(\infty\)-bicategories, whose fibres encode a family of \(\infty\)-bicategories \(\E_b\) depending functorially on \(b \in \B\). When \(\E\) and \(\B\) are \(\infty\)-categories the corresponding notion of (co)cartesian fibration was set up by Lurie in~\cite{HTT}. Generalizing classical work of Grothendieck, Lurie showed that such fibrations over a base \(\infty\)-category \(\B\) are in complete correspondence with functors \(\B \to \Catoo\) in the cocartesian case, and contravariant such functors in the cartesian case. This Grothendieck--Lurie correspondence plays a key role in higher category theory, as it permits to handle highly coherent pieces of structure, such as functors, in a relatively accessible manner. 

When coming to consider the \(\infty\)-bicategorical counterpart, an immediate difference presents itself:  
here we have not just two different variances, but four, depending on whether or not the functorial dependence on 2-morphisms is covariant or contravariant. This additional axis of symmetry is already visible when the base is an \(\infty\)-bicategory and the fibres \(\E_b\) are \(\infty\)-categories, since \(\Catoo\) itself has a non-trivial \(\infty\)-bicategorical structure. A thorough treatment of this case was taken up in the previous work~\cite{GagnaHarpazLanariLaxLimits}, where we have used the term inner (co)cartesian fibrations to indicate those cases where the dependence on 2-morphisms matches the one on 1-morphisms, and outer (co)cartesian fibration for those where these two dependencies have opposite variance. The division into these two types is also visible on the level of mapping \(\infty\)-categories: when both variances match (the inner case) the induced functor on mapping \(\infty\)-bicategories is a right fibration, and when they don't (the outer case) the induced functor on mapping \(\infty\)-categories is a left fibration. One of the main results of~\cite{GagnaHarpazLanariLaxLimits} is an \(\infty\)-bicategorical Grothendieck--Lurie correspondence for all the four variances. The proof crucially relies on the work of Lurie~\cite{LurieGoodwillie} in the inner cocartesian case, and the straightening-unstraightening Quillen equivalence he proved in that setting.

Our goal in the present paper is to extend these notions of fibrations to the setting where both the base \(\B\) and the fibres \(\E_b\) are \(\infty\)-bicategories. This requires defining, in addition to a notion of (co)cartesian 1-morphisms, a suitable notion of (co)cartesian 2-morphism. Working in the setting of scaled simplicial sets, such 2-morphisms are determined by triangles, though this correspondence is not perfect (the same 2-morphism can be encoded by different triangles corresponding to different factorizations of the target 1-morphism), and one needs to be a bit more careful about how to define this. To match with the notation for fibrations, we use the terms \emph{inner} and \emph{outer} to describe triangles which roughly correspond to cartesian and cocartesian 2-morphisms. For somewhat technical reasons one needs to separate those further into two types, which we call left and right inner/outer triangles. This distinction turns out to behave slightly differently in the inner and outer cases, an issue due to the built-in a-symmetry of triangles, which encode a 2-morphism together with a factorization of its target 1-morphism, but not of its domain.

In the eventual continuation of the present work we plan to establish a complete Grothendieck--Lurie correspondence for such fibrations over a fixed base \(\B\), showing that they encode the four possible variances of functors \(\B \to \BiCat\). Though we will not arrive to this goal yet here, we do lay the foundations and prove the basic properties we expect to need, based on our current work in progress in that direction. In particular, we prove that equivalences between such fibrations are detected on fibres, and construct a certain universal example in the form of the domain projection \(\RMap(\Del^1,\B) \to \B\), where \(\RMap(\Del^1,\B)\) is the \(\infty\)-bicategory of arrows in \(\B\) and lax squares between them. We then establish a comparison between the notions of inner/outer (co)cartesian fibrations and an analogous one in the setting of categories enriched in marked simplicial sets. Such a comparison played an important role in~\cite{GagnaHarpazLanariLaxLimits} by implying that the \((\ZZ/2 \times \ZZ/2)\)-symmetry of the theory of \(\infty\)-bicategories switched between the four types of fibrations. This allowed one to reduce the Grothendieck--Lurie correspondence in~\cite{GagnaHarpazLanariLaxLimits} to the inner cocartesian case, and we expect a similar role to be played in a future generalization of that correspondence to fibrations. 

After finishing the work on the present paper, we became aware of indepedent work of Abell\'{a}n Garc\'{i}a and Stern~\cite{GarciaStern2Fib}, 
which investigates the outer variant of these fibrations 
using a model category structure on marked biscaled 
simplicial sets. 
By contrast, our approach here consistently covers all four variances, and also interacts well with the framework we set up in~\cite{GagnaHarpazLanariLaxLimits}, a property we expect would be useful in a future proof of a complete \(\infty\)-bicategorical Grothendieck--Lurie correspondence for inner/outer (co)cartesian fibrations.  
Finally, let us also note that a Grothendieck--Lurie type correspondence for outer cartesian fibrations is sketched in the Appendix of~\cite{GaitsgoryRozenblyumStudy}, though the argument relies in various parts on unproven statements. The extensive treament of derived algebraic geomtery developed in the body of~\cite{GaitsgoryRozenblyumStudy} based on that appendix makes for a powerful motivation for obtaining the \(\infty\)-bicategorical Grothendieck--Lurie correspondence rigorously. We also plan to pursue the study of inner/outer \emph{locally} (co)cartesian fibrations, which encode
lax/oplax 2-functors from an \(\infty\)-bicategory \(\B\) to \(\BiCat\),
as defined in~\cite[\S 3]{GagnaHarpazLanariGrayLaxFunctors}. This will enable one to compare that last definition with the one of~\cite{GaitsgoryRozenblyumStudy}, which should also allow for the comparison of the two notions of Gray products, thus establishing many key unproven statements involving Gray products made in the appendix of loc.~cit. The results of the present work are fundamental preliminaries for all these applications.

The present paper is organized as follows. We begin with a preliminary section where we give pointers to the necessary definitions and results concerning marked and scaled simplicial sets, and recall the framework of inner/outer (co)cartesian fibrations set up in~\cite{GagnaHarpazLanariLaxLimits}, which we rename 1-inner/outer (co)cartesian fibration in order to better distinguish between them and the type of fibrations introduced in the present paper.
Next, we introduce the main concepts of this paper, namely those of left/right inner/outer triangles and the corresponding notions of fibrations. We establish fundamental results concerning closure properties of inner/outer triangles, as well as stability properties and a fibre-wise criterion to test equivalences of fibrations. Finally, we prove Theorem \ref{thm:domain proj}, which concerns the domain projection on an \(\infty\)-bicategory and provides a prototypical example of a 2-outer cartesian fibration. 

In Section~\ref{sec:enriched-fibrations} we briefly recall the theory of (co)cartesian fibrations for ordinary 2-categories, as developed by Buckley in \cite{BuckleyFibred}, as a motivation for its enhancement to the setting of categories enriched in marked simplicial sets. We then provide an extension of the results of \cite[\S 3]{GagnaHarpazLanariLaxLimits}, by showing that enriched 2-inner/outer (co)cartesian fibrations identify via the coherent nerve functor with 
the respective notion of fibrations between \(\infty\)-bicategories (see theorem~\ref{N detects fib cart} for the precise statement).

\section*{Vistas and applications}
A theory of fibrations provides the backbone to define symmetric monoidal \((\infty,2)\)-categories, as a suitable class of inner cocartesian fibrations over \(\mathrm{Fin}_\ast\). We expect symmetric monoidal \(\infty\)-bicategories
to play a fundamental role as their 1-dimensional counterpart, providing extra expressive power thanks to the 2-dimensional structure.
For instance, they can be used to encode the relevant dualities of Ind-coherent sheaves of (derived) schemes
of finite type, as in~\cite[Ch.~9]{GaitsgoryRozenblyumStudy}.

Furthermore, inner/outer (locally) (co)cartesian fibrations are used as a tool in~\cite[Ch.~11 \& 12]{GaitsgoryRozenblyumStudy} to establish an adjoint theorem that plays a crucial role in extending the quasi-coherent sheaves functor from derived affine schemes to derived prestacks,
see~\cite[\S\S 3.1.1 \& 3.2.1]{GaitsgoryRozenblyumStudy}.

Finally, the theory of fibrations can be used to define a simplicial version of the notion
of (relative)~\((\infty, 2)\)-operad, see~\cite{BataninMonoidal,BataninEckmannHilton}.
Possible applications in this direction come from symplectic geometry, and in particular
from the functoriality of the Fukaya category, see for instance~\cite{BottmanShacharAinfty2categories}.

\section*{Acknowledgements}
The first and third author gratefully acknowledge the support of Praemium Academiae of M.~Markl and RVO:67985840. The third author is also grateful to Rune Haugseng and Nick Rozenblyum for the fruitful conversations during his stay at MSRI. 

\section{Preliminaries}

In this section we establish notation and recall some preliminary definitions and results that will be used in the present paper. 

\subsection{Marked-simplicial sets and enriched categories}

	Recall that a marked simplicial set is a pair \((X, E_X)\) where \(X\) is a simplicial set and \(E_X\) is a collection of edges in \(X\), called the \ndef{marked} edges, containing all degenerate edges. A map of marked simplicial sets \(f\colon (X,E_X)\rightarrow (Y,E_Y)\) is a map of simplicial sets \(f\colon X \rightarrow Y\) satisfying \(f(E_X)\subseteq E_Y\). When denoting an explicit marked simplicial set we will often omit the reference to the degenerate edges. For example, we will write \((\Del^n,\Del^{\{0,1\}})\) for the marked simplicial set whose underlying simplicial set is the \(n\)-simplex and whose marked edges are all the degenerate edges together with the edge \(\Del^{\{0,1\}}\). The category of marked simplicial sets will be denoted by \(\s^+\). It is locally presentable and cartesian closed. For more background on marked simplicial sets we refer the reader to the comprehensive treatment in \cite{HTT}.

We will denote by \(\nCat{\s^+}\) the category of categories in enriched in \(\s^+\) with respect to the cartesian product on \(\s^+\), to which we will refer to as \emph{marked-simplicial categories}. For a marked-simplicial category \(\C\) and two objects \(x,y \in \C\) we will denote by \(\C(x,y) \in \s^+\) the associated mapping marked simplicial set. By an arrow \(e\colon x \to y\) in a marked-simplicial category \(\C\) we will simply mean a vertex \(e \in \C(x,y)_0\) in the corresponding mapping object. 

We will generally consider \(\nCat{\s^+}\) together with its associated \emph{Dwyer--Kan model structure} (see~\cite[\S A.3.2]{HTT}). In this model structure the weak equivalences are the Dwyer--Kan equivalences, that is, the maps which are essentially surjective on homotopy categories and induce marked categorical equivalences on mapping objects. The fibrant objects are the enriched categories \(\C\) whose mapping objects \(\C(x,y)\) are all fibrant, that is, \(\infty\)-categories marked by their equivalences. The model category \(\nCat{\s^+}\) is then a presentation of the theory of \((\infty,2)\)-categories, and is Quillen equivalent to other known models, see \S\ref{sec:scaled} below.

We say that \(\E\in \nCat{\s^+} \) is a \emph{\(\Catoo\)-category} if it is fibrant in the Dwyer--Kan model structure, i.e.~if it is enriched over \(\infty\)-categories, with marking given by equivalences. A \emph{fibration} of \(\Catoo\)-categories is a fibration between fibrant objects in the Dwyer--Kan model structure on \(\nCat{\s^+}\).

\subsection{Scaled simplicial sets and \pdfoo-bicategories}\label{sec:scaled}
 \begin{define}[\cite{LurieGoodwillie}]
 	A \emph{scaled simplicial set} is a pair \((X,T_X)\) where \(X\) is simplicial set and \(T_X\) is a subset of the set of triangles of \(X\), called \emph{thin triangles}, containing the degenerate ones. A map of scaled simplicial sets \(f\colon (X,T_X)\rightarrow (Y,T_Y)\) is a map of simplicial sets \(f\colon X \rightarrow Y\) satisfying \(f(T_X)\subseteq T_Y\). 
 \end{define}

We will denote by \(\Ss\) the category of scaled simplicial sets. It is locally presentable and cartesian closed. When denoting an explicit scaled simplicial set we will often omit the reference to the degenerate edges. For example, we will write \((\Del^n,\Del^{\{0,1,n\}})\) for the scaled simplicial set whose underlying simplicial set is the \(n\)-simplex and whose thin triangles are all the degenerate triangles together with the triangle \(\Del^{\{0,1,n\}}\).

 \begin{define}
 	\label{d:anodyne}
 	The set of \emph{generating scaled anodyne maps} \(\bS\) is the set of maps of scaled simplicial sets consisting of:
 	\begin{enumerate}
 		\item[(i)]\label{item:anodyne-inner} the inner horns inclusions
 		\[
 		 \bigl(\Lambda^n_i,\{\Delta^{\{i-1,i,i+1\}}\}_{|\Lambda^n_i}\bigr)\rightarrow \bigl(\Delta^n,\{\Delta^{\{i-1,i,i+1\}}\}\bigr)
 		 \quad , \quad n \geq 2 \quad , \quad 0 < i < n ;
 		\]
 		\item[(ii)]\label{i:saturation} the map 
 		\[
 		 (\Delta^4,T)\rightarrow (\Delta^4,T\cup \{\Delta^{\{0,3,4\}}, \ \Delta^{\{0,1,4\}}\}),
 		\]
 		where
 		\[
 		 T\overset{\text{def}}{=}\{\Delta^{\{0,2,4\}}, \ \Delta^{\{ 1,2,3\}}, \ \Delta^{\{0,1,3\}}, \ \Delta^{\{1,3,4\}}, \ \Delta^{\{0,1,2\}}\};
 		\]
 		\item[(iii)]\label{item:anodyne_outer} the set of maps
 		\[
 		\Bigl(\Lambda^n_0\coprod_{\Delta^{\{0,1\}}}\Delta^0,\{\Delta^{\{0,1,n\}}\}\Bigr)\rightarrow \Bigl(\Delta^n\coprod_{\Delta^{\{0,1\}}}\Delta^0,\{\Delta^{\{0,1,n\}}\}\Bigr)
 		\quad , \quad n\geq 3.
 		\]
 	\end{enumerate}
 	A general map of scaled simplicial set is said to be \emph{scaled anodyne} if it belongs to the weakly saturated closure of \(\bS\).
 \end{define}

\begin{define}\label{d:bicategory}
An \emph{\(\infty\)-bicategory} is a scaled simplicial set \(\C\) which admits extensions along all maps in \(\bS\). 
\end{define}

To avoid confusion we point out that scaled simplicial sets as in Definition~\ref{d:bicategory} are referred to in~\cite{LurieGoodwillie} as \emph{weak \(\infty\)-bicategories}, while the term \(\infty\)-bicategory was used to designate the stronger property of being fibrant in the \emph{bicategorical model structure} on \(\Ss\) constructed in loc.~cit., whose cofibrations are the monomorphisms, and which serves as a model for the theory of \((\infty,2)\)-categories. It is related to the model of marked-simplicial categories mentioned above via a Quillen equivalence
\[
\xymatrixcolsep{1pc}
\vcenter{\hbox{\xymatrix{
			**[l]\Ss \xtwocell[r]{}_{\rN^{\sca}}^{\fCs}{'\perp}& **[r] \nCat{\s^+}}}} ,
\]
in which the right functor \(\rN^{\sca}\) is called as the \emph{scaled coherent nerve}. 
Nonetheless, as we have shown in \cite{GagnaHarpazLanariEquiv}, the weak and strong notions of \(\infty\)-bicategory in fact coincide. In particular, the fibrant objects in the bicategorical model structure can be characterized by the extension property of Definition~\ref{d:bicategory}, and the notion of weak \(\infty\)-bicategory will not be further mentioned in the present paper. 

\begin{notate}
We will refer to 
 the weak equivalences in the bicategorical model structure as \emph{bicategorical weak equivalences}. Since all the objects in the bicategorical model structure are cofibrant, the left Quillen equivalence \(\fCs\) preserves and detects weak equivalences.
\end{notate}

\begin{notate}
    In a drawing, every 2-simplex filled by a 2-cell with the equivalence symbol, or simply filled by an equivalence symbol, such as in the triangles
    \[
        \begin{tikzcd}[column sep=tiny]
            \bullet \ar[rr, ""{swap, name=s}] \ar{rd}   && \bullet  \\
                & \bullet \ar[ru]   &
                \ar[Rightarrow, from=s, to=2-2, shorten >= 2mm, "\simeq"{pos=0.4}]
        \end{tikzcd}
        \quad\text{and}\quad
        \begin{tikzcd}[column sep=tiny]
            \bullet \ar[rr, ""{swap, name=s}] \ar{rd}   && \bullet  \\
                & \bullet \ar[ru]   &
                \ar[phantom, from=s, to=2-2, shorten >= 2mm, "\simeq"{description, pos=0.4}]
        \end{tikzcd},
    \]
    is a thin 2-simplex. As for 3-simplices, we will often draw then as
planarised tetrahedra
    \begin{nscenter}
            \begin{tikzpicture}[scale=1.5]
            \square{%
                /square/label/.cd,
                0={$0$}, 1={$1$}, 2={$2$}, 3={$3$},
                01={$01$}, 12={$12$}, 23={$23$}, 03={$03$}, 13={$13$}, 02={$02$}, 
                012={$012$}, 023={$023$}, 013={$013$}, 123={$123$}
            }
            \end{tikzpicture}
    \end{nscenter}
where an additional equivalence symbol can appear in some of the triangles to indicate their thinness.
\end{notate}

\begin{define}\label{def:whiskering}
Given a 3-simplex \(\rho\colon \Del^3 \to X\) of the form
		\begin{nscenter}
			\begin{tikzpicture}[scale=1.5]
			\square{%
				/square/label/.cd,
                0={$x_0$}, 1={$x_1$}, 2={$x_2$}, 3={$x_3$},
				01={$f$}, 12={}, 23={}, 03={}, 
				012={$\simeq$}, 023={$\sig'$}, 013={$\simeq$}, 123={$\sig$},
				0123={}
			}
			\end{tikzpicture}
		\end{nscenter}
we will say that \(\rho\) exhibits \(\sig'\) as the \ndef{left whiskering} of \(\sig\) by \(f\). Similarly, a 3-simplex \(\rho\colon \Del^3 \to X\) of the form  
\begin{nscenter}
			\begin{tikzpicture}[scale=1.5]
			\square{%
				/square/label/.cd,
                0={$x_0$}, 1={$x_1$}, 2={$x_2$}, 3={$x_3$},
				01={}, 12={}, 23={$f$}, 
				012={$\sig$}, 023={$\simeq$}, 013={$\sig'$}, 123={$\simeq$},
				0123={}
			}
			\end{tikzpicture}
		\end{nscenter}
will be said to exhibit \(\sig'\) as the \ndef{right whiskering} of \(\sig\) by \(f\).
\end{define}

 \begin{notate}\label{not:scaled_flat-sharp}
    Let \(X\) be a simplicial set. We will denote by \(X_{\flat} = (X, \deg_2(X))\)
    the scaled simplicial set where the
    thin triangles of \(X\) are the degenerate \(2\)-simplices and by \(X_{\sharp} = (X, X_2)\) the scaled simplicial
    set where all the triangles of \(X\) are thin.
    The assignments
    \[X \mapsto X_\flat\qquad\text{and}\qquad X \mapsto X_\sharp\]
    are left and right adjoint, respectively, to the forgetful functor
    \(\Ss \to \s\).
\end{notate}

\begin{define}
    \label{core defi}
    Given a scaled simplicial set \(X\), we define its \emph{core} to be the simplicial set \(X^{\thi}\) spanned by those \(n\)-simplices of \(X\) whose 2-dimensional faces are all thin triangles. The assignment \(X \mapsto X^{\thi}\) is then right adjoint to the functor \((-)_{\sharp}\colon \s \to \Ss\).
\end{define}

\begin{warning}
    In \cite[\href{https://kerodon.net/tag/01XA}{Tag 01XA}]{LurieKerodon}, Lurie uses the term \emph{pith} in place of core, and denotes it by \(\operatorname{Pith}(\operatorname{\mathcal{C}})\).
\end{warning}

 \begin{rem}
 If \(\C\) is an \(\infty\)-bicategory then its core \(\C^{\thi}\) is an \(\infty\)-category.
 \end{rem}

\begin{define}\label{d:equivalence}
	Let \(\C\) be an \(\infty\)-bicategory. We will say that an edge in \(\C\) is \ndef{invertible} if it is invertible when considered in the \(\infty\)-category \(\C^{\thi}\), that is, if its image in the homotopy category of \(\C^{\thi}\) is an isomorphism. We will sometimes refer to invertible edges in \(\C\) as \ndef{equivalences}. We will denote by \(\C^{\minisimeq} \subseteq \C^{\thi}\) the subsimplicial set spanned by the invertible edges. Then \(\C^{\minisimeq}\) is an \(\infty\)-groupoid (that is, a Kan complex), which we call the \emph{core groupoid} of \(\C\). It can be considered as the \(\infty\)-groupoid obtained from \(\C\) by discarding all non-invertible \(1\)-cells and \(2\)-cells. If \(X\) is an arbitrary scaled simplicial set then we will say that an edge in \(X\) is \ndef{invertible} if its image in \(\C\) is invertible for any bicategorical equivalence \(X \to \C\) such that \(\C\) is an \(\infty\)-bicategory. This does not depend on the choice of the \(\infty\)-bicategory replacement \(\C\).
\end{define}

\begin{notate}
	\label{n:mapping}
	Let \(\C\) be an \(\infty\)-bicategory and let \(x,y \in \C\) be two vertices.
	In~\cite[\S 4.2]{LurieGoodwillie}, Lurie gives an explicit
	model for the mapping \(\infty\)\nbd-cat\-egory  from \(x\) to \(y\) in \(\C\) that we now recall.
	Let \(\Hom_{\C}(x,y)\) be the marked simplicial set whose \(n\)-simplices are given by maps \(f\colon\Del^n \times \Del^1 \lrar \C\) such that \(f_{|\Del^n \times \{0\}}\) is constant on \(x\), \(f_{|\Del^n \times \{1\}}\) is constant on \(y\), and the triangle \(f_{|\Del^{\{(i,0),(i,1),(j,1)\}}}\) is thin 
	for every \(0 \leq i\leq j \leq n\). An edge \(f\colon\Del^1 \times \Del^1 \lrar \C\) of \(\Hom_{\C}(x,y)\) is marked exactly when the triangle \(f_{|\Del^{\{(0,0),(1,0),(1,1)\}}}\) is thin. 
	The assumption that \(\C\) is an \(\infty\)-bicategory implies that 
	the marked simplicial set \(\Hom_{\C}(x,y)\) is \ndef{fibrant} in the marked categorical model structure, that is, it is an \(\infty\)-category whose marked edges are exactly the equivalences.	
\end{notate}

\begin{define}
We will denote by \(\Catoo\) the scaled coherent nerve of the (fibrant) marked-simplicial subcategory \((\s^+)^{\circ} \subseteq \s^+\) spanned by the fibrant marked simplicial sets. We will refer to \(\Catoo\) as the \emph{\(\infty\)-bicategory of \(\infty\)-categories.}
\end{define}

 \begin{define}
 We define \(\BiCat\) to be the scaled coherent nerve of the (large)
 marked-simplicial category \(\mathrm{BiCat}_{\Del}\) whose objects are the \(\infty\)-bicategories
 and whose mapping marked simplicial set, for \(\C,\D \in \mathrm{BiCat}_{\Del}\),
 is given by \(\mathrm{BiCat}_{\Del}(\C,\D) := \Fun^{\thi}(\C,\D)^{\natural}\).
 Here \(\Fun^{\thi}(\C,\D)\) is the core \(\infty\)-category of the internal hom scaled simplicial set \(\Fun(\C, \D)\),
 which happens to be an \(\infty\)-bicategory whenever \(\D\) is so (see \cite[Proposition 3.1.8 and Lemma 4.2.6]{LurieGoodwillie}),
 and by \((-)^{\natural}\) we mean that the associated marked simplicial set in which the marked arrows are the equivalences.
 We will refer to \(\BiCat\) as the \emph{\(\infty\)-bicategory of \(\infty\)-bicategories.}
 \end{define}

 Since the scaled coherent nerve functor \(\Nsc\) is a right Quillen equivalence, it determines an equivalence 
 \begin{equation}\label{e:enriched-cat-model}
 (\nCat{\s^+})_{\infty} \xrightarrow{\simeq} \BiCat^{\thi}
 \end{equation}
 between the \(\infty\)-category associated to the model category \(\nCat{\s^+}\) and the core \(\infty\)-category of \(\BiCat\).
In the model \(\nCat{\s^+}\) the \((\ZZ/2)^2\)-action on the theory of \((\infty,2)\)-categories can be realized by an action of \((\ZZ/2)^2\) on \(\nCat{\s^+}\) via model category isomorphisms: the operation \(\C \mapsto \C^{\op}\) which inverts only the direction of 1-morphisms is realized by setting \(\C^{\op}(x,y) = \C(y,x)\), while the operation \(\C \mapsto \C^{\co}\) of inverting only the direction of 2-morphisms is realized by setting \(\C^{\co}(x,y) = \C(x,y)^{\op}\), where the right hand side denotes the operation of taking opposites in marked simplicial sets. 
 Through the equivalence~\eqref{e:enriched-cat-model} 
 these two involutions induce a \((\ZZ/2)^{2}\)-action on the core \(\infty\)-category \(\BiCat^{\thi}\), which we then denote by the same notation. In particular, we have involutions 
 \[(-)^{\op}\colon \BiCat^{\thi} \to \BiCat^{\thi} \quad\text{and}\quad (-)^{\co}\colon \BiCat^{\thi} \to \BiCat^{\thi},\] 
 the first inverting the direction of 1-morphisms and the second the direction of 2-morphisms. 

 \begin{rem}\label{r:twisted-op}
 The \((\ZZ/2)^2\)-action on \(\BiCat^{\thi}\) does not extend to an action of \((\ZZ/2)^2\) on the \(\infty\)-bicategory \(\BiCat\). Instead, 
 it extends to a \emph{twisted} action, that is, \((-)^{\co}\) and \((-)^{\op}\) extend to equivalences of the form: 
 \[(-)^{\co}\colon \BiCat \xrightarrow{\simeq} \BiCat\] 
and 
 \[(-)^{\op}\colon \BiCat \xrightarrow{\simeq} \BiCat^{\co}.\]
 \end{rem}

\subsection{Join and slice}\label{sec:join-and-slice}

In~\cite[\S 2.2]{GagnaHarpazLanariEquiv} and~\cite[\S 2.1]{GagnaHarpazLanariLaxLimits} we used join and slice constructions in the setting of marked-scaled simplicial sets, that is, simplicial sets \(X\) endowed both with a distinguished collection \(E_X \subseteq X_1\) of marked edges and a distinguished collection \(T_X \subseteq X_2\) of thin triangles.
The category of marked-scaled simplicial sets will be denoted by \(\Sms\). Though we will need only a limited amount of the generality used in loc.~cit., let us recall the construction and terminology used there for the sake of consistency.  Given two marked-scaled simplicial sets \((X,E_X,T_X),(Y,E_Y,T_Y)\), their join is the \emph{scaled} simplicial set \((X * Y,T_{X * Y})\) whose underlying simplicial set \(X * Y\) is the usual join of simplicial sets
and the collection of thin triangles is
\[ T_{X * Y} := [T_X \times Y_0] \cup [E_X \times E_Y] \cup [X_0 \times T_Y] \subseteq [X_2 \times Y_0] \cup [X_1 \times Y_1] \cup [X_0 \times Y_2] = (X * Y)_2 .\]
For a fixed marked-scaled simplicial set \((Y,E_Y,T_Y)\) the functor \((X,E_X,T_X) \mapsto (X * Y,E_{X*Y})\) is a colimit preserving functor from marked-scaled simplicial sets to scaled simplicial sets under \((Y,T_Y)\), and admits a right adjoint, that is, an associated slice construction. Given a marked-scaled simplicial set \((K,E_K,T_K)\) and a map of scaled simplicial set \(p\colon (K,T_K) \to (Z,T_Z)\), we will denote by 
\((Z,T_Z)_{/p}\), or simply \(Z_{/p}\) for brevity, the valued of this right adjoint at \(p\). In particular, \(Z_{/p}\) is the marked-scaled simplicial set characterized by the property that maps of marked-scaled simplicial sets \((X,E_X,T_X) \to Z_{/p}\) correspond to maps of scaled simplicial sets \((X*K,T_{X*K}) \to (Z,T_Z)\) under \((K,E_K,T_K)\). We then write \(\ovl{Z}_{/p}\) for the \emph{underlying scaled simplicial set} of \(Z_{/p}\), obtained by forgetting the marking. To avoid confusion, let us emphasize that the thin triangles in \(\ovl{Z}_{/p}\) do depend on the marking \(E_K\) of \(K\), and not only on the map of scaled simplicial sets \(p\). Similarly, we denote by \(Z_{p/}\) the marked-scaled simplicial set representing the functor
\[
	(X,E_X,T_X) \mapsto \Map_{(\Ss)_{(K,T_K)/}}((K*X,T_{K*X}),T_{X*K},(Z,T_Z)),
\]
and by \(\ovl{Z}_{p/}\) its underlying scaled simplicial set. 

We will mostly be interested in the case where the target \(Z\) is an \(\infty\)-bicategory \(\C\), and \(K\) is either \(\Del^0\) or \(\prescript{\sharp}{}{\Delta^1}\), the latter being the 1-simplex endowed with the maximal marking and the (unique) trivial scaling. In the latter case we will sometimes make use of~\cite[Notation 2.3.4]{GagnaHarpazLanariLaxLimits} which we now recall for the convenience of the reader.

\begin{notate}\label{not:slice-over-arrow}
	Given a scaled simplicial set \((Z,T_Z)\) and an edge \(e\colon x \to y\) in \(Z\), we will denote by \(\trbis{Z}{e^{\sharp}}\in \Sms\) the result of the slice construction above applied to the marked-scaled simplicial set \(\prescript{\sharp}{}{\Delta^1}\) and the map of scaled simplicial sets \(\Del^1_{\flat} \to (Z,T_Z)\) determined by \(e\).
	Explicitly, the set of \(n\)-simplices in \(\trbis{Z}{e^{\sharp}}\) is given by:
	\[
	 (\trbis{Z}{e^{\sharp}})_n\stackrel{\text{def}}{=} \bigl\{\alpha\colon \prescript{\flat}{}{\Delta^n} \ast \prescript{\sharp}{}{\Delta^1} \to Z \ \vert \ \alpha_{\vert \Delta^{\{n+1,n+2\}}}=e \bigr\},
	\] 
	where here \(\prescript{\flat}{}{\Delta^n}\) denotes the \(n\)-simplex with minimal marking and minimal scaling.
	The marked edges of \(\trbis{Z}{e^{\sharp}}\) are those which factor through 
	\(\prescript{\sharp}{}{\Delta^1} \ast \prescript{\sharp}{}{\Delta^1}\)
	and the thin triangles are those which factor through 
	\((\Delta^2)^{\flat}_{\sharp} \ast \prescript{\sharp}{}{\Delta^1}\), where \((\Delta^2)^{\flat}_{\sharp}\) is the \(2\)-simplex
	with minimal marking and maximal scaling, that is, its unique non-degenerate triangle is thin. %the non-trivial 2-dimensional simplex 
	As above, we will write \(\ovl{Z}_{/e^{\sharp}}\) for the underlying scaled simplicial set of \(Z_{/e^{\sharp}}\).
\end{notate}

When \(K=\Del^0\) the map \(p\) corresponds to a vertex \(y \in \C\), and we will denote the associated marked-scaled slice by \(\C_{/y}\). The fibre \((\C_{/y})_x\) of the projection \(\C_{/y} \to \C\) over a vertex \(x\) of \(\C\) is then a marked-scaled simplicial set all of whose triangles are thin. Its underlying marked simplicial set, denoted by \(\Maptr_{\C}(x,y)\) in~\cite[\S 2.3]{GagnaHarpazLanariGrayLaxFunctors}, is fibrant in the marked categorical model structure, so that one can treat it as an \(\infty\)-category (marked by its equivalences). As such, it is a model
for the mapping \(\infty\)-category of \(\C\), see~\cite[Proposition~2.23]{GagnaHarpazLanariEquiv}. We then write \(\uMaptr_{\C}(x,y)\) for the underlying simplicial set of \(\Maptr_{\C}(x,y)\).

\subsection{1-Inner/outer (co)cartesian fibrations}
\label{s:fibrations}

The theory of inner and outer (co)cartesian fibrations
of \(\infty\)-bicategories was developed in~\cite{GagnaHarpazLanariLaxLimits} as an analogue of the usual notion of (co)cartesian fibrations of \(\infty\)-categories. As in the latter case, such a fibration encodes the data of a family of \(\infty\)-categories functorially parametrized by the base \(\B\), only that in the \(\infty\)-bicategorical setting there are four different variance flavours for this functorial dependence one can consider. Specifically, inner (resp.~outer) cocartesian fibrations encode a covariant dependence on the 1-morphisms of \(\B\) and a covariant (resp.~contravariant) dependence on the level of 2-morphisms. Similarly, inner (resp.~outer) cartesian fibrations encode a contravariant dependence on the 1-morphisms of \(\B\) and a contravariant (resp.~covariant) dependence on the level of 2-morphisms.  

Below we recall the main definitions. We refer the reader to loc.~cit.\ for a comprehensive treatment.

 \begin{define}\label{d:weak}
 	We will say that a map of scaled simplicial sets \(X \rightarrow Y\) is a \ndef{weak fibration} if it has the right lifting property with respect to the following types of maps:
 	\begin{enumerate}
 		\item
 		All scaled inner horn inclusions of the form 
 		\[ (\Lam^n_i,\{\Del^{\{i-1,i,i+1\}}\}_{|\Lam^n_i}) \subseteq (\Del^n,\{\Del^{\{i-1,i,i+1\}}\}) \] 
 		for \(n \geq 2\) and \(0 < i < n\).
 		\item
 		The scaled horn inclusions of the form: \[\Bigl(\Lam^n_0 \plus{\Del^{\{0,1\}}}\Del^0,\{\Del^{\{0,1,n\}}\}_{|\Lam^n_0}\Bigr) \subseteq \Bigl(\Del^n\plus{\Del^{\{0,1\}}}\Del^0,\{\Del^{\{0,1,n\}}\}\Bigr)\] for \(n \geq 2\).
 		\item
 		The scaled horn inclusions of the form: \[\Bigl(\Lam^n_n \plus{\Del^{\{n-1,n\}}}\Del^0,\{\Del^{\{0,n-1,n\}}\}_{|\Lam^n_n}\Bigr) \subseteq \Bigl(\Del^n\plus{\Del^{\{n-1,n\}}}\Del^0,\{\Del^{\{0,n-1,n\}}\}\Bigr)\] for \(n \geq 2\).
 	\end{enumerate}
\end{define}

\begin{rem}\label{rem:bicategorical-is-weak}
The maps appearing in Definition~\ref{d:weak} are all trivial cofibrations with respect to the bicategorical model structure. This means that any bicategorical fibration \(\E \to \B\) is in particular a weak fibration. For example, if \(\E\) is an \(\infty\)-bicategory then the terminal map \(\E \to \Del^0\) is a weak fibration.
\end{rem}

\begin{define}\label{def:inner-outer}
Given a weak fibration \(f\colon X \rightarrow Y\), we will say that \(f\) is 
 \begin{itemize}
 \item a \ndef{1-inner fibration} if it detects thin triangles and the underlying map of simplicial sets is an inner fibration, that is, satisfies the right lifting property with respect to inner horn inclusions;
 \item a \ndef{1-outer fibration} if it detects thin triangles and  
 the underlying map of simplicial sets 
 satisfies the right lifting property with respect to the inclusions 
 \[\Lam^n_0 \coprod_{\Del^{\{0,1\}}}\Del^0 \subseteq \Del^n\coprod_{\Del^{\{0,1\}}}\Del^0 \quad \text{and} \quad
 \Lam^n_n \coprod_{\Del^{\{n-1,n\}}}\Del^0 \subseteq \Del^n\coprod_{\Del^{\{n-1,n\}}}\Del^0\]
 for \(n \geq 2\). 
 \end{itemize}

 Note that the collection of 1-inner fibrations is closed under the \((-)^{\op}\) duality, and the same holds for the collection of 1-outer fibrations.
 \end{define}

\begin{rem}
In~\cite{GagnaHarpazLanariEquiv} and~\cite{GagnaHarpazLanariLaxLimits} we used the terms inner and outer fibrations for what we called above 1-inner and 1-outer fibrations, respectively. The reason for the terminology update is the desired to more clearly distinguish between these notions and those of 2-inner and 2-outer fibrations introduced in the present paper. In principle, 1-inner and 1-outer fibrations between \(\infty\)-bicategories are the functors which induce right and left fibrations, respectively, on the level of mapping \(\infty\)-categories. The notions of 2-inner and 2-outer fibrations between \(\infty\)-bicategories correspond in turn to functors which induce cartesian and cocartesian fibrations on the level of mapping \(\infty\)-categories, together with the condition that composition of arrows preserves cocartesian edges (see \S\ref{sec:car-fib-enriched}). 
\end{rem}

 \begin{rem}
 	In \cite[\href{https://kerodon.net/tag/01WF}{Tag 01WF}]{LurieKerodon}, Lurie uses the term \emph{interior fibration} to encode what we just defined as 1-outer fibrations. Our choice in~\cite{GagnaHarpazLanariLaxLimits} (which already appeared in \cite{GagnaHarpazLanariEquiv}) is motivated by the intent of highlighting that \emph{special outer horns} admit fillers against such maps.
 \end{rem}

 \begin{define}\label{d:cartesian}
 	Let \(p\colon X \rightarrow Y\) be a weak fibration. We will say that an edge \(e\colon \Del^1 \rightarrow X\) is \ndef{\(p\)-cartesian} if the dotted lift exists in any diagram of the form
 	\[ \xymatrix{
 		(\Lam^n_n,\{\Del^{\{0,n-1,n\}}\}_{|\Lam^n_n}) \ar^-{\sig}[r]\ar[d] & (X,T_X) \ar^p[d] \\
 		(\Del^n,\{\Del^{\{0,n-1,n\}}\}) \ar@{-->}[ur]\ar[r] & (Y,T_Y) \\
 	}\]
 	with \(n \geq 2\) and \(\sig_{|\Del^{\{n-1,n\}}} = e\). We will say that \(e\) is \ndef{\(p\)-cocartesian} if \(e^{\op}\colon \Del^1 \to X^{\op}\) is \(p^{\op}\)-cartesian.

    As in~\cite[Definition 2.3.1]{GagnaHarpazLanariLaxLimits}, we will also say that the edge \(e \colon \Delta^1 \to X\) is \ndef{strongly \(f\)-(co)cartesian} if
    it is a (co)cartesian edge with respect to the underlying map of simplicial sets. 
 \end{define}

\begin{rem}\label{rem:equiv-is-car}
If \(p\colon \E \to \B\) is weak fibration between \(\infty\)-bicategories, then any equivalence in \(\E\) is both \(p\)-cartesian and \(p\)-cocartesian, see~\cite[Corollary 2.3.10]{GagnaHarpazLanariLaxLimits}. On the other hand, if \(e\colon x \to y\) is either a \(p\)-cartesian or \(p\)-cocartesian edge in \(\E\) such that \(pe\) is an equivalence in \(\B\) then \(e\) is an equivalence. To see this, let \(g\colon y \to x\) be an inverse to \(pe\) in \(\B\), equipped with thin triangles of the form
    \[
        \begin{tikzcd}
            px \ar[rr, equal, ""{swap, name=s}] \ar[rd,"pe"']   && px  \\
                & y \ar[ru, "g"']   &
                \ar[phantom, from=s, to=2-2, shorten >= 2mm, "\simeq"{pos=0.4}]
        \end{tikzcd}
        \quad\text{and}\quad
        \begin{tikzcd}
            py \ar[rr, equal, ""{swap, name=s}] \ar[rd,"g"']   && py  \\
              &  px \ar[ru,"pe"'] &   \ .
                \ar[phantom, from=s, to=2-2, shorten >= 2mm, "\simeq"{description, pos=0.4}]
        \end{tikzcd}
    \]
If \(e\) is \(p\)-cartesian then we can lift the right hand side triangle to a triangle in \(\E\) of the form
\[         
		\begin{tikzcd}
            y \ar[rr, equal, ""{swap, name=s}] \ar[rd,"g'"']   && y  \\
              &  x \ar[ru,"e"'] &   \ ,
                \ar[phantom, from=s, to=2-2, shorten >= 2mm, "\simeq"{description, pos=0.4}]
        \end{tikzcd}
\]
producing in particular a right inverse \(g'\) to \(e\). But then \(g'\) is also \(p\)-cartesian by~\cite[Lemma 2.3.8]{GagnaHarpazLanariLaxLimits}, and since \(pg'= g\) is invertible in \(\B\) we deduce from the same argument that \(g'\) also has a right inverse in \(\E\). It then follows from standard arguments (which can be applied on the level of the core \(\infty\)-category \(\C^{\thi}\)) that \(e\) and \(g'\) are homotopy inverses, and in particular \(e\) is an equivalence. If \(e\) is assumed instead to be \(p\)-cocartesian then one proceeds in the same manner by lifting the left hand side triangle to \(\E\). 
\end{rem}

\begin{rem}\label{rem:car-closed-equiv}
If \(p\colon \E \to \B\) is a weak fibration and \(e,e'\colon \Del^1 \to \E\) are two arrows which are equivalent in \(\Fun(\Del^1,\E)\) then \(e\) is \(p\)-cartesian if and only if \(e'\) is. To see this, note that in this case one has both an equivalence going from \(e\) to \(e'\) and an equivalence going from \(e'\) to \(e\), and so it will hence suffice to show that if we have an equivalence \(e \Rightarrow e'\) and \(e'\) is \(p\)-cartesian then \(e\) is \(p\)-cartesian. Indeed, such an equivalence is given by a map \(H\colon \Del^1_{\flat} \times \Del^1_{\flat} \to \E\) such that \(H|_{\Del^1_{\flat} \times \Del^{\{0\}}} = e\) and \(H|_{\Del^1_{\flat} \times \Del^{\{1\}}} = e'\). Since \(p\)-cartesian edges are closed under composition (\cite[Lemma 2.3.9]{GagnaHarpazLanariLaxLimits}) and every equivalence is \(p\)-cartesian (Remark~\ref{rem:equiv-is-car}) we get that \(H\) sends the diagonal edge \(\Del^{\{(0,0),(1,1)\}} \subseteq \Del^1 \times \Del^1\) to a \(p\)-cartesian edge. Then, from the partial 2-out-of-3 property for \(p\)-cartesian edges of~\cite[Lemma 2.3.8]{GagnaHarpazLanariLaxLimits} we get that \(e\) is \(p\)-cartesian. 

Passing to opposites, we also obtain from this argument that \(e\) is \(p\)-cocartesian if and only if \(e'\) is. In addition, if \(p\) is a 1-outer fibration then by~\cite[Proposition 2.3.7]{GagnaHarpazLanariLaxLimits} the collection of strongly \(p\)-(co)cartesian arrows coincides with that of \(p\)-(co)cartesian arrows, and hence \(e\) is strongly \(p\)-(co)cartesian if and only if \(e'\) is so (alternatively, when \(p\) is a 1-outer fibration the statements of~\cite[Lemma 2.3.8 and Lemma 2.3.9]{GagnaHarpazLanariLaxLimits} also apply to strongly \(p\)-(co)cartesian arrows, so that the above argument can simply be carried out verbatim).
\end{rem}

 \begin{define}\label{d:car-fibration}
 	Let \(f\colon X \rightarrow Y\) be a weak fibration of scaled simplicial sets.
 	We will say that \(f\) is a \ndef{cartesian fibration} if for every \(x \in X\) and an edge \(e\colon y \rightarrow f(x)\) in \(Y\) there exists a \(f\)-cartesian edge \(\wtl{e}\colon \wtl{y} \to x\) such that \(f(\wtl{e}) = e\).
 	Dually, we will say that \(f\colon X \rightarrow Y\) is a \ndef{cocartesian fibration} if \(f^{\op}\colon X^{\op} \rightarrow Y^{\op}\) is a cartesian fibration.
 \end{define}

 \begin{define}\label{d:car-fibration-2}
 	Let \(f\colon X \rightarrow Y\) be a weak fibration of scaled simplicial sets. We will say that \(f\) is a 1-inner (resp.~1-outer) cartesian fibration if it is both a 1-inner (resp.~1-outer) fibration and a cartesian fibration. Dually, we will say that \(f\) is a 1-inner (resp.~1-outer) cocartesian fibration if \(p^{\op}\) is a 1-inner
(resp.~1-outer) cartesian fibration. 
 \end{define}

 \begin{rem}\label{r:base-change}
 	The classes of weak fibrations, 1-inner/outer fibrations and 1-inner/outer (co)cartesian fibrations are all closed under base change.
 \end{rem}

\begin{example}\label{ex:slice-projection}
If \(\C\) is an \(\infty\)-bicategory then the projection \(\ovl{\C}_{/x} \to \C\) is an example of a 1-outer cartesian fibration, where the cartesian edges are exactly those whose corresponding triangle in \(\C\) is thin (equivalently, those corresponding to marked edges in \(\C_{/x}\)), 
see~\cite[Corollary 2.4.7]{GagnaHarpazLanariLaxLimits}. Similarly, \(\ovl{\C}_{x/} \to \C\) is a 1-outer cocartesian fibration, with the cocartesian edges again corresponding to thin triangles. More generally, for any map of scaled simplicial sets \(p \colon (K,T_K) \to \C\), the associated slice projections 
\(\ovl{\C}_{/p} \to \ovl{\C}\) and \(\ovl{\C}_{p/}\to \C\) are 1-outer cartesian and cocartesian fibrations, respectively.
\end{example}

\begin{prop}
	\label{cart are fib}
	Any (co)cartesian fibration between \(\infty\)-bicategories is a fibration in the bicategorical model structure on \(\Ss\).
\end{prop}
\begin{proof}
We prove the cartesian case, from which the cocartesian case can be deduced by passing to opposites.
Let \(p\colon \E \to \B\) be a cartesian fibration between \(\infty\)-bicategories. To show that \(p\) is a bicategorical fibration we need to produce the dotted lift in any square of the form
\begin{equation}\label{eq:is-fibrant} \xymatrix{
K \ar[r]^{f}\ar[d] & \E \ar[d] \\
L \ar[r]^{g}\ar@{..>}[ur] & \B 
}
\end{equation}
such that \(K \to L\) is a bicateogrical trivial cofibration of scaled simplicial sets.
Since \(\E\) is an \(\infty\)-bicategory it is in particular fibrant in the bicategorical model structure (see discussion in \S\ref{sec:scaled}), and hence we can extend \(f\) to a map \(h\colon L \to \E\). This is not yet a solution to the above lifting problem since the composite \(ph\) might be different from \(g\). The two maps \(ph\) and \(g\) agree however on \(K\) by construction. Since the bicategorical model structure is cartesian closed and \(\B\) is fibrant we may solve the lifting problem
\[ \xymatrix{
[K \times \Del^1_{\flat}] \coprod_{K \times \partial\Del^1} [L \times \partial\Del^1] \ar[r]\ar[d] & \B \\
L \times \Del^1_{\flat} \ar@{..>}[ur] & 
}\]
yielding a natural transformation \(H\colon L \times \Del^1_{\flat} \to \B\) from \(g\) and \(ph\) which is constant on \(K\). Since \(K \to L\) is a trivial cofibration and \(\B\) is fibrant the induced functor \(\Fun(L,\B) \to \Fun(K,\B)\) is an equivalence of \(\infty\)-bicategories, and since the arrow in \(\Fun(L,\B)\) associated to \(H\) maps to an identity arrow in \(\Fun(K,\B)\) we deduce that it must be invertible in \(\Fun(L,\B)\). In particular, the restriction of \(H\) to \(\{l\} \times \Del^1_{\flat}\) is an invertible arrow of \(\B\) for every vertex \(l \in L\). Now since \(p\) is a cartesian fibration we can choose for each \(l \in L_0 \setminus K_0\) a \(p\)-cartesian lift \(e_l\colon \{l\} \times \Del^1_{\flat} \to \E\) of \(H|_{\{l\} \times \Del^1_{\flat}}\). Then each \(e_l\) is a \(p\)-cartesian lift of an equivalence, and is hence itself an equivalence, see Remark~\ref{rem:equiv-is-car}. Let \(L'\subseteq L\) be the scaled simplicial subset whose underlying simplicial set is that of \(L\) and whose thin triangles are only those thin triangles which are contained in \(K\). Applying~\cite[Proposition 2.38]{GagnaHarpazLanariEquiv} we may solve the lifting problem
\[ \xymatrix{
[K \times \Del^1_{\flat}] \coprod_{K \times \Del^{\{1\}}} [L' \times \Del^{\{1\}}] \ar[r]\ar[d] & \E\ar[d] \\
L' \times \Del^1_{\flat} \ar@{..>}[ur]^{G}\ar[r]^{H} & \B
}\]
to yield a natural transformation \(G\colon L' \times \Del^1_{\flat} \to \E\) such that \(G|_{\{l\} \times \Del^1_{\flat}} = e_l\) for every \(l \in L_0 \setminus K_0\), where we point out that the assumption made in~\cite[Proposition 2.38]{GagnaHarpazLanariEquiv} that \(p\) detects thin triangles is not needed since \(L'\) does not contain any thin triangles that are not in \(K\), see~\cite[Remark 2.40]{GagnaHarpazLanariEquiv}. In particular, \(G\) is a levelwise invertible natural transformation. We wish to show that \(G|_{L'\times \Del^{\{0\}}}\) extends to \(L \times \Del^{\{0\}}\), thus providing a solution to the original lifting problem~\eqref{eq:is-fibrant}. Indeed, if \(\sig\colon \Del^2_{\flat} \to L'\) is a triangle that is thin in \(L\) then the composed map 
\[H_{\sig}\colon \Del^2_{\flat} \times \Del^1_{\flat} \xrightarrow{\sig \times \id} L'\times \Del^1_{\flat} \xrightarrow{G} \E\]
is a levelwise invertible natural transformation between two triangles, one of which is thin (since \(G|_{L' \times \Del^{\{1\}}}\) extends to \(L \times \Del^1_{\flat}\) by construction), and hence the other one is thin as well by~\cite[Corollary 3.5]{GagnaHarpazLanariEquiv}.
\end{proof}

Over a fixed base \(\B\), the collection of 1-inner cocartesian fibrations, and cocartesian edges preserving functors between them, can be organized into an \(\infty\)-bicategory \(\coCar^{\inn}(\B)\). This \(\infty\)-bicategory can be presented by a suitable model structure on the category of marked simplicial sets over the underlying simplicial set of \(\B\), developed in~\cite[\S 3.2]{LurieGoodwillie} using the machinery of categorical patterns. Lurie then constructs in loc.~cit.\ a straightening-unstraightening Quillen equivalence between this model structure and the projective model structure on \(\Fun(\fCs(\B),\s^+)\). In \cite{GagnaHarpazLanariLaxLimits} we used this to establish the following \(\infty\)-bicategorical form of the Grothendieck--Lurie correspondence, for both cartesian and cocartesian, inner and outer flavours of fibrations:

\begin{thm}[{\cite[Corollary~3.3.3]{GagnaHarpazLanariLaxLimits}}]
	\label{c:S-U-for-outer-fibs}
For an \(\infty\)-bicategory \(\B \in \BiCat\) there are natural equivalences of \(\infty\)-bicategories
\begin{align*}
    \coCar^{\inn}(\B) &\simeq \Fun(\B,\Catoo), \\
    \coCar^{\out}(\B) &\simeq \Fun(\B^{\co},\Catoo), \\
    \Car^{\inn}(\B) &\simeq \Fun(\B^{\coop},\Catoo), \\
    \Car^{\out}(\B) &\simeq \Fun(\B^{\op},\Catoo).
\end{align*}
\end{thm}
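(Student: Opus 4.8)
The plan is to reduce the statement to the single case of inner cocartesian fibrations --- where the analytic input enters through Lurie's straightening--unstraightening Quillen equivalence --- and then to propagate the result to the other three variances by means of the \((\ZZ/2)^2\)-symmetry of the theory of \(\infty\)-bicategories. For the first equivalence \(\coCar^{\inn}(\B)\simeq\Fun(\B,\Catoo)\), recall (as indicated above) that \(\coCar^{\inn}(\B)\) is presented by the categorical-patterns model structure on marked simplicial sets over the underlying marked simplicial set of \(\B\), that \(\Fun(\B,\Catoo)\) is presented by the projective model structure on \(\Fun(\fCs\B,\s^+)\), and that Lurie constructs between the two a Quillen equivalence \(\mathrm{St}_{\B}\dashv\mathrm{Un}_{\B}\), natural in \(\B\). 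Quoting this already yields an equivalence of the underlying \(\infty\)-categories, natural in \(\B\).

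The substantive step is then to upgrade this to an equivalence of \(\infty\)-bicategories. Both model categories in play are enriched over \(\s^+\) (by the relative mapping objects, respectively by an end), and in each case the \(\infty\)-bicategory at hand is the scaled coherent nerve of the associated marked-simplicial category of bifibrant objects; hence it suffices to show that the straightening--unstraightening adjunction refines to an \(\s^+\)-enriched Quillen adjunction. Concretely, this amounts to checking that \(\mathrm{Un}_{\B}\) commutes with cotensoring by marked simplicial sets, which then forces it to identify the mapping \(\infty\)-categories on the two sides --- those recording cocartesian-edge-preserving functors together with their vertical natural transformations on one side, and natural transformations together with their modifications on the other. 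Granting this, applying \(\Nsc\) to the resulting Dwyer--Kan equivalence produces the equivalence of \(\infty\)-bicategories, again natural in \(\B\).

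For the remaining three cases I would transport the inner cocartesian equivalence along the two generating symmetries. The involution \(\C\mapsto\C^{\op}\) is already strict on scaled simplicial sets and preserves the bicategorical model structure; inspecting Definitions~\ref{d:weak}--\ref{d:car-fibration} shows that it carries inner/outer (co)cartesian fibrations over \(\B\) to inner/outer (co)cartesian fibrations over \(\B^{\op}\), interchanging ``cartesian'' with ``cocartesian'' and fixing ``inner''/``outer''. The involution \(\C\mapsto\C^{\co}\) is strict on \(\nCat{\s^+}\) and corresponds through \(\Nsc\) to the twisted involution of \(\BiCat\) of Remark~\ref{r:twisted-op}, under which the four classes of fibrations are permuted, ``inner'' being interchanged with ``outer''. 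Conjugating the equivalence \(\coCar^{\inn}(\B)\simeq\Fun(\B,\Catoo)\) by these operations, and using that \((-)^{\co}\) is cartesian-monoidal --- hence preserves internal homs --- together with the self-duality \(\Catoo^{\co}\simeq\Catoo\) furnished by \(\C\mapsto\C^{\op}\) on objects, one obtains after relabelling the base the equivalences \(\coCar^{\out}(\B)\simeq\Fun(\B^{\co},\Catoo)\), \(\Car^{\inn}(\B)\simeq\Fun(\B^{\coop},\Catoo)\) and \(\Car^{\out}(\B)\simeq\Fun(\B^{\op},\Catoo)\). The one fiddly point is to track precisely which duality of the target \(\Fun(-,\Catoo)\) is produced by which symmetry of the fibration side, so that the bases emerge exactly as stated; I would settle this by chasing a single cocartesian and a single cartesian \(1\)-morphism, and a \(2\)-morphism, through the constructions.

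The step I expect to be the main obstacle is the \((\infty,2)\)-upgrade of the inner cocartesian case: Lurie's theorem is a Quillen equivalence and so, a priori, only identifies the underlying \(\infty\)-categories, whereas the assertion concerns \(\infty\)-bicategories, so one genuinely has to control the \(2\)-morphisms on both sides. By contrast, the reduction via the \((\ZZ/2)^2\)-symmetry and the naturality in \(\B\) are routine, if somewhat lengthy, bookkeeping once the inner cocartesian equivalence of \(\infty\)-bicategories is in hand.
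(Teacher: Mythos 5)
The paper does not prove this statement here at all: it is quoted verbatim from \cite[Corollary~3.3.3]{GagnaHarpazLanariLaxLimits}, with the surrounding text and the introduction indicating that the proof there proceeds exactly as you propose --- establish the inner cocartesian case from Lurie's categorical-patterns straightening--unstraightening Quillen equivalence, and transport it to the other three variances via the \((\ZZ/2)^2\)-symmetry, using the comparison with marked-simplicial categories to realize \((-)^{\co}\) strictly. Your proposal is therefore essentially the same approach as the cited proof, and you correctly identify both the genuine difficulty (upgrading the Quillen equivalence of underlying \(\infty\)-categories to an equivalence of \(\infty\)-bicategories) and the bookkeeping subtlety coming from the twisted action of Remark~\ref{r:twisted-op}.
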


\section{2-Inner/outer cartesian fibrations}

In this section we will define the principal notion of this paper, namely that of 2-inner/outer (co)cartesian fibrations, and study their basic properties.

\subsection{Inner and outer triangles}

\begin{define}\label{def:inner-triangle}
Let \(p\colon \E \to \B\) be a weak fibration of \(\infty\)-bicategories and \(\sig\colon \Del^2 \to \E\) a triangle.
\begin{itemize}
\item
We will say that \(\sig\) is \emph{left \(p\)-inner} if the corresponding arrow in \(\ovl{\E}_{/\sig(2)}\) is strongly cartesian with respect to the projection \(\ovl{\E}_{/\sig(2)} \to \E \times_{\B} \ovl{\B}_{/p\sig(2)}\). 
\item
We will say that \(\sig\) is \emph{right \(p\)-inner} if the corresponding arrow in \(\ovl{\E}_{\sig(0)/}\) is strongly cocartesian with respect to the projection \(\ovl{\E}_{\sig(0)/} \to \E \times_{\B} \ovl{\B}_{p\sig(0)/}\). 
\item
We will say that \(\sig\) is \emph{left \(p\)-outer} if the corresponding arrow in \(\ovl{\E}_{/\sig(2)}\) is strongly cocartesian with respect to the projection \(\ovl{\E}_{/\sig(2)} \to \E \times_{\B} \ovl{\B}_{/p\sig(2)}\). 
\item
We will say that \(\sig\) is \emph{right \(p\)-outer} if the corresponding arrow in \(\ovl{\E}_{\sig(0)/}\) is strongly cartesian with respect to the projection \(\ovl{\E}_{\sig(0)/} \to \E \times_{\B} \ovl{\B}_{p\sig(0)/}\). 
\end{itemize}
\end{define}

\begin{rem}\label{rem:unwinding}
Unwinding the definitions, we see that \(\sig\) is left \(p\)-inner if and only if, for \(n \geq 3\),
every commutative square of the form:
\begin{equation}
\label{inner 2-simp}
\begin{tikzcd}
& \Delta^{\{n-2,n-1,n\}} \ar[dl] \ar[dr, "\sig"]\\
\Lambda^{n}_{n-1} \ar[rr] \ar[d]& &\E \ar[d,"p"]\\
\Delta^n \ar[urr,dotted]\ar[rr]&  &\B
\end{tikzcd}
\end{equation}
admits a diagonal filler as displayed by the dotted arrow, and right \(p\)-inner if the same holds for diagrams as above where \(\Lam^n_{n-1}\) is replaced by \(\Lam^n_{1}\) and \(\Del^{\{n-2,n-1,n\}}\) by \(\Del^{\{0,1,2\}}\). On the other hand, \(\sig\) is left \(p\)-outer if and only if 
every commutative square of the form:
\begin{equation}
\label{outer 2-simp}
\begin{tikzcd}
& \Delta^{\{0,1,n\}} \ar[dl] \ar[dr, "\sig"]\\
\Lambda^{n}_{0} \ar[rr] \ar[d]& &\E \ar[d,"p"]\\
\Delta^n \ar[urr,dotted]\ar[rr]&  &\B
\end{tikzcd}
\end{equation}
admits a diagonal filler as displayed by the dotted arrow, and and right \(p\)-outer 
if and only if the same holds for diagrams as above where \(\Lam^n_0\) is replaced by \(\Lam^n_{n}\) and \(\Del^{\{0,1,n\}}\) by \(\Del^{\{0,n-1,n\}}\).
\end{rem}

\begin{rem}\label{rem:thin-is-inner}
It follows from Remark~\ref{rem:unwinding} that any thin triangle in \(\E\) is both left and right \(p\)-inner. On the other hand, any thin triangle whose left leg is \(p\)-cocartesian is left \(p\)-outer and any thin triangle whose right leg \(p\)-cartesian is right \(p\)-outer. 
\end{rem}

Remark~\ref{rem:thin-is-inner} admits a type of a converse statement:

\begin{prop}
\label{cart over thin = thin}
Let \(p\colon \E \to \B\) be a weak fibration of \(\infty\)-bicategories. Suppose \(\sig \colon \Delta^2 \to \E\) is a triangle such that \(p(\sig)\) is thin in \(\B\). If \(\sig\) is either left or right \(p\)-inner then \(\sig\) is thin in \(\E\). The same holds if we assume that \(\sig\) is left \(p\)-outer and left-degenerate 
or that \(\sig\) is right \(p\)-outer and right-degenerate. 
\end{prop}
\begin{proof}
Write \(x = \sig(0),z = \sig(2)\).
Suppose first that \(\sig\) is left \(p\)-inner.
The condition that \(p(\sig)\) is thin means that the arrow determined by \(p(\sig)\) in \(\ovl{\B}_{/pz}\) is cartesian with respect to the projection \(\ovl{\B}_{/pz} \to \B\) (see Example~\ref{ex:slice-projection}). 
By base change it then follows that the arrow determined by \(\sig\) in \(\E \times_{\B} \ovl{\B}_{/pz}\) is cartesian with respect to the projection \(\E \times_{\B} \ovl{\B}_{/pz} \to \E\). At the same time, the arrow determined by \(\sig\) in \(\ovl{\E}_{/z}\) is cartesian with respect to the projection \(\ovl{\E}_{/z} \to \E \times_{\B} \ovl{\B}_{/pz}\) (by the definition of being left \(p\)-inner) 
and so we conclude that the arrow determined by \(\sig\) in \(\ovl{\E}_{/z}\) is cartesian with respect to the composed projection \(\ovl{\E}_{/z} \to \E\). By Example~\ref{ex:slice-projection} 
we then get that \(\sig\) is thin. The dual argument using \(\ovl{\E}_{x/}\) and \(\ovl{\B}_{px/}\) applies to the case where \(\sig\) is right \(p\)-inner.

Now suppose that \(\sig\) is left \(p\)-outer and left-degenerate. 
Then \(p\sig\) is left-degenerate 
and since \(p\sig\) is assumed thin it follows that the arrow in \(\ovl{\B}_{/pz}\) determined by \(p(\sig)\) is invertible (indeed, by Example~\ref{ex:slice-projection} 
it is a cocartesian arrow with respect to the 1-outer fibration \(\ovl{\B}_{/pz} \to \ovl{\B}\) lying over an equivalence). Since \(\sig\) is left-degenerate 
we then have that the arrow determined by \(\sig\) in \(\E \times_{\B} \ovl{\B}_{/pz}\) is invertible as well. Since \(\sig\) is left \(p\)-outer it now follows that the arrow in \(\ovl{\E}_{/z}\) determined by \(\sig\) is a cocartesian lift of an equivalence along the 1-outer fibration \(\ovl{\E}_{/z} \to \E \times_{\B} \ovl{\B}_{/pz}\), and is hence itself an invertible arrow \(\ovl{\E}_{/z}\). As such, this arrow is in particular cocartesian with respect to the projection \(\ovl{\E}_{z/} \to \E\) (Remark~\ref{rem:equiv-is-car}), and so we conclude that \(\sig\) is thin by Example~\ref{ex:slice-projection}. 
The dual argument using \(\ovl{\E}_{x/}\) and \(\ovl{\B}_{px/}\) applies to the case where \(\sig\) is right \(p\)-outer and right-degenerate.
\end{proof}

\begin{rem}\label{rem:closed-whiskering}
By~\cite[Lemma 2.3.9 and Lemma 2.3.8]{GagnaHarpazLanariLaxLimits} the collection of strongly (co)cartesian arrows in a given 1-outer fibration is closed under composition and has a partial 2-out-of-3 property. More precisely, if \(\C \to \D\) is a 1-outer fibration of \(\infty\)-bicategories and
\[
        \begin{tikzcd}
                & y \ar[rd, "g"]   & \\
            x \ar[rr, "h"{swap, name=s}] \ar[ru,"f"]   && z 
            \ar[from=s, to=1-2, phantom, phantom, "\simeq"{description, pos=0.4}]
        \end{tikzcd}
\]
is a thin triangle in \(\C\) such that \(g\) is strongly cartesian then \(f\) is strongly cartesian if and only if \(h\) is strongly cartesian. Dually, if \(f\) is strongly cocartesian then \(g\) is strongly cocartesian if and only if \(h\) is strongly cocartesian. Now for any weak fibration \(p\colon \E \to \B\) of \(\infty\)-bicategories, both \(\ovl{\E}_{x/} \to \E \times_{\B} \ovl{\B}_{x/}\) and \(\ovl{\E}_{/x} \to \E \times_{\B} \ovl{\B}_{/x}\) are 1-outer fibrations for every \(x \in \E\) by Remark~\ref{r:base-change} and Example~\ref{ex:slice-projection}. The above partial 2-out-of-3 property for strongly (co)cartesian edges then translates to a certain partial 2-out-of-3 property for inner/outer triangles. More precisely, suppose given a 3-simplex \(\rho\colon \Del^3 \to \E\) of the form

		\begin{nscenter}
			\begin{tikzpicture}[scale=1.5]
			\square{%
				/square/label/.cd,
                0={$x_0$}, 1={$x_1$}, 2={$x_2$}, 3={$x_3$},
				01={}, 12={$f$}, 23={}, 03={}, 
				012={$\theta$}, 023={$\sig$}, 013={$\tau$}, 123={$\eta$},
				0123={},
			}
			\draw (3.3,-0.75) node{$.$};			  
			\end{tikzpicture}
		\end{nscenter}
If \(\theta\) is thin then we may consider \(\rho\) as encoding a thin triangle in \(\ovl{\E}_{/x_3}\) exhibiting the edge associated to \(\sig\) as the composite of those associated to \(\tau\) and \(\eta\), whereas if \(\eta\) is thin then we may consider \(\rho\) as encoding thin a triangle in \(\ovl{\E}_{x_0/}\) exhibiting the edge associated to \(\tau\) as the composite of those associated to \(\theta\) and \(\sig\). We hence conclude the following:
\begin{enumerate}
\item
If \(\theta\) is thin and \(\eta\) is left \(p\)-inner then \(\tau\) is left \(p\)-inner if and only if \(\sig\) is. 
\item
If \(\eta\) is thin and \(\theta\) is right \(p\)-inner then \(\sig\) is right \(p\)-inner if and only if \(\tau\) is. \item
If \(\theta\) is thin and \(\tau\) is left \(p\)-outer then \(\eta\) is left \(p\)-outer if and only if \(\sig\) is. 
\item
If \(\eta\) is thin and \(\sig\) is right \(p\)-outer then \(\theta\) is right \(p\)-outer if and only if \(\tau\) is. \end{enumerate}
Combining this with Remark~\ref{rem:thin-is-inner}, we conclude that the collection of left \(p\)-inner triangles in \(\E\) is closed under left whiskering with 1-morphisms and the collection of right \(p\)-inner triangles is closed under right whiskering with 1-morphisms. On the other hand, the collection of left \(p\)-outer triangles is only closed under left whiskering with \(p\)-cocartesian 1-morphisms and the collection of right \(p\)-outer triangles is only closed under right whiskering with \(p\)-cartesian arrows.

\end{rem}

Combining Remark~\ref{rem:closed-whiskering}(4) with Remark~\ref{rem:thin-is-inner} we get that if
		\begin{nscenter}
			\begin{tikzpicture}[scale=1.5]
			\square{%
				/square/label/.cd,
                0={$x_0$}, 1={$x_1$}, 2={$x_2$}, 3={$x_3$},
				01={}, 12={}, 23={$g$}, 03={}, 
				012={$\theta$}, 023={$\simeq$}, 013={$\tau$}, 123={$\simeq$},
				0123={},
			}
			\end{tikzpicture}
		\end{nscenter}
is a 3-simplex such that \(g\) is \(p\)-cartesian then \(\theta\) is right \(p\)-outer if and only if \(\tau\) is. The ``only if'' direction of this implication also holds for right \(p\)-inner triangles by Remark~\ref{rem:closed-whiskering}(2) and Remark~\ref{rem:thin-is-inner}.
The following lemma shows that on the other hand, the ``if'' direction of this implication actually holds for left \(p\)-inner and left \(p\)-outer triangles:

\begin{lemma}\label{lem:closed-whiskering}
Let \(p\colon \E \to \B\) be a weak fibration. Given a 3-simplex \(\rho\colon \Del^3 \to \E\) as above with \(g\) being \(p\)-cartesian, if \(\tau\) is left \(p\)-inner or left \(p\)-outer then so is \(\theta\).
\end{lemma}
\begin{proof}
Consider the commutative diagram
\[
\begin{tikzcd}
\ovl{\E}_{/x_2} \ar[d, "q_2"] & \ovl{\E}_{/g^{\sharp}} \ar[l, "\simeq"']\ar[r]\ar[d, "q_{2,3}"] & \ovl{\E}_{/x_3}\ar[d, "q_3"] \\
\ovl{\E} \times_{\B} \ovl{\B}_{/px_2} & \E \times_{\B} \ovl{\B}_{/pg^{\sharp}}\ar[r]\ar[l, "\simeq"']\ar[d] & \E \times_{\B} \ovl{\B}_{/px_3}\ar[d]\\
& \ovl{\B}_{/pg^{\sharp}} \ar[r] & \ovl{\B}_{/px_3} \\
\end{tikzcd}
\]
in which the left pointing horizontal arrows are trivial fibrations by~\cite[Lemma 2.4.6]{GagnaHarpazLanariLaxLimits}. Now since the two faces of \(\rho\) leaning on \(g\) are thin we have that the 3-simplex \(\rho\) determines an arrow \(e\) in \(\ovl{\E}_{/g^{\sharp}}\), whose image in \(\ovl{\E}_{/x_2}\) is the arrow associated to \(\theta\). We conclude that \(\theta\) is left \(p\)-inner (resp.~left \(p\)-outer) if and only if \(e\) is \(q_{2,3}\)-cartesian (resp.~\(q_{2,3}\)-cocartesian). 
Now the image of \(e\) in \(\ovl{\E}_{/x_3}\) is the arrow associated to \(\tau\), and since \(\tau\) is assumed to be left \(p\)-inner (resp.~left \(p\)-outer) its associated arrow is \(q_3\)-cartesian (resp.~\(q_3\)-cocartesian) in \(\ovl{\E}_{/x_3}\). At the same time, the assumption that \(g\) is \(p\)-cartesian implies that the vertical external square on the right is homotopy cartesian by~\cite[Lemma 2.3.6]{GagnaHarpazLanariLaxLimits}, and since the bottom right square is also cartesian we get from the pasting lemma that the top right square is homotopy cartesian. We hence conclude that \(e\) is \(q_{2,3}\)-cartesian (resp.~\(q_{2,3}\)-cocartesian), as desired.
\end{proof}

\begin{rem}
	\label{uniqueness of cart 2-lifts}
Inner and outer lifts in a given weak fibration are unique up to equivalence (once they exist). For example, suppose we have two left \(p\)-inner triangles \(\alpha,\alpha'\) in \(\E\), whose restriction to \(\Lambda^2_1\) and whose image under \(p\) coincide. By Remark~\ref{rem:unwinding} (applied for \(\sig'\) and \(n=3\)) we can find a 3-simplex \(H\)
	of the form:
		\begin{nscenter}
			\begin{tikzpicture}[scale=1.5]
			\square{%
				/square/label/.cd,
				01={}, 12={$f$}, 23={$g$}, 03={$h$}, 02={$f$}, 13={$h'$},
				012={$=$}, 023={$\alpha$}, 013={$\beta$}, 123={$\alpha'$},
				0123={$H$},
				/square/arrowstyle/.cd,
				01={equal}, 012={phantom, description},
				/square/labelstyle/.cd,
				012={anchor=center}
			}
			\end{tikzpicture}
		\end{nscenter}
	where \(\beta\) lives over a degenerate triangle in \(\B\). 
	By Remark~\ref{rem:closed-whiskering} we then have that \(\beta\) is left \(p\)-inner as well, and hence thin by Proposition~\ref{cart over thin = thin}. We may consider \(H\) as exhibiting an equivalence between \(\alp\) and \(\alp'\): its left most leg is invertible, the two faces leaning on this leg are thin, and the remaining two faces are \(\alp\) and \(\alp'\). In a very similar manner, if \(\alp\) and \(\alp'\) are assumed instead to be right \(p\)-outer whose restriction to \(\Lambda^2_2\) and whose image under \(p\) coincide then we construct the same type of \(3\)-simplex \(H\), only that this time we will take its \(\Del^{\{0,1,3\}}\)-face to be degenerate  
	and extend \(H\) from its right outer horn using the right \(p\)-outerness of \(\alp\). Replacing \(p\) with \(p^{\op}\colon \E^{\op} \to \B^{\op}\) we get the analogous statements for the uniqueness of \emph{right} inner and \emph{left} outer lifts.
\end{rem}

\begin{rem}
	\label{rem:hom-cart-fib}
	Let \(p\colon \E \to \B\) be a weak fibration of \(\infty\)-bicategories. For given \(x,z \in \E\), if we base change the weak fibration \(\ovl{\E}_{/z} \to \E \times_{\B} \ovl{\B}_{/pz}\) along the map \(\{x\} \times_{\B} \ovl{\B}_{/pz} \to \E \times_{\B} \ovl{\B}_{/pz}\) then we get the map of maximally scaled simplicial sets, whose underlying map of simplicial sets is 
	\[p_{x,z}^{\triangleright}\colon \uMaptr_{\E}(x,z) \to \uMaptr_{\B}(px,pz),\] 
	which is a model for the induced map on mapping \(\infty\)-categories by~\cite[\S 2.3]{GagnaHarpazLanariEquiv}
    (see in particular \cite[Proposition 2.23]{GagnaHarpazLanariEquiv}). Since base change maps detect (co)cartesian edges it follows that for 
    a triangle \(\sig\colon \Del^2 \to \E\) such that \(\sig|_{\Del^{\{0,1\}}}\) is degenerate we have that: 
	\begin{itemize}
	\item
	If \(\sig\) is left \(p\)-inner 
	then the corresponding edge of \(\uMaptr_{\E}(x,z)\) is \(p^{\triangleright}_{x,z}\)-cartesian. 
	\item
	If \(\sig\) is left \(p\)-outer 
	then the corresponding edge of \(\uMaptr_{\E}(x,z)\) is \(p^{\triangleright}_{x,z}\)-cocartesian.
	\end{itemize}
\end{rem}

\subsection{2-Inner and 2-outer fibrations}

\begin{define}\label{def:inner-outer-fibration}
Let \(p\colon \E \to \B\) be a weak fibration and \(\sig \colon \Del^2 \to \B\) a triangle.
\begin{itemize}
\item
We say that \(\sig\) has a \ndef{sufficiently supply of left (resp.~right) \(p\)-inner lifts} if for every \(\rho\colon \Lam^2_1 \to \E\) lifting \(\sig|_{\Lam^2_1}\) 
there exists a left (resp.~right) \(p\)-inner triangle \(\tau\colon \Del^2 \to \E\) such that \(p\tau = \sig\) and \(\tau|_{\Lam^2_1} = \rho\).
\item
We say that \(\sig\) has a \ndef{sufficiently supply of left \(p\)-outer lifts} if for every \(\rho\colon \Lam^2_0 \to \E\) lifting \(\sig|_{\Lam^2_0}\) and such that \(\rho|_{\Del^{\{0,1\}}}\) is \(p\)-cocartesian, 
there exists a left \(p\)-outer triangle \(\tau\colon \Del^2 \to \E\) such that \(p\tau = \sig\) and \(\tau|_{\Lam^2_0} = \rho\).
\item
We say that \(\sig\) has a \ndef{sufficiently supply of right \(p\)-outer lifts} if for every \(\rho\colon \Lam^2_2 \to \E\) lifting \(\sig|_{\Lam^2_0}\) and such that \(\rho|_{\Del^{\{1,2\}}}\) is \(p\)-cartesian, 
there exists a right \(p\)-outer triangle \(\tau\colon \Del^2 \to \E\) such that \(p\tau = \sig\) and \(\tau|_{\Lam^2_2} = \rho\).
\end{itemize}
\end{define}

\begin{define}
\label{cart inner fib defi}
A weak fibration of scaled simplicial sets \(p\colon\E \rightarrow \B\) is said to be a \emph{2-inner fibration} if 
every triangle in \(\B\) admits both a sufficient supply of left \(p\)-inner lifts and a sufficient supply of right \(p\)-inner lifts. 

We say that \(p\) is a 2-inner cartesian fibration if it is both a 2-inner fibration and a cartesian fibration (in the sense of Definition~\ref{d:car-fibration}).
We say that \(p\) is a \emph{2-inner cocartesian fibration} if \(p^{\op}\colon \E^{\op} \to \B^{\op}\) is a 2-inner cartesian fibration.
\end{define}

\begin{define}
\label{cart outer fib defi}
A weak fibration of scaled simplicial sets \(p\colon\E \rightarrow \B\) is said to be a \emph{2-outer fibration} if the following conditions are satisfied:
\begin{enumerate}
\item 
Every triangle in \(\B\) admits both a sufficient supply of left \(p\)-outer lifts and a sufficient supply of right \(p\)-outer lifts. 
\item
The collection of left \(p\)-outer triangles in \(\E\) is closed under right whiskering and the collection of right \(p\)-outer triangles is closed under left whiskering. 
\end{enumerate}
We say that \(p\) is a \ndef{2-outer cartesian fibration} if it is both a 2-outer fibration and a cartesian fibration (in the sense of Definition~\ref{d:car-fibration}).
Dually, we say that \(p\) is a \ndef{2-outer cocartesian fibration} if \(p^{\op}\colon \E^{\op} \to \B^{\op}\) is a 2-outer cartesian fibration.
\end{define}

\begin{rem}\label{rem:2-base-change}
As for the previously introduced classes of fibrations, the class of 2-inner/outer (co)cartesian fibrations is readily seen to be closed under base change.
\end{rem}

\begin{define}\label{def:morphism}
Given 2-inner/outer (co)cartesian fibrations \(q\colon \D \rightarrow \A\) and \(p\colon \E \rightarrow\B\), a \ndef{morphism of 2-inner/outer (co)cartesian fibrations} from \(q\) to \(p\) is a commutative square 
\[\begin{tikzcd}
\D \ar[d,"q"{swap}] \ar[r,"g"] & \E \ar[d,"p"]\\
\A \ar[r,"f"]& \B
\end{tikzcd}\]
such that \(g\) sends \(q\)\nbd-(co)cartesian arrows to \(p\)-(co)cartesian arrows and left/right \(p\)-inner/outer triangles to left/right \(q\)-inner/outer triangles.
\end{define}

\begin{prop}\label{prop:2-inner is locally cartesian}
Let \(p\colon \E \to \B\) be a weak fibration of \(\infty\)-bicategories. Then
\begin{enumerate}
\item
If \(p\) is a 2-inner fibration then the induced map \(p_{x,y}\colon \uMaptr_{\E}(x,y) \to \uMaptr_{\B}(px,py)\) is a cartesian fibration of \(\infty\)-categories for every \(x,y \in \E\).
\item
If \(p\) is a 2-outer fibration then the induced map \(p_{x,y}\colon \uMaptr_{\E}(x,y) \to \uMaptr_{\B}(px,py)\) is a cocartesian fibration of \(\infty\)-categories for every \(x,y \in \E\).
\end{enumerate}
\end{prop}
\begin{proof}
The condition that \(p\) is a weak fibration implies that the induced map \(\ovl{\E}_{/y} \to \E \times_{\B} \ovl{\B}_{/py}\) is a 1-outer fibration (see Remark~\ref{r:base-change} and Example~\ref{ex:slice-projection}), and hence its base change \((\ovl{\E}_{/y})_x \to  (\ovl{\B}_{/py})_{px}\) is a 1-outer fibration as well, and in particular a weak fibration. This last map is between \(\infty\)-bicateories in which every triangle is thin, and hence its underlying map of simplicial sets \(p_{x,y}\colon \uMaptr_{\E}(x,y) \to \uMaptr_{\B}(px,py)\) is an inner fibration between \(\infty\)-categories. We also note that arrows in \(\uMaptr_{\B}(px,py)\) correspond to triangles \(\sig\colon \Del^2 \to \B\) such that \(\sig|_{\Del^{\{0,1\}}}\) is degenerate on \(px\) and \(\sig|_{\Del^{\{2\}}} = py\). More precisely, these are arrows from \(\sig|_{\Del^{\{0,2\}}}\) to \(\sig|_{\Del^{\{1,2\}}}\), considered as vertices in \(\uMaptr_{\B}(px,py)\). 

Now if \(p\) is a 2-inner fibration then for any choice of an edge \(g\colon x \to y\) in \(\E\) lifting \(\sig|_{\Del^{\{1,2\}}}\) there exists a left \(p\)-inner triangle \(\tau\) such that \(\tau|_{\Del^{\{0,1\}}}\) is degenerate, \(\tau|_{\Del^{\{1,2\}}} = g\) and \(p\tau = \sig\). By Remark~\ref{rem:hom-cart-fib} the triangle \(\tau\) determines a \(p_{x,y}\)-cartesian lift with target \(g\) of the arrow in \(\uMaptr_{\B}(px,py)\) determined by \(\sig\). Since \(g\) was arbitrary we conclude that \(p_{x,y}\) is a cartesian fibration. Similarly, if \(p\) is a 2-outer fibration we have that for any choice of an edge \(g\colon x \to y\) in \(\E\) lifting \(\sig|_{\Del^{\{0,2\}}}\) there exists a left \(p\)-outer triangle \(\tau\) such that \(\tau|_{\Del^{\{0,1\}}}\) is degenerate, \(\tau|_{\Del^{\{0,2\}}} = g\) and \(p\tau = \sig\). By Remark~\ref{rem:hom-cart-fib} the triangle \(\tau\) determined a \(p_{x,y}\)-cocartesian lift with domain \(g\) of the arrow in \(\uMaptr_{\B}(px,py)\) determined by \(\sig\), and so \(p_{x,y}\) is a now a cocartesian fibration.
\end{proof}

\subsection{Congruent triangles}

Our goal in the present subsection is to establish some preliminary results showing that for most questions about 2-outer (co)cartesian fibrations between \(\infty\)-bicategories, one may restrict attention to left/right outer triangles which are left/right-degenerate in the following sense:

\begin{define}\label{def:left-right-degenerate}
    We say that a triangle \(\sigma \colon \Delta^2 \to X\) is \ndef{left-degenerate}
    if the edge \(\sigma|_{\Delta^{\{0, 1\}}}\) of \(X\) is degenerate.
    The triangle \(\sigma\) is said to be \ndef{right-degenerate} if the edge \(\sigma|_{\Delta^{\{1,2\}}}\) is degenerate.
\end{define}

\begin{warning}
To avoid confusion, let us emphasize that left (or right) degenerate triangles in the sense of Definition~\ref{def:left-right-degenerate} are not necessarily themselves degenerate. This terminology is also used in~\cite{GarciaStern2Fib}.
\end{warning}

We will make use of the following construction.

\begin{define}\label{def:congruent}
Let \(X\) be a scaled simplicial set. 
Given a 3-simplex \(\rho\colon \Del^3 \to X\) of the form
		\begin{nscenter}
			\begin{tikzpicture}[scale=1.5]
			\square{%
				/square/label/.cd,
                0={$x_0$}, 1={$x_1$}, 2={$x_2$}, 3={$x_3$},
				01={}, 12={$f$}, 23={}, 03={}, 
				012={$\simeq$}, 023={$\sig$}, 013={$\sig'$}, 123={$\simeq$},
				0123={},
			}
			\end{tikzpicture}
		\end{nscenter}
we say that \(\rho\) exhibits \(\sig\) as \emph{left congruent} to \(\sig'\), and \(\sig'\) as \emph{right congruent} to \(\sig\). 
\end{define}

\begin{lemma}\label{lem:2-out-of-3}
Let \(p\colon \E \to \B\) be a weak fibration and let \(\sig,\sig'\) be two triangles in \(\E\) 
such that \(\sig\) is left congruent to \(\sig'\) via a 3-simplex \(\rho\) as in Definition~\ref{def:congruent}. Then the following holds:
\begin{enumerate}
\item
\(\sig\) is left/right \(p\)-inner if and only if \(\sig'\) is.
\item
If \(f\) is \(p\)-cocartesian and  \(\sig'\) is left \(p\)-outer then \(\sig\) is left \(p\)-outer.
\item
If \(f\) is \(p\)-cartesian and \(\sig\) is right \(p\)-outer then \(\sig'\) is right \(p\)-outer.
\item
If \(f\) is an equivalence then \(\sig\) is left/right \(p\)-outer if and only if \(\sig'\) is.
\end{enumerate}
\end{lemma}

\begin{proof}
The first three statements follows directly from the partial 2-out-of-3 properties elaborated in Remark~\ref{rem:closed-whiskering} together with the fact that thin triangles are always left and right \(p\)-inner and that thin triangles with \(p\)-cocartesian left leg are left \(p\)-outer, while thin triangles with \(p\)-cartesian right leg are right \(p\)-outer, see Remark~\ref{rem:thin-is-inner}. To prove the last claim, we note that if \(f\) is an equivalence then the 3-simplex \(\rho\) determines in particular an equivalence between the arrows associated to \(\sig\) and \(\sig'\) in \(\ovl{\E}_{{x_0/}}\), where \(x_0 = \sig(0) = \sig'(0)\), and hence each of these arrows is strongly cartesian with respect to the projection \(\ovl{\E}_{{x_0}/} \to \E \times_{\B} \B_{{px_0}/}\) if and only if the other is so (see Remark~\ref{rem:car-closed-equiv}). Similarly, \(\rho\) determines an equivalence between the arrows associated to \(\sig\) and \(\sig'\) in \(\ovl{\E}_{/{x_3}}\),
where \(x_3 = \sig(2) = \sig'(2)\), and hence each of these arrows is strongly cocartesian with respect to the projection \(\ovl{\E}_{/{x_3}} \to \E \times_{\B} \B_{/{px_3}}\) if and only if the other is so.
\end{proof}

\begin{lemma}\label{lem:second-step}
Let \(p\colon \E \to \B\) be a weak fibration between \(\infty\)-bicategories and let \(\sig,\sig'\) be two triangles in \(\B\) such that \(\sig\) is left congruent to \(\sig'\). Then the following holds:
\begin{enumerate}
\item
If the left leg of \(\sig'\) admits a sufficient supply of \(p\)-cocartesian lifts and \(\sig'\) admits a sufficient supply of left \(p\)-outer  
lifts then \(\sig\) admits a sufficient supply of left \(p\)-outer lifts.
\item
If the right leg of \(\sig\) admits a sufficient supply of \(p\)-cartesian lifts and \(\sig\) admits a sufficient supply of right \(p\)-outer 
lifts then \(\sig'\) admits a sufficient supply of right \(p\)-outer 
lifts.
\end{enumerate}
\end{lemma}
\begin{proof}
We prove the first claim, the second claim then follows by applying the first claim to \(\E^{\op}\) and switching the roles of \(\sig\) and \(\sig'\). Let \(\rho\colon \Del^3 \to \E\) be a 3-simplex as above exhibiting \(\sig\) as left congruent to \(\sig'\). In particular, \(\sig = \rho|_{\Del^{\{0,2,3\}}}$. We need to show that for every pair of arrows 
\[ \xymatrix{
y_0\ar[r]^{e_{0,3}}\ar[dr]_{e_{0,2}} & y_3 \\
& y_2  
}\]
of \(\E\) lifting \(\sig|_{\Del^{\{0,2\}}}\) and \(\sig|_{\Del^{\{0,3\}}}\), respectively, with \(e_{0,2}\) being \(p\)-cocartesian, there exists a left \(p\)-outer triangle 
	\[
		\begin{tikzcd}
		y_0\ar[dr,"e_{0,2}"{description,name=k}]\ar[r, "e_{0,3}"{description,name=k'}]& y_3\\
		\ar[Rightarrow, shorten <= 1.5ex, shorten >= 2.5ex, from=k',to=2-2]& x_2\ar[u,"e_{2,3}"']
		\end{tikzcd}
    \]
lifting \(\sig\). 
Now by assumption the left leg of \(\sig'\) admits a \(p\)-cocartesian lift \(e_{0,1}\colon y_0 \to y_1\).
Since \(\sig'\) is assumed to have a sufficient supply of left \(p\)-outer lifts we may now find a left \(p\)-outer lift \(\tau'\) of \(\sig'\), depicted as
	\[
		\begin{tikzcd}
		y_0\ar[r, "e_{0,3}"{description,name=k'}]\ar[d,"e_{0,1}"']& y_3\\
		y_1 \ar[Rightarrow, shorten <= 1.5ex, shorten >= 2.5ex, from=k',to=2-1]\ar[ur,"e_{1,3}"']& \ .
		\end{tikzcd} 
    \]
At the same time, since \(\rho|_{\Del^{\{0,1,2\}}}\) is thin and \(e_{0,1}\) is \(p\)-cocartesian the pair \(e_{0,1},e_{0,2}\) extends to a thin triangle \(\theta\colon \Del^{\{0,1,2\}} \to \E\) such that \(p\theta = \rho|_{\Del^{\{0,1,2\}}}\). Set \(e_{1,2} = \theta|_{\Del^{\{1,2\}}}\). By the 2-out-of-3 property for \(p\)-cocartesian edges (e.g., the dual of~\cite[Lemma 2.3.8]{GagnaHarpazLanariLaxLimits}) we get that \(e_{1,2}\) is \(p\)-cocartesian. Since \(\rho|_{\Del^{\{1,2,3\}}}\) is thin we may now lift it to a triangle \(\eta\colon \Del^{\{1,2,3\}} \to \E\) extending \(e_{1,2}\) and \(e_{1,3}\).  
The triangles \(\tau',\theta\) and \(\eta\) now glue to give a map \(\alp\colon \Lam^3_1 \to \E\) lifting \(\rho|_{\Lam^3_1}\), where \(\alp\) sends \(\Del^{\{0,1,2\}}\) and \(\Del^{\{1,2,3\}}\) to thin triangles by construction. Since \(p\) is a weak fibration we may extend \(\alp\) to a map \(\bet \colon \Del^3 \to \E\) lifting \(\rho\), so that \(\tau := \bet|_{\Del^{\{0,2,3\}}}\) gives in particular a triangle lifting \(\sig\) and extending \(e_{0,2},e_{0,3}\). The 3-simplex \(\bet\) now exhibits \(\tau\) as left congruent to \(\tau'\) and hence \(\tau\) is left \(p\)-outer by Lemma~\ref{lem:2-out-of-3}(2).
\end{proof}

\begin{rem}\label{rem:left-degenerate}
In the proof of Lemma~\ref{lem:second-step}(1) we have a complete freedom in choosing the \(p\)-cocartesian lift \(e_{0,1}\) of \(\sig'|_{\Del^{\{0,1\}}}\). We may consequently slightly weaken the assumption on \(\sig'\): it suffices to assume that for every choice of lift of \(e_{1,3}\) of \(\sig'|_{\Del^{\{1,2\}}}\) there exists some left \(p\)-outer lift \(\tau'\) of \(\sig'\) such that \(\tau|_{\Del^{\{1,2\}}} = e_{1,3}\) ad \(\tau|_{\Del^{\{0,1\}}}\) is \(p\)-cartesian (as opposed to assuming this for any choice of \(p\)-cocartesian lift of \(\tau|_{\Del^{\{0,1\}}}\)).
For example, if \(\sig'\) is left-degenerate then it suffices to assume that it has a sufficient supply of \emph{left-degenerate} left \(p\)-outer lifts (that is, for each choice of a lift of \(\sig'|_{\Del^{\{1,2\}}}\)). A similar statement holds for the right \(p\)-outer case of Lemma~\ref{lem:second-step}(2). 
\end{rem}

\begin{lemma}\label{lem:reduce}
Let \(\B\) be an \(\infty\)-bicategory. Then the following holds:
\begin{enumerate}
\item
Any triangle \(\sig\) in \(\B\) is left congruent to a left-degenerate triangle \(\sig'\). 
\item
Any triangle \(\sig'\) in \(\B\) is right congruent to a right-degenerate triangle \(\sig\). 
\end{enumerate}
\end{lemma}
\begin{proof}
We prove the first claim. The second claim then follows by applying the first statement to \(\B^{\op}\) and switching the roles of \(\sig\) and \(\sig'\). Let us depict \(\sig\) as
	\[
		\begin{tikzcd}
		x_0\ar[dr,"f"{description,name=k}]\ar[r, "h"{description,name=k'}]& x_3\\
		\ar[Rightarrow, shorten <= 1.5ex, shorten >= 2.5ex, from=k',to=2-2]& x_2\ar[u,"g"']
		\end{tikzcd}
    \]
Let \(K \subseteq \Del^3\) be the simplicial subset spanned by the faces \(\Del^{\{0,2,3\}}\) and \(\Del^{\{0,1,2\}}\) and let \(\rho\colon K \to \B\) be the map which sends \(\Del^{\{0,2,3\}}\) to \(\sig\) and \(\Del^{\{0,1,2\}}\) to the degenerate triangle whose left leg is degenerate and whose other two legs are \(f\). We may then visualise \(\rho\) as
	\[
		\begin{tikzcd}
		x_0\ar[dr,"f"{description,name=k}]\ar[r, "h"{description,name=k'}]\ar[d,equal] & x_3\\
		x_1\ar[r,"f"'] \ar[phantom, "=",from=k,to=2-1]\ar[Rightarrow, shorten <= 1.5ex, shorten >= 2.5ex, from=k',to=2-2]& x_2 \ . \ar[u,"g"'] 
		\end{tikzcd} 
    \]
Now since \(\B\) is an \(\infty\)-bicategory we may extend \(\rho\) to a map \(\rho'\colon \Lam^3_2 \to \B\) which sends the triangle \(\Del^{\{1,2,3\}}\) to a thin triangle, and then proceed to extend \(\rho'\) to a full 3-simplex \(\Del^3 \to \B\), which we can depict as
		\begin{nscenter}
			\begin{tikzpicture}[scale=1.5]
			\square{%
				/square/label/.cd,
                0={$x_0$}, 1={$x_1$}, 2={$x_2$}, 3={$x_3$},
				01={}, 02={$f$},12={$f$}, 23={$g$}, 03={$h$}, 
				012={$=$}, 023={$\sig$}, 013={$\sig'$}, 123={$\simeq$},
				0123={},
				/square/arrowstyle/.cd,
				01={equal}, 012={phantom, description},
				/square/labelstyle/.cd,
				012={anchor=center}
			}
			\draw (3.3,-0.75) node{$.$};
			\end{tikzpicture}
		\end{nscenter}
This 3-simplex then exhibits \(\sig\) as left congruent to \(\sig'\), and \(\sig'\) is left-degenerate, as desired.
\end{proof}

\begin{cor}\label{cor:only-degenerate}
Let \(p\colon \E \to \B\) be a weak fibration. If every left-degenerate triangle in \(\B\) has a sufficient supply of left \(p\)-outer lifts then every triangle in \(\B\) has a sufficient supply of left \(p\)-outer lifts. Similarly, if every right-degenerate triangle in \(\B\) has a sufficient supply of right \(p\)-outer lifts then every triangle in \(\B\) has a sufficient supply of right \(p\)-outer lifts.
\end{cor}
\begin{proof}
Combine Lemma~\ref{lem:reduce} with Lemma~\ref{lem:second-step} using the fact that any degenerate arrow in \(\B\) has a sufficient supply of (co)cartesian lifts.
\end{proof}

Using Remark~\ref{rem:left-degenerate} we may also obtain the following strengthening of Corollary~\ref{cor:only-degenerate}:
\begin{cor}\label{cor:only-degenerate-enhanced}
Let \(p\colon \E \to \B\) be a weak fibration. If every left-degenerate triangle in \(\B\) has a sufficient supply of left-degenerate left \(p\)-outer lifts then every triangle in \(\B\) has a sufficient supply of left \(p\)-outer lifts. Similarly, if every right-degenerate triangle in \(\B\) has a sufficient supply of right-degenerate right \(p\)-outer lifts then every triangle in \(\B\) has a sufficient supply of right \(p\)-outer lifts.
\end{cor}

\subsection{Homotopy invariance of fibrations}

Our goal in the present section is to prove the following homotopy invariance property for (co)cartesian fibrations.
\begin{prop}\label{prop:invariance}
Let
\[
\begin{tikzcd}
\D \ar[d,"q"{swap}] \ar[r,"\simeq"] & \E \ar[d,"p"]\\
\A \ar[r,"\simeq"]& \B
\end{tikzcd}
\]
be a commutative diagram of \(\infty\)-bicategories whose vertical maps are both bicategorical fibrations and whose horizontal maps are bicategorical equivalences. Then \(p\) is an 2-inner/outer (co)cartesian fibration if and only if \(q\) is. In addition, an edge in \(\D\) is \(q\)-(co)cartesian if and only if its image in \(\E\) is \(p\)-(co)cartesian, and similarly a triangle in \(\D\) is left/right \(q\)-inner/outer if and only if its image in \(\E\) is so with respect to \(p\).
\end{prop}

The proof of Proposition~\ref{prop:invariance} will require a couple of lemmas, and will be given at the end of the section.

\begin{lemma}\label{lem:invariance-1}
Let
\[
\begin{tikzcd}
\D \ar[d,"q"{swap}] \ar[r,"\simeq"] & \E \ar[d,"p"]\\
\A \ar[r,"\simeq"]& \B
\end{tikzcd}
\]
be a commutative diagram of \(\infty\)-bicategories whose vertical maps are both bicategorical fibrations and whose horizontal maps are bicategorical equivalences. Then an arrow in \(\D\) is \(q\)-(co)cartesian if and only if its image in \(\E\) is \(p\)-(co)cartesian.
\end{lemma}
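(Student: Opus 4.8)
The plan is to reduce the statement to the standard characterisation of $p$\nbd-cartesian $1$\nbd-morphisms in terms of mapping $\infty$\nbd-categories, and then to exploit the fact that a bicategorical equivalence between $\infty$\nbd-bicategories induces categorical equivalences on all mapping $\infty$\nbd-categories. First I would note that $(-)^{\op}$ is an automorphism of $\Ss$ preserving the bicategorical model structure, hence preserving bicategorical fibrations, and that it exchanges $p$\nbd-cartesian edges with $p$\nbd-cocartesian edges; so it suffices to treat the cartesian assertion, applied to the diagram obtained by taking opposites everywhere.

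\textbf{Main argument.} The key input I would use is the $\infty$\nbd-bicategorical counterpart of \cite[2.4.4.3]{HTT} (obtained by the usual slice-category manipulation from Remark~\ref{rem:hom-cart-fib}, namely that $\Map^{\triangleright}_{\E}(a,z)\to\Map^{\triangleright}_{\B}(pa,pz)$ is a base change of the outer fibration $\E_{/z}\to\E\times_{\B}\B_{/pz}$, together with the fact that base change maps detect strongly (co)cartesian edges): for a weak fibration $p\colon\E\to\B$ of $\infty$\nbd-bicategories, an edge $f\colon x\to y$ of $\E$ is $p$\nbd-cartesian if and only if for every object $a\in\E$ the commutative square
\[
\begin{tikzcd}
\Map^{\triangleright}_{\E}(a,x) \ar[r] \ar[d] & \Map^{\triangleright}_{\E}(a,y) \ar[d]\\
\Map^{\triangleright}_{\B}(pa,px) \ar[r] & \Map^{\triangleright}_{\B}(pa,py)
\end{tikzcd}
\]
is a homotopy pullback of $\infty$\nbd-categories. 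Since $\Map^{\triangleright}_{\E}(-,x)$ and $p$ send equivalences of objects to equivalences, the validity of this condition depends only on the equivalence class of $a$. Now write $g\colon\D\to\E$ and $v\colon\A\to\B$ for the two horizontal equivalences, so that $pg=vq$. Given an edge $e\colon x\to y$ of $\D$ and an object $a\in\D$, the functoriality of the mapping-category construction and the identity $pg=vq$ produce a morphism from the square attached to $(e,a)$ with respect to $q$ to the square attached to $(g(e),g(a))$ with respect to $p$, and all four comparison functors are categorical equivalences — the two top ones because $g$ is a bicategorical equivalence, the two bottom ones because $v$ is. Hence one square is a homotopy pullback if and only if the other is. Combining this with the equivalence-invariance noted above and with the essential surjectivity of $g$ on objects — so that every object of $\E$ is equivalent to one of the form $g(a)$ — we conclude that the squares attached to $e$ are all homotopy pullbacks exactly when those attached to $g(e)$ are; that is, $e$ is $q$\nbd-cartesian if and only if $g(e)$ is $p$\nbd-cartesian. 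The cocartesian case follows from the opening reduction.

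\textbf{Main obstacle and an alternative.} The genuine obstacle is the first step: one needs the mapping-$\infty$-category criterion for $p$\nbd-cartesian edges at this level of generality. If that criterion is available off the shelf from \cite{GagnaHarpazLanariLaxLimits} or \cite{GagnaHarpazLanariEquiv}, the remainder of the argument is purely formal; otherwise it must be established by the standard slice-category manipulation, which is where the real work lies. A variant that sidesteps mapping categories is to observe that $\D\to\E\times_{\B}\A$ is a bicategorical equivalence over $\A$ between two bicategorical fibrations over $\A$ (using two-out-of-three together with the fact that the pullback of an equivalence along a fibration between fibrant objects is an equivalence, applied to the base change $\E\times_{\B}\A\to\E$ of $v$ along $p$), and then to prove a transfer lemma to the effect that such a map detects (co)cartesian edges; I would still favour the mapping-category route, since the transfer lemma requires essentially the same bookkeeping.
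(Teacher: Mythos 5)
Your proposal is correct and follows essentially the same route as the paper: the paper's proof consists precisely of replacing "cartesian" by "weakly cartesian" via \cite[Proposition 2.3.7]{GagnaHarpazLanariLaxLimits} and then invoking the mapping-$\infty$-category characterization of weakly (co)cartesian arrows from \cite[Proposition 2.3.3]{GagnaHarpazLanariLaxLimits}, which is exactly the homotopy-pullback-square criterion you identify as the key input. The only difference is that you spell out the formal bookkeeping (invariance under equivalence of the test object, essential surjectivity) that the paper leaves implicit.
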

\begin{proof}
By~\cite[Proposition 2.3.7]{GagnaHarpazLanariLaxLimits} we may replace the property of being cartesian with that of being weakly (co)cartesian. The desired claim now follows from the characterization~\cite[Proposition 2.3.3]{GagnaHarpazLanariLaxLimits} of weakly (co)cartesian arrows in terms of mapping spaces.
\end{proof}

In any weak fibration \(p\colon \E \to \B\) between \(\infty\)-bicategories, the collection of \(p\)-(co)cartesian arrows is closed under equivalences in the arrow category, see~\cite[Remark 2.3.12]{GagnaHarpazLanariLaxLimits}. We now show that a similar property holds for inner/outer triangles:

\begin{lemma}\label{lem:invariance-triangle}
Let \(p\colon \E \to \B\) be a weak fibration between \(\infty\)-bicategories, and let \(H\colon \Del^1_{\flat} \times \Del^2_{\flat} \to \E\) be levelwise invertible natural transformation between triangles. Then \(\sig_0 := H|_{\Del^{\{0\}} \times \Del^2}\) is left/right \(p\)-inner/outer if and only if \(\sig_1 := H|_{\Del^{\{1\}} \times \Del^2}\) is so.
\end{lemma}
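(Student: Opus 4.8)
The plan is to cut the four cases to two using the involution $(-)^{\op}$, then recognize the left $p$-inner (resp.\ left $p$-outer) condition on a triangle as the assertion that an associated edge is strongly cartesian (resp.\ cocartesian) for a suitable fibration, so that the conclusion follows from the stability --- recorded above via~\cite[Remark~2.3.12]{GagnaHarpazLanariLaxLimits} --- of strongly (co)cartesian edges under equivalence in the arrow category. Concretely, I would first note that $(-)^{\op}$ carries a right $p$-inner (resp.\ right $p$-outer) triangle $\sig$ to a left $p^{\op}$-inner (resp.\ left $p^{\op}$-outer) triangle $\sig^{\op}$, under the identification of $(\E^{\op})_{/\sig^{\op}(2)}$ with $(\E_{\sig(0)/})^{\op}$, and that $H^{\op}\colon\Del^1_\flat\times\Del^2_\flat\to\E^{\op}$ is again a levelwise invertible natural transformation between $\sig_0^{\op}$ and $\sig_1^{\op}$; so it suffices to treat the left $p$-inner and left $p$-outer statements. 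By Definition~\ref{def:inner-triangle}, for $i\in\{0,1\}$ the triangle $\sig_i$ is left $p$-inner (resp.\ left $p$-outer) precisely when the edge $a_i$ of $\E_{/\sig_i(2)}$ represented by the $2$-simplex $\sig_i$ is strongly cartesian (resp.\ strongly cocartesian) with respect to the outer fibration $\pi_i\colon\E_{/\sig_i(2)}\to\E\times_{\B}\B_{/p\sig_i(2)}$.

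Next, writing $z_i := \sig_i(2)$, I would exploit that $e := H|_{\Del^1\times\{2\}}\colon z_0\to z_1$ is an invertible edge of $\E$. Post-composition with $e$ induces an equivalence $e_!\colon\E_{/z_0}\xrightarrow{\simeq}\E_{/z_1}$, and post-composition with $pe$ induces an equivalence $\E\times_{\B}\B_{/pz_0}\xrightarrow{\simeq}\E\times_{\B}\B_{/pz_1}$, the two being compatible with $\pi_0$ and $\pi_1$. Since strongly (co)cartesian edges are detected across a commuting square of (weak) fibrations whose horizontal maps are equivalences, $a_0$ is strongly cartesian (resp.\ cocartesian) over $\pi_0$ if and only if $e_!(a_0)$ is so over $\pi_1$; it therefore remains to compare $e_!(a_0)$ with $a_1$ inside the arrow category of $\E_{/z_1}$. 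To this end I would precompose $H$ with $\id_{\Del^1}$ times the collapse $\Del^1\times\Del^1\to\Del^2$ sending the bottom edge to the vertex $2$ --- so that $\Del^1\times\Del^1\to\Del^2\xrightarrow{\sig_i}\E$ is precisely the square representing $a_i$ in $\Fun(\Del^1,\E)$ --- and, using a chosen inverse and unit for $e$, reparametrize the outcome into a natural transformation in $\E_{/z_1}$ from $e_!(a_0)$ to $a_1$. Its components are the edges $H|_{\Del^1\times\{0\}}$ and $H|_{\Del^1\times\{1\}}$, which are invertible in $\E$ by hypothesis and hence invertible in $\E_{/z_1}$ since $\E_{/z_1}\to\E$ reflects invertible edges; so this natural transformation is an equivalence in the arrow category, and the stability result quoted above finishes the proof.

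I expect the formal skeleton to be routine and the genuine work to be in the bookkeeping of the second paragraph: verifying that post-composition with the invertible edge $e$ produces equivalences of slice $\infty$-bicategories that are coherently compatible with the projections $\pi_i$, and that the collapse-and-reparametrize recipe really yields an edge of the arrow category of $\E_{/z_1}$ with the prescribed endpoints $e_!(a_0)$ and $a_1$. A variant that sidesteps the moving target altogether would be to assemble all the projections $\pi_z$ into a single map out of $\Fun(\Del^1,\E)$ of which each $\pi_z$ is a base change, invoke that base change reflects and detects strongly (co)cartesian edges, and observe that $H$ then directly furnishes an equivalence in the arrow category of $\Fun(\Del^1,\E)$ between the two representing squares; one concludes as before.
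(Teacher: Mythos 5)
Your overall strategy is essentially the paper's, repackaged: reduce the right-handed cases to the left-handed ones via \((-)^{\op}\), translate ``left \(p\)-inner/outer'' into strong (co)cartesianness of an edge of \(\E_{/\sig(2)}\) over \(\E\times_{\B}\B_{/p\sig(2)}\), transport along the invertible edge \(e = H|_{\Del^1\times\{2\}}\) using invariance of strongly (co)cartesian edges under equivalences of fibrations (Lemma~\ref{lem:invariance-1}), and finish by an equivalence in the arrow category. But one step is false as written: the projection \(\E_{/z_1}\to\E\) does \emph{not} reflect invertible edges. An edge of \(\E_{/z_1}\) is a triangle \(\sig\) of \(\E\) with \(\sig(2)=z_1\), and it is invertible in \(\E_{/z_1}\) precisely when \(\sig|_{\Del^{\{0,1\}}}\) is invertible in \(\E\) \emph{and} \(\sig\) is thin; any non-thin triangle lying over a degenerate edge is a counterexample to reflection. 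So knowing that the components of your comparison transformation lie over the invertible edges \(H|_{\Del^1\times\{0\}}\) and \(H|_{\Del^1\times\{1\}}\) is not enough --- you must also check that the witnessing triangles are thin. They are (they are assembled from the naturality squares of \(H\), which are thin because every triangle of \(\Del^1_{\flat}\times\Del^2_{\flat}\) projecting to a degenerate triangle of \(\Del^2\) is thin, together with thin composition witnesses for \(e\circ(-)\)), but verifying this is precisely the content of the paper's decomposition of the prism into the three tetrahedra \(\rho_0,\rho_1,\rho_2\), whose relevant faces are exactly these thin triangles.

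The second place where the write-up underestimates the difficulty is the functor \(e_!\). In the scaled model there is no on-the-nose ``post-composition with \(e\)'' map \(\E_{/z_0}\to\E_{/z_1}\) commuting with the projections to \(\E\times_{\B}\B_{/pz_i}\); the paper replaces it by the span through the slice \(\E_{/e^{\sharp}}\) over the marked edge, and shows that its two legs (and their analogues after base change over \(\B_{/pe^{\sharp}}\)) are trivial fibrations using \cite[Lemmas 2.3.6 and 2.4.6]{GagnaHarpazLanariGrayLaxFunctors} together with the fact that an invertible edge is weakly \(p\)-cartesian; only then does Lemma~\ref{lem:invariance-1} apply. Your variant through \(\Fun(\Del^1,\E)\) faces the same issue in disguise. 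So the skeleton is right, but the two items you defer as bookkeeping --- the thinness of the comparison triangles and the actual construction of the transport along \(e\) --- are the proof, and your stated justification for the first (conservativity of \(\E_{/z_1}\to\E\)) is incorrect.
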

\begin{proof}
To fix ideas we prove the left inner and left outer cases, the right inner and right outer cases then follow by replacing \(p\) with opposite. In addition, it will suffice to prove that \(\sig_0\) is left \(p\)-inner/outer as soon as \(\sig_1\) is such, since we can get the other direction by replacing \(H\) by the inverse equivalence. We hence assume that \(\sig_1\) is left \(p\)-inner/outer.

For \(i=0,1,2\) let \(\rho_i\colon \Del^3 \to \Del^1 \times \Del^2\) be the \(3\)-simplex given by 
\[
\rho_i(j) = \begin{cases}
(0,j)& j\leq i\\
(1,j-1) & j > i
\end{cases}
\]
We may then write \(H\rho_0\) as
		\begin{nscenter}
			\begin{tikzpicture}[scale=1.5]
			\square{%
				/square/label/.cd,
				01={$\simeq$}, 12={}, 23={}, 03={},
				012={$\simeq$}, 023={$\tau$}, 013={$\simeq$}, 
				123={$\sig_1$},
				/square/arrowstyle/.cd,
				012={phantom, description},
				013={phantom, description},
				/square/labelstyle/.cd,
				012={anchor=center}
			}
			\end{tikzpicture}
		\end{nscenter}
for some triangle \(\tau\). Since \(\sig_1\) is left \(p\)-inner/outer we get from the 2-out-of-3 properties of Remark~\ref{rem:closed-whiskering} that \(\tau\) is left \(p\)-inner/outer (in the outer case, we also point out that the triangle \(H\rho_0|_{\Del^{\{0,1,3\}}}\) is left \(p\)-outer by Remark~\ref{rem:thin-is-inner} and Remark~\ref{rem:equiv-is-car} since it is thin and its left leg is an equivalence). 
We now write \(H\rho_1\) as
		\begin{nscenter}
			\begin{tikzpicture}[scale=1.5]
			\square{%
				/square/label/.cd,
				01={}, 12={$\simeq$}, 23={}, 03={},
				012={$\simeq$}, 023={$\tau$}, 013={$\tau'$}, 
				123={$\simeq$},
				/square/arrowstyle/.cd,
				012={phantom, description},
				123={phantom, description},
				/square/labelstyle/.cd,
				012={anchor=center}
			}
			\end{tikzpicture}
		\end{nscenter}
		for some triangle \(\tau'\). Since \(\tau\) is left \(p\)-inner/outer we now get from Points (1) and (4) of Lemma~\ref{lem:2-out-of-3} that \(\tau'\) is left \(p\)-inner/outer.
Finally, we may write \(H\rho_2\) as
		\begin{nscenter}
			\begin{tikzpicture}[scale=1.5]
			\square{%
				/square/label/.cd,
				01={}, 12={}, 23={$\simeq$}, 03={},
				023={$\simeq$}, 012={$\sig_0$}, 013={$\tau'$}, 
				123={$\simeq$},
				/square/arrowstyle/.cd,
				023={phantom, description},
				123={phantom, description},
				/square/labelstyle/.cd,
			}
			\end{tikzpicture}
		\end{nscenter}
		and so by Lemma~\ref{lem:closed-whiskering} we conclude that \(\sig_0\) is left \(p\)-inner/outer as well, as desired.
\end{proof}

\begin{proof}[Proof of Proposition~\ref{prop:invariance}]
We first note that since both vertical arrows are fibrant and cofibrant in the arrow category (with respect to the projective model structure), the existence of a levelwise equivalence from \(q\) to \(p\) implies the existence of a levelwise equivalence from \(p\) to \(q\). It will hence suffice to show that if \(p\) is a 2-inner/outer (co)cartesian fibration then so is \(q\). Now since weak equivalences between fibrant objects are preserved under base change along fibrations it follows from the 2-out-of-3 property that the map \(\D \to \E \times_{\B} \A\) is an equivalence of \(\infty\)-bicategories. The map \(\E' := \E \times_{\B} \A \to \A\) is then again a 2-inner/outer (co)cartesian fibration (Remark~\ref{rem:2-base-change}).
We now factor the weak equivalence \(\D \xrightarrow{\simeq} \E'\) as a composite
\[ \D \xrightarrow{\simeq} \D' \xrightarrow{\simeq} \E' \]
where the first map is a trivial cofibration and the second a trivial fibration. Then \(\D' \to \E' \to \A\) is a composite of a 2-inner/outer (co)cartesian fibration and a trivial fibration of scaled simplicial sets, and is hence itself again a 2-inner/outer (co)cartesian fibration. On the other hand, since \(\D \to \D'\) is a trivial cofibration
and \(\D \to \A\) is in particular a bicategorical fibration we may solve the lifting problem
\[ \xymatrix{
\D \ar@{=}[r] \ar[d] & \D \ar[d] \\
\D' \ar[r]\ar@{-->}[ur] & \A \\
}\]
so that we obtain a retract diagram of arrows
\[ \xymatrix{
\D \ar[d]^{q}\ar[r]^{i} & \D' \ar[d]^{q'}\ar[r]^{r} & \D \ar[d]^{q} \\
\A \ar@{=}[r] & \A \ar@{=}[r] & \A \ .
}\]
Now by Lemma~\ref{lem:invariance-1} the map \(\D' \to \D\) preserves (co)cartesian arrows over \(\A\), and hence the fact that \(\D'\) has a sufficient supply of \(q'\)-(co)cartesian edges implies that \(\D\) has a sufficient supply of \(q\)-(co)cartesian edges. It is thus left to show that \(\D\) also has a sufficient supply of left/right \(q\)-inner/outer triangles.
Solving a lifting problem of the form
\[ \xymatrix{
[\Del^1 \times \D] \coprod_{\partial \Del^1 \times \D'} [\partial \Del^1 \times \D] \ar[r] \ar[d] & \D' \ar[d]^{q'} \\
\D' \times \Del^1 \ar[r]\ar@{-->}[ur]^{H} & \A \\
}\]
we obtain a natural transformation \(H\) over \(\A\) from \(i \circ r\colon \D' \to \D'\) to the identity \(\id_{\D'}\),
whose restriction to \(\D\) is constant on \(i\).
Since \(i\) is a trivial cofibration it is in particular essentially surjective, and so the natural transformation \(H\) is levelwise invertible.
We now prove the inner case. Suppose given a triangle \(\sig\colon \Del^2 \to \A\) and a lift \(\rho\colon \Lam^2_1 \to \D\) of \(\sig|_{\Lam^2_1}\).
Since \(q'\) is a left/right inner fibration we may extend \(i\rho\) to a left/right \(q'\)-inner triangle \(\tau\colon \Del^2 \to \D'\) lifting \(\sig\).
Evaluating the levelwise invertible natural transformation \(H\) at \(\tau\) we obtain an equivalence \(ir\tau \Rightarrow \tau \)
covering the identity transformation on \(\tau\) and restricting to the identity transformation from \(i\rho\) to itself on \(\Lam^2_1\).
By Lemma~\ref{lem:invariance-triangle} we deduce that \(ir\tau\) is also \(q'\)-inner. Since \(i\) admits a retraction
and by the explicit description of left/right inner triangles in terms of lifting properties as in Remark~\ref{rem:unwinding}
we then deduce that \(r\tau\) itself is a left/right \(q\)-inner extension of \(\rho\) lifting \(\sig\).
In the outer case, the argument for the existence of a sufficient supply of left/right \(q\)-outer triangles is completely analogous,
but one also needs to show the closure of \(q\)-outer triangles under whiskering as required in Definition~\ref{cart outer fib defi}(2).
But this again follows from the corresponding property for \(q'\) and the fact that \(i\colon \D \to \D'\) detects left/right outer triangles,
as it admits a retraction over \(\A\).
\end{proof}

\subsection{Equivalences of cartesian fibrations}

This section is devoted to the proof of a fibre-wise criterion to test equivalences of fibrations.

\begin{thm}
	\label{fibrewise eq}
	Consider a morphism of 2-inner/outer (co)cartesian fibrations (in the sense of Definition~\ref{def:morphism}) given by a commutative diagram of the form
	\begin{equation}
	\label{fib eq square}
	\begin{tikzcd}
	\E \ar[d,"p"{swap}] \ar[r,"r"]&\E' \ar[d,"q"]\\
	\B \ar[r,"f"] & \B' \ .
	\end{tikzcd}
	\end{equation}
	where \(p,q\) are 2-inner/outer cartesian fibrations between \(\infty\)-bicategories. 
    Suppose that \(f\) is an equivalence of \(\infty\)-bicategories. Then \(r\) is an equivalence if and only if the induced map \(r_b\colon \E_b \to \E'_{f(b)}\) on the level of fibres is an equivalence of \(\infty\)-bicategories for all \(b \in \B\). 
\end{thm}

Before proving this, we need a preliminary result, generalizing 
\cite[Proposition 2.4.4.2]{HTT}. Here we make use of the results from~\cite{GagnaHarpazLanariLaxLimits} involving the slice \(\infty\)-bicategories associated with marked-scaled simplicial sets, see \S\ref{sec:join-and-slice} and Notation~\ref{not:slice-over-arrow}.

\begin{lemma}
	\label{characterization of fibres}
	Let \(p\colon \C \to \D\) be a 2-inner/outer cartesian fibration of \(\infty\)-bicategories.
    Let \(x,y \in \C\) be two objects, \(\bar{e}\colon px \to py\) an arrow between their images in \(\D\), and \(e\colon x' \to y\) a \(p\)-cartesian lift of \(\bar{e}\). 
    Then the (homotopy) fibre of the map 
    \[\phi\colon \uMaptr_{\C}(x,y)\to \uMaptr_{\D}(px,py)\]
    at \(\bar{e}\) is naturally equivalent to the mapping
    \(\infty\)-category \(\uMaptr_{\C_{px}}(x,x')\), where \(\C_{px}\) denotes the (homotopy) fibre of \(p\) over \(px\).
\end{lemma}

\begin{proof}
To begin with we note that  
the map \(\phi\) is a cartesian (or cocartesian in the outer case) fibration of \(\infty\)-categories by Proposition~\ref{prop:2-inner is locally cartesian} 
and hence the homotopy fibre in question is equivalent to the corresponding strict fibre.
    We now consider the following diagram 
\[\xymatrix{
    &\trbis{\ovl{\C}}{e^{\sharp}}\ar[dl]_{\simeq}\ar[dr]^{\simeq} & \\
\trbis{\ovl{\C}}{x'}\times_{\trbis{\ovl{\D}}{px'}}\trbis{\ovl{\D}}{\bar{e}^{\sharp}} &&     \trbis{\ovl{\C}}{y}\times_{\trbis{\ovl{\D}}{py}}\trbis{\ovl{\D}}{\bar{e}^{\sharp}}
}\] 
where the left diagonal map is a trivial fibration by \cite[Lemma 2.4.6]{GagnaHarpazLanariLaxLimits} and the right diagonal map is a trivial fibration by~\cite[Lemma 2.3.6]{GagnaHarpazLanariLaxLimits} and the assumption that \(e\) is \(p\)-cartesian (and is hence in particular weakly \(p\)-cartesian, in the sense of loc.~cit.).
Taking fibres over \((x,s_1(\bar{e}))\in \C\times_{\D}\trbis{\D}{\bar{e}^{\sharp}}\) we thus get a zig-zag of equivalences 
\[\xymatrix{
    &\ovl{\C}_{/e^{\sharp}} \times_{\C\times_{\D}\trbis{\D}{\bar{e}^{\sharp}}}\{(x,s_1(\bar{e}))\}  \ar[dl]_{\simeq}\ar[dr]^{\simeq} & \\
\uMaptr_{\C_{p(x)}}(x,x')_{\sharp}  &&      \phi^{-1}(\bar{e})_{\sharp}
}\]
relating the fibre of \(\phi\) over \(\bar{e}\) to \(\uMaptr_{\C_{p(x)}}(x,x')\), as desired.
\end{proof}

\begin{proof}[Proof of Theorem \ref{fibrewise eq}]
We prove the 2-inner cartesian case. The proof for the remaining three variance flavours proceeds in exactly the same manner.
	Let us begin by proving the ``only if'' direction of the statement. Every object in the square \eqref{fib eq square} is fibrant, and the vertical maps are fibrations by Proposition~\ref{cart are fib}, therefore every pullback along these maps is automatically a homotopy pullback. Consider the following commutative cube:
	\[
	\begin{tikzcd}[row sep=scriptsize, column sep=scriptsize]
	& \E_b \arrow[dl] \arrow[rr,"r_b"] \arrow[dd] & & \E'_{f(b)}\arrow[dl] \arrow[dd] \\
	\E \arrow[rr," \ \ \ r", crossing over] \arrow[dd,"p"{swap}]  & &  \E' \ar[dd,"\ q"{pos=0.7}, crossing over] \\
	& \Delta^0 \arrow[dl,"\{b\}"{swap}] \arrow[rr] & & \Delta^0 \arrow[dl,"\{f(b)\}"] \\
	\B \arrow[rr,"f"] & & \B' \arrow[from=uu, crossing over]\\
	\end{tikzcd}\]
	The front face and the side faces are homotopy pullbacks by assumption, so that the back one must also be such. Since the bottom horizontal map is an equivalence, the map \(r_b\) must be an equivalence as well.
	
	Assume now that \(r\) is a fibrewise equivalence, and let us prove it is essentially surjective on objects and fully faithful. To see that \(r\) is essentially surjective, factor it as a composite \(\E \to \E' \times_{\B'}\B \to \E'\), where the first map is essentially surjective since \(r_b\colon \E_b \to \E'_{fb}\) is essentially surjective for every \(b \in \B\) and the second is essentially surjective because \(f\colon \B \to \B'\) is so. 
	Concerning fully-faithfulness, we consider the following commutative square of \(\infty\)-categories:
	\begin{equation}
	\label{l}
	\begin{tikzcd}
	\uMaptr_{\E}(x,y)\ar[d,"p"{swap}] \ar[r,"r"]& \uMaptr_{\E'}(rx,ry) \ar[d,"q"]\\
	\uMaptr_{\B}(px,py) \ar[r,"f"] & \uMaptr_{\B'}(qrx,qry)
	\end{tikzcd}
	\end{equation}
	Here, we have used the fact that \(qr=fp\), and we denoted the induced action between the \(\infty\)-categories 
    of morphisms with the same letter as that of the map between the relevant \(\infty\)-bicategories.
    Now, the vertical maps are cartesian fibrations of \(\infty\)-categories by Proposition~\ref{prop:2-inner is locally cartesian}, 
    and the bottom horizontal map is an equivalence since \(f\) is fully-faithful. 
    Therefore, the top horizontal map is an equivalence if and only if the square \eqref{l} is a homotopy pullback, 
    which happens precisely if the maps induced between the (strict) fibres of the vertical maps are equivalences. 
    Thanks to Lemma \ref{characterization of fibres}, the fibre of the left-hand side map over \(\bar{e}\colon p(x)\to p(y)\) 
    coincides with \(\uMaptr_{\E_{px}}(x,x')\) for some \(p\)-cartesian 1-simplex \(e\colon x' \to y\) that lifts \(\bar{e}\). 
    Since \(r\) is assumed to preserve cartesian edges we have that \(re\colon rx'\to ry\) is a \(q\)-cartesian lift of \(f\bar{e}\), 
    and so the fibre of \(q\) over \(f(\bar{e})\) is given by \(\uMaptr_{\E'_{qrx}}(rx,rx')\). 
    We can now finish the proof by observing that the induced map 
    \[\uMaptr_{\E_{px}}(x,x')\to \uMaptr_{\E'_{qrx}}(rx,rx')\] 
    is an equivalence of \(\infty\)-categories, since \(\E_{px}\to \E'_{fpx}\simeq \E'_{qrx}\) is assumed to be an equivalence of \(\infty\)-bicategories.
\end{proof}

\section{The domain projection} 

In this section 
we analyse a key example of a 2-outer cartesian fibration: the domain projection \(\mathrm{d}\colon\RMap(\Delta^1,\C)\rightarrow \C\)
induced by the inclusion \(\{0\}\hookrightarrow\Delta^1\), where \(\RMap(\Del^1,\C)\) is the \(\infty\)-bicategory whose objects are
the arrows in \(\C\) and whose morphisms are the lax-commutative squares. It is characterized by the property that for every scaled simplicial set \((K,T_K)\), maps \((K,T_K) \to \RMap(\Del^1,\C)\) correspond to maps of scaled simplicial sets 
\[ \Del^1_{\flat} \otimes (K,T_K) \to \C ,\]
where \(\otimes\) denotes the \emph{Gray product} of scaled simplicial sets, see~\cite[Definition 2.1]{GagnaHarpazLanariGrayLaxFunctors}. Explicitly, \(\Del^1_{\flat} \otimes (K,T_K)\) is the scaled simplicial set whose underlying simplicial set is the cartesian product \(\Del^1 \times K\), and where a triangle \(\sig\colon \Del^2 \to \Del^1 \times K\) is thin if and only if its image in \(K\) belongs to \(T_K\) and either \(\sig|_{\Del^{\{0,1\}}}\) maps to a degenerate edge in \(K\) or \(\sig|_{\Del^{\{1,2\}}}\) maps to a degenerate edge in \(\Del^1\). Here, we have used the fact that all the triangles in \(\Del^1_{\flat}\) are thin, otherwise the additional condition of projecting to a thin triangle in the first factor would have been necessary.

\begin{rem}
 \label{rem:dom_projection_weak_fibration}
 By the main result of~\cite{GagnaHarpazLanariGrayLaxFunctors} the functor \(-\otimes -\) is a left Quillen bifunctor. It then follows that for any \(\infty\)-bicategory \(\C\) the domain projection
	\[
		\mathrm{d}\colon\RMap(\Delta^1,\C)\rightarrow \C.
	\]
is a bicategorical fibration of \(\infty\)-bicategories, 
and in particular a weak fibration (see Remark~\ref{rem:bicategorical-is-weak}).
\end{rem}

Our argument to show that the domain fibration is a 2-outer cartesian fibrations is based on the following extension lemma:
\begin{lemma}\label{lem:technical-part}
Let \(\C\) be an \(\infty\)-bicategory, and for \(n \geq 2\) suppose given an extension problem of the form
\[ \xymatrix{
\big([\Del^1 \times \Lam^n_n] \displaystyle\mathop{\coprod}_{\partial \Del^1\times \Lam^n_n} [\partial \Del^1\times \Del^n],T'\big) \ar[d]\ar[r]^-{\rho} & \C  \\
\big(\Del^1 \times \Del^n,T\big)\ar@{..>}[ur]  & \ .
}\]
where 
\[T = \{\Del^{\{(0,n-1),(0,n),(1,n)\}},\Del^{\{(0,0),(1,n-1),(1,n)\}}\}\] 
and \(T'\) its restriction to the top left corner. 
If \(\rho\) sends \(\Del^{\{1\}} \times \Del^{\{n-1,n\}}\) to an invertible edge in \(\C\) then the dotted extension exists.  
\end{lemma}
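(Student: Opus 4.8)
The plan is to exhibit the inclusion of the statement as a finite composite of pushouts of maps along which every $\infty$-bicategory admits extensions, and then to conclude using that $\C$ is an $\infty$-bicategory. The maps that occur are of two kinds: scaled inner horn inclusions (generating scaled anodyne maps as in Definition~\ref{d:anodyne}); and ``special outer horn'' inclusions $\Lam^m_m\subseteq\Del^m$, possibly with some faces already scaled, whose final edge $\Del^{\{m-1,m\}}$ is invertible. An $\infty$-bicategory admits extensions along the first kind by definition, and along the second because, as shown in~\cite{GagnaHarpazLanariEquiv}, the fibrant scaled simplicial sets are precisely those with the right lifting property against scaled anodyne maps together with $\{1\}\hookrightarrow J^{\sharp}$, the latter condition being equivalent to the ability to fill such invertible-final-edge outer horns. (One may package this by decorating $\Del^1\times\Del^n$ with the scaling $T$ and with the edge $\Del^{\{1\}}\times\Del^{\{n-1,n\}}$ marked --- the hypothesis on $\rho$ being exactly what promotes it to a map out of the decorated domain --- and checking that the resulting inclusion is a trivial cofibration of marked-scaled simplicial sets, whence fibrancy of $\C$ gives the lift.)

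The filtration I have in mind uses the shuffle decomposition of $\Del^1\times\Del^n$, whose nondegenerate top cells are the shuffles $\sigma_0,\dots,\sigma_n$ with $\sigma_i$ spanned by $(0,0),\dots,(0,i),(1,i),\dots,(1,n)$ and $\sigma_{i-1}\cap\sigma_i=d_i\sigma_{i-1}=d_i\sigma_i$; write $\tau_i:=d_{n+1}\sigma_i$. The key combinatorial observation is that a face of $\Del^1\times\Del^n$ lies outside the domain $[\Del^1\times\Lam^n_n]\coprod_{\partial\Del^1\times\Lam^n_n}[\partial\Del^1\times\Del^n]$ precisely when it meets both $\{0\}\times\Del^n$ and $\{1\}\times\Del^n$ and its image in $\Del^n$ contains $\{0,\dots,n-1\}$. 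One then proceeds in three phases. \emph{Phase A:} attach $\sigma_n$ by filling the inner horn $\Lam^{n+1}_n\subseteq\sigma_n$, whose distinguished thin face $\Del^{\{n-1,n,n+1\}}$ is the triangle $\Del^{\{(0,n-1),(0,n),(1,n)\}}\in T$ --- a pushout of a scaled inner horn inclusion; this step also produces the $n$-simplex $\sigma_{n-1}\cap\sigma_n$. \emph{Phase B:} for $i=n-1,\dots,1$ attach the $n$-simplex $\sigma_{i-1}\cap\sigma_i$; by the combinatorial observation all of its facets except $d_n(\sigma_{i-1}\cap\sigma_i)$ already lie in the domain, and its final edge is the invertible edge $\Del^{\{1\}}\times\Del^{\{n-1,n\}}$, so this is a pushout of an invertible-final-edge outer horn inclusion (the thin triangle $\Del^{\{(0,0),(1,n-1),(1,n)\}}\in T$ appears as a facet of this horn and is either already scaled or, when $n=2$, realized at this step). \emph{Phase C:} for $i=n-1,\dots,0$ attach $\sigma_i$; by construction all of its facets except $\tau_i$ are now present, and its final edge $\Del^{\{n,n+1\}}$ equals $\Del^{\{1\}}\times\Del^{\{n-1,n\}}$, so this too is a pushout of an invertible-final-edge outer horn inclusion. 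For $n=1$ only Phases A and C occur, with two stages in total.

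I expect the main work --- and the only genuinely delicate point --- to be verifying the combinatorial bookkeeping of Phases B and C: that at each stage exactly one facet of the simplex being attached is missing (so that one really is filling a horn), that the produced faces chain together across the phases as indicated, and that the relevant scalings are inherited correctly. One also needs the (standard, but worth isolating) fact that an $\infty$-bicategory admits extensions along an outer horn $\Lam^m_m\subseteq\Del^m$ whose final edge is invertible even when some faces of the horn are already marked thin; this follows from $\C$ being fibrant in the bicategorical model structure via~\cite{GagnaHarpazLanariEquiv}. Alternatively, one could try to derive the lemma directly from the fact, proved in~\cite{GagnaHarpazLanariGrayLaxFunctors} and already invoked in Remark~\ref{rem:dom_projection_weak_fibration}, that the Gray tensor product is a left Quillen bifunctor, by recognizing the decorated inclusion above as a pushout-product of a cofibration with a trivial cofibration of marked-scaled simplicial sets; making that identification precise is then the crux of that route.
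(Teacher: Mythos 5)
Your proposal is correct and takes essentially the same route as the paper's own proof: the identical shuffle filtration of \(\Del^1\times\Del^n\), realized as one pushout of a scaled inner horn (for \(\sigma_n\)) followed by pushouts of outer horns \(\Lam^m_m\) whose final edge is the invertible edge \(\Del^{\{1\}}\times\Del^{\{n-1,n\}}\) and whose distinguished triangle lies in \(T\), filled using the lifting properties of \(\infty\)-bicategories. The only difference is the (immaterial) order of attachment: the paper adjoins the intersections \(\sigma_{i-1}\cap\sigma_i\) before \(\sigma_n\) rather than after, and the combinatorial bookkeeping you flag as the delicate point indeed checks out exactly as you describe.
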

\begin{proof}
We define a sequence of scaled maps
	\[
		\tau_1,\ldots,\tau_{n-1}\colon (\Delta^n,\Delta^{\{0,n-1,n\}}) \rightarrow \bigl(\Delta^1\times \Delta^n, T\bigr)
	\]
	in the following manner:
	\[
	\tau_i(j) = \begin{cases}
	(0,j)& j<i\\
	(1,j) & j \geq i
	\end{cases}
	\]
	Set
	\(K_0 := \big([\Del^1 \times \Lam^n_n] \coprod_{\partial \Del^1\times \Lam^n_n} [\partial \Del^1\times \Del^n],T'\big)\)
and \(K_{i} := K_{i-1}\cup  \tau_{i}\), for \(1\leq i \leq n-1\), so that we have pushout diagrams of the form:
	\[
	\begin{tikzcd}
	(\Lambda^n_n, \{\Delta^{\{0,n-1,n\}}\} )\ar[d]\ar[r] &K_{i-1}\ar[d]\\
	(\Delta^n, \{\Delta^{\{0,n-1,n\}}\}) \ar[r,"\tau_i"]&K_i
	\end{tikzcd}
	\]
	for every \(1\leq i \leq n-1\). These all have the property that the edge \(\Delta^{\{n-1,n\}}\) is mapped to an equivalence in \(\C\). 
    By~\cite[Corollary 2.3.10]{GagnaHarpazLanariLaxLimits}, all these maps admit lifts against \(\C\), and so we can extend the map \(\rho\) to a map \(\rho'\colon K_{n-1} \rightarrow \C\).  
	Consider now the \((n+1)\)-simplices
	\(\sigma_0,\ldots,\sigma_{n}\colon \Delta^{n+1}\rightarrow \Delta^1\times \Delta^n\) defined as follows:
	\[
		\sigma_i(j)=
			\begin{cases}
				(0,j) & j\leq i\\
				(1,j-1) & j>i.
			\end{cases}
	\]
	Observe that \(\sigma_{0},\ldots,\sigma_{n-1}\) can be promoted to maps \[(\Delta^{n+1},\{\Delta^{\{0,n,n+1\}}\})\to \bigl(\Delta^1\times \Delta^n, T\bigr)\] whereas \(\sigma_n\) can be promoted to a map \[(\Delta^{n+1}, \{\Delta^{\{n-1,n,n+1\}}\})\to \bigl(\Delta^1\times \Delta^n, T\bigr)\]
	We now set \(K_{n+k} = K_{n-1+k}\cup \sigma_{n-k}\) for \(0\leq k \leq n\), and observe that \(K_{2n}=\bigl(\Delta^1\times \Delta^n, T\bigr)\).
We then have a pushout diagram of the form:
	\begin{equation*}
	\begin{tikzcd}
	(\Lambda^{n+1}_n, \{\Delta^{\{n-1,n,n+1\}}\}|_{\Lambda^{n+1}_n}) \ar[d]\ar[r] &K_{n-1}\ar[d]\\
	(\Delta^{n+1}, \{\Delta^{\{n-1,n,n+1\}}\})\ar[r,"\sigma_n"]&K_{n}
	\end{tikzcd}
	\end{equation*}
	and, for every \(k>0\), there are pushout diagrams of the form:
		\begin{equation*}
	\begin{tikzcd}
	(\Lambda^{n+1}_{n+1}, \{\Delta^{\{0,n,n+1\}}\})\ar[d]\ar[r] &K_{n-1+k}\ar[d]\\
	(\Delta^{n+1}, \{\Delta^{\{0,n,n+1\}}\})\ar[r,"\sigma_{n-k}"]&K_{n+k}
	\end{tikzcd}
	\end{equation*}
where the corresponding image of the edge \(\Delta^{\{n,n+1\}}\) in \(\C\) is an equivalence. As before, this is enough to prove the existence of an extension to \(\C\).
\end{proof}

\begin{rem}\label{rem:dual-form}
Applying Lemma~\ref{lem:technical-part} to \(\C^{\op}\) one obtains the following dual form of its statement, which we spell out for the convenience of the reader:
suppose given an extension problem of the form
\[ \xymatrix{
\big([\Del^1 \times \Lam^n_0] \displaystyle\mathop{\coprod}_{\partial \Del^1\times \Lam^n_0} [\partial \Del^1\times \Del^n],T'\big) \ar[d]\ar[r]^-{\rho} & \C  \\
\big(\Del^1 \times \Del^n,T\big)\ar@{-->}[ur]  & \ .
}\]
where 
\[T = \{\Del^{\{(0,0),(1,0),(1,1)\}},\Del^{\{(0,0),(0,1),(1,n)\}}\}\] 
and \(T'\) its restriction to the top left corner. 
If \(\rho\) sends \(\Del^{\{1\}} \times \Del^{\{0,1\}}\) to an invertible edge in \(\C\) then the dotted lift exists.  
\end{rem}

We now consider the question of cartesian 1-simplices for the domain projection.

\begin{lemma}
	\label{cart lift of dom n=1}
	Let \(\C\) be an \(\infty\)-bicategory and consider the domain projection \(\dom\colon \RMap(\Del^1,\C) \to \C\) as above.
	Then a 1-simplex \(\alpha\colon \Delta^1 \rightarrow \RMap(\Delta^1,\C)\) is \(d\)-cartesian if 
	its transpose \(\hat{\alpha}\colon \Delta^1\otimes \Delta^1\to\C\) corresponds to a commutative square (i.e.~both its non-degenerate triangles are thin) with the side \(\hat{\alpha}(\{1\}\times \Delta^1)\) being an equivalence in \(\C\). Pictorially, \(\hat{\alpha}\) looks as follows:
	\[
		\begin{tikzcd}
		a\ar[dr,"k"{description,name=k'}]\ar[d,"g"{swap}]\ar[r]& x\ar[d,"\simeq"]\\
		b\ar[r,"l"{swap}] \ar[Rightarrow,"\simeq"{phantom}, from=k',to=2-1]\ar[Rightarrow, "\simeq", from=k',to=1-2]& y
		\end{tikzcd}
    \]
\end{lemma}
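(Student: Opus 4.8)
The plan is to unwind the definition of a $d$-cartesian edge (Definition \ref{d:cartesian}) and reduce the resulting lifting problems, by transposing across the Gray-tensor adjunction $\Delta^1 \otimes (-) \dashv \RMap(\Delta^1,-)$, to the extension statement already isolated in Lemma \ref{lem:technical-part}. Since $\dom$ is a weak fibration (Remark \ref{rem:dom_projection_weak_fibration}), showing that $\alpha$ is $d$-cartesian amounts to solving, for every $n \geq 2$, a lifting problem
\[
\begin{tikzcd}
(\Lambda^n_n,\{\Delta^{\{0,n-1,n\}}\}|_{\Lambda^n_n}) \ar[d] \ar[r,"\sigma"] & \RMap(\Delta^1,\C) \ar[d,"\dom"]\\
(\Delta^n,\{\Delta^{\{0,n-1,n\}}\}) \ar[r,"\tau"] & \C
\end{tikzcd}
\]
with $\sigma|_{\Delta^{\{n-1,n\}}} = \alpha$. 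Using that $\dom$ is restriction along $\{0\}\hookrightarrow\Delta^1$, the transpose of this square is the problem of extending a map
\[ \rho_0 \colon (\Delta^1\otimes\Lambda^n_n)\coprod_{\{0\}\otimes\Lambda^n_n}(\{0\}\otimes\Delta^n) \lrar \C \]
to a map out of $\Delta^1\otimes(\Delta^n,\{\Delta^{\{0,n-1,n\}}\})$, and $\rho_0$ restricted to the sub-bisimplex $\Delta^1\otimes\Delta^{\{n-1,n\}}$ is exactly the transpose $\hat\alpha$ of $\alpha$.

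The domain of $\rho_0$ differs from the one appearing in Lemma \ref{lem:technical-part} only in that it is missing the slab $\{1\}\otimes\Delta^n$, so the first step is to extend $\rho_0$ over it. Restricted to $\{1\}\otimes\Lambda^n_n \cong \Lambda^n_n$, the map $\rho_0$ sends the edge $\Delta^{\{n-1,n\}}$ to $\hat\alpha|_{\{1\}\otimes\Delta^{\{n-1,n\}}}$, which is the side $\hat\alpha(\{1\}\times\Delta^1)$ and hence an equivalence in $\C$ by hypothesis; by \cite[Corollary~2.3.10]{GagnaHarpazLanariLaxLimits} the corresponding special outer horn can be filled in the $\infty$-bicategory $\C$, giving the desired extension over $\{1\}\otimes\Delta^n$. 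After this step $\rho_0$ is defined on
\[ [\Delta^1\times\Lambda^n_n]\coprod_{\partial\Delta^1\times\Lambda^n_n}[\partial\Delta^1\times\Delta^n], \]
and, using that both non-degenerate triangles of the \emph{commutative} square $\hat\alpha$ are thin (together with the thinness of $\sigma$ on $\Delta^{\{0,n-1,n\}}$), one checks that $\rho_0$ is compatible with the scaling $T$ of Lemma \ref{lem:technical-part} restricted to this subobject, while $\rho_0$ sends $\Delta^{\{1\}}\times\Delta^{\{n-1,n\}}$ to an invertible edge. Lemma \ref{lem:technical-part} then produces an extension $\tilde\rho\colon(\Delta^1\times\Delta^n,T)\to\C$; since the scaling $T$ was chosen precisely so that a $T$-respecting map transposes to a scaled map $(\Delta^n,\{\Delta^{\{0,n-1,n\}}\})\to\RMap(\Delta^1,\C)$, transposing $\tilde\rho$ back yields the required lift.

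The genuine combinatorial content — threading an explicit filtration of $\Delta^1\otimes\Delta^n$ by horns whose terminal edge is invertible — has already been carried out in Lemma \ref{lem:technical-part}, and the slab extension is a direct citation of \cite[Corollary~2.3.10]{GagnaHarpazLanariLaxLimits}. The main obstacle I therefore expect is the bookkeeping around the transposition: verifying that the transpose of the $d$-cartesian lifting problem matches, after filling the slab $\{1\}\otimes\Delta^n$, the precise scaled shape of Lemma \ref{lem:technical-part}. Here the hypothesis that $\hat\alpha$ is a commutative square is what forces the two triangles of $T$ lying in the source to be sent to thin triangles, and the hypothesis that $\hat\alpha(\{1\}\times\Delta^1)$ is an equivalence is used both to fill the slab and to supply the invertibility assumption of that lemma — so both clauses of the hypothesis enter exactly at the point where the reduction is made.
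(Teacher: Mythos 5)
Your proposal follows essentially the same route as the paper: unwind Definition~\ref{d:cartesian}, transpose across \(\Delta^1\otimes(-)\dashv\RMap(\Delta^1,-)\), fill the slab \(\Delta^{\{1\}}\times\Delta^n\) using the invertibility of \(\hat{\alpha}(\{1\}\times\Delta^1)\), and then invoke Lemma~\ref{lem:technical-part}. That is exactly the paper's strategy, and the reduction is set up correctly.

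There is, however, one concrete step you elide that does not come for free, namely your claim that ``Lemma~\ref{lem:technical-part} then produces an extension \(\tilde\rho\colon(\Delta^1\times\Delta^n,T)\to\C\).'' Lemma~\ref{lem:technical-part} only produces an extension respecting the scaling \(S = T''\cup\{\Delta^{\{(0,n-1),(0,n),(1,n)\}},\Delta^{\{(0,0),(1,n-1),(1,n)\}}\}\), where \(T''\) is the set of triangles of \(T\) already contained in the subobject \([\Delta^1\times\Lambda^n_n]\coprod_{\partial\Delta^1\times\Lambda^n_n}[\partial\Delta^1\times\Delta^n]\). For \(n\geq 3\) that subobject contains every triangle of \(\Delta^1\times\Delta^n\), so \(S=T\) and your conclusion is immediate; but for \(n=2\) the triangle \(\Delta^{\{(0,0),(1,0),(1,1)\}}\) belongs to \(T\) and is \emph{not} in \(S\), so the extension is not a priori a scaled map into \(\C\) over the full target \((\Delta^1\times\Delta^2,T)\), and the transpose is not yet the required scaled lift \((\Delta^2,\{\Delta^{\{0,1,2\}}\})\to\RMap(\Delta^1,\C)\). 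The paper closes this by a separate argument: since \(S\) contains \(\Delta^{\{(0,0),(1,1),(1,2)\}}\), \(\Delta^{\{(0,0),(1,0),(1,2)\}}\) and \(\Delta^{\{(1,0),(1,1),(1,2)\}}\), and \(\Delta^{\{(1,1),(1,2)\}}\) is sent to an invertible edge, \cite[Proposition~3.4]{GagnaHarpazLanariEquiv} forces the remaining triangle to be thin. You should add this verification; everything else in your argument matches the paper.
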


\begin{proof}
	Consider a 1-simplex \(\alpha\colon \Delta^1\rightarrow \RMap(\Delta^1,\C)\),  as described above. Given \(n\geq 2\) and a solid square of the form:
	\[\begin{tikzcd}
	& \Delta^{\{n-1,n\}} \ar[dl] \ar[dr, "\alpha"]\\
	(\Lambda^n_n,\Lambda^n_n\cap\Delta^{\{0,n-1,n\}}) \ar[rr] \ar[d]& &\RMap(\Delta^1,\C) \ar[d,"\mathrm{d}"]\\
	(\Delta^n,\Delta^{\{0,n-1,n\}})\ar[rr]\ar[urr,dotted]&  &\C
	\end{tikzcd}\] we have to exhibit a filler as indicated by the dotted arrow. Note that this lifting problem corresponds to one of the form:
	\begin{equation}
	\label{1}
	\begin{tikzcd}
	\big(\Delta^1 \times \Lambda^n_n\displaystyle\mathop{\coprod}_{\Del^{\{0\}} \times \Lam^n_n} \Del^{\{0\}}\times 	\Delta^n,T'\big)\ar[r,"f"]\ar[d]& \C\\
	(\Delta^1\times \Delta^n,T) \ar[ur,dotted]
	\end{tikzcd}
	\end{equation}
	in which \(\Delta^1\times \Delta^{\{n-1,n\}}\) is mapped to \(\hat{\alpha}\) under the adjunction \(\Delta^1\otimes- \dashv \RMap(\Delta^1,-)\). 
	Here \(T\) is the union of the triangles which are thin in \(\Delta^1_{\flat}\otimes (\Delta^n,\{\Delta^{\{0,n-1,n\}}\})\), together with \(\Delta^{\{(0,n-1),(0,n),(1,n)\}}\),  
	and \(T'\subseteq T\) is the subset of those triangles which are contained in the domain of the vertical arrow in~\eqref{1}.
	Since the edge \(\Del^{\{1\}} \times \Del^{\{n-1,n\}}\) maps to an invertible edge of \(\C\) by our assumption on \(\alp\) and \(T\) contains \(\Delta^{\{1\}} \times \Delta^{\{0, n-1, n\}}\)
	we can extend the map \(f\) in~\eqref{1}
	to a map 
\[	f'\colon \big([\Delta^1 \times \Lambda^n_n]\displaystyle\mathop{\coprod}_{\partial \Del^1 \times \Lam^n_n} [\partial \Del^1\times 	\Delta^n],T''\big) \to \C\]
	where \(T''\) is the intersection of \(T\) with the triangles in \([\Delta^1 \times \Lambda^n_n]\displaystyle\mathop{\coprod}_{\partial \Del^1 \times \Lam^n_n} [\partial \Del^1\times 	\Delta^n]\)
Let \(S = T'' \cup \{\Del^{\{(0,n-1),(0,n),(1,n)\}},\Del^{\{(0,0),(1,n-1),(1,n)\}}\}\). Then \(S\) is contained in \(T\) and by Lemma~\ref{lem:technical-part} we may extend \(f'\) to \((\Del^1 \times \Del^n,S)\). When \(n \geq 3\) we have that \(T=T''=S\) and so the proof is complete. In the case \(n=2\) one needs to additionally verify that the resulting extension sends \(\Del^{\{(0,0),(1,0),(1,1)\}}\) to a thin triangle.
Indeed, this follows from~\cite[Proposition~3.4]{GagnaHarpazLanariEquiv} since \(S\) contains \(\Del^{\{(0,0),(1,1),(1,2)\}}, \Del^{\{(0,0),(1,0),(1,2)\}}\)
and \(\Del^{\{(1,0),(1,1),(1,2)\}}\), and \(\Del^{\{(1,1),(1,2)\}}\) maps to an invertible edge in \(\C\) by assumption.
\end{proof}

\begin{lemma}
	\label{lemma:dom_projection_enough_1-cartesian}
	The domain projection
	\[
		\mathrm{d}\colon\RMap(\Delta^1,\C)\rightarrow \C.
	\]
	of an \(\infty\)-bicategory \(\C\) has enough cartesian edges.
\end{lemma}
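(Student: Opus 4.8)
The plan is to deduce the statement from the sufficient criterion for \(\dom\)-cartesianness provided by Lemma~\ref{cart lift of dom n=1}. Recall that an object of \(\RMap(\Del^1,\C)\) is the same datum as a \(1\)-simplex \(\phi\colon e_0\to e_1\) of \(\C\); write \(e\) for such an object, so that \(\dom(e)=e_0\). Given a \(1\)-simplex \(h\colon a\to e_0\) of \(\C\), we must produce a \(\dom\)-cartesian edge \(\bar h\colon e'\to e\) of \(\RMap(\Del^1,\C)\) with \(\dom(\bar h)=h\). Transposing along the adjunction \(\Del^1\otimes-\dashv\RMap(\Del^1,-)\) and using the description of \(\dom\), this amounts to producing a map \(\hat{\bar h}\colon\Del^1\otimes\Del^1\to\C\) whose underlying square is commutative (both non-degenerate triangles thin in \(\C\)), whose \(\{1\}\times\Del^1\)-side is an equivalence, whose \(\{0\}\times\Del^1\)-side equals \(h\), and whose \(\Del^1\times\{1\}\)-side equals \(\phi\): the first two properties make the adjoint edge \(\bar h\) a \(\dom\)-cartesian edge by Lemma~\ref{cart lift of dom n=1}, the third ensures \(\dom(\bar h)=h\), and the fourth ensures \(\bar h(1)=e\).

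To construct \(\hat{\bar h}\), I would first use that \(\C\) is an \(\infty\)-bicategory to fill the scaled inner horn \(\Lam^2_1\to\C\) spanned by the composable pair \(h,\phi\), obtaining a thin triangle \(\tau\colon\Del^2\to\C\) with \(\tau|_{\Del^{\{0,1\}}}=h\) and \(\tau|_{\Del^{\{1,2\}}}=\phi\); set \(k:=\tau|_{\Del^{\{0,2\}}}\colon a\to e_1\). The underlying simplicial set of \(\Del^1\otimes\Del^1\) is \(\Del^1\times\Del^1\), the union of the two non-degenerate \(2\)-simplices \(\Del^{\{(0,0),(0,1),(1,1)\}}\) and \(\Del^{\{(0,0),(1,0),(1,1)\}}\) along the diagonal \(\Del^{\{(0,0),(1,1)\}}\). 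I would let \(\hat{\bar h}\) restrict to \(\tau\) on the first of these and to the degenerate triangle \(s_1(k)\) on the second; the two agree on the shared diagonal, which they both restrict to \(k\), so \(\hat{\bar h}\) is a well-defined map of simplicial sets. Since \(\tau\) is thin by construction and \(s_1(k)\) is degenerate, hence thin, \(\hat{\bar h}\) carries both non-degenerate triangles of the square to thin triangles, so it is a genuine map of scaled simplicial sets out of \(\Del^1\otimes\Del^1\), whichever of the two triangles happens to be scaled there.

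It then only remains to read off the outer edges: the \(\{1\}\times\Del^1\)-side of \(\hat{\bar h}\) is the \(\Del^{\{1,2\}}\)-face of \(s_1(k)\), i.e.\ the degenerate edge \(\id_{e_1}\), which is invertible; the \(\{0\}\times\Del^1\)-side is the \(\Del^{\{0,1\}}\)-face of \(\tau\), namely \(h\); and the \(\Del^1\times\{1\}\)-side is the \(\Del^{\{1,2\}}\)-face of \(\tau\), namely \(\phi\). Hence Lemma~\ref{cart lift of dom n=1} applies, the adjoint edge \(\bar h\colon\Del^1\to\RMap(\Del^1,\C)\) is \(\dom\)-cartesian, \(\dom(\bar h)=h\) and \(\bar h(1)=e\), so \(\bar h\colon e'\to e\) — with source \(e'\) the object \(k\colon a\to e_1\) — is the desired cartesian lift. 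As \(\dom\) is a weak fibration by Remark~\ref{rem:dom_projection_weak_fibration}, this shows that it has enough cartesian edges. There is no serious obstacle here: the argument is essentially a bookkeeping exercise once Lemmas~\ref{lem:technical-part} and~\ref{cart lift of dom n=1} are available, and the only delicate point, which I would take care to spell out, is keeping the coordinate conventions consistent across the Gray product \(\Del^1\otimes\Del^1\), the transposition \(\Del^1\otimes-\dashv\RMap(\Del^1,-)\), and the domain projection \(\dom\).
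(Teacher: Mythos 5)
Your construction is exactly the paper's: extend the composable pair to a thin triangle via an inner horn filler, glue it to the degenerate triangle on the composite to form the square with identity right-hand side, and invoke Lemma~\ref{cart lift of dom n=1}. The proposal is correct and matches the paper's proof, just with the coordinate bookkeeping spelled out more explicitly.
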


\begin{proof}
	A lifting problem of the form
	\[
		\begin{tikzcd}
		\Delta^{\{1\}} \ar[r,"h"] \ar[d]& \RMap(\Delta^1,\C) \ar[d,"d"]\\
		\Delta^1 \ar[r,"f"]&\C
		\end{tikzcd}
	\]
	corresponds to the following data in \(\C\):
	\[
		\begin{tikzcd}
		x\ar[d,"f"{swap}]\\
		y\ar[r,"h"]&z 
		\end{tikzcd}.
	\]
	This can be extended to a 1-simplex in \(\RMap(\Delta^1,\C)\) as depicted below
	\[
		\begin{tikzcd}[column sep=large, row sep=large]
		x\ar[d,"f"{swap}] \ar[r,"hf"] \ar[dr,"hf"{description, name=hf}] &
		z\ar[d, equal] \\
		y \ar[r,"h"{below}] & z
		\ar[phantom, from=hf, to=2-1, "\simeq"]
		\ar[phantom, from=hf, to=1-2, "="]
		\end{tikzcd}\ ,
	\]
	where bottom triangle is given by an extension along
	$\Lambda^2_1 \to \Delta^2_{\sharp}$ and the upper triangle
	is degenerate. Such a 1-simplex is then \(\dom\)-cartesian by Lemma~\ref{cart lift of dom n=1}.
\end{proof}

\begin{lemma}
	\label{cart lift of dom n=2}
	Let \(\alpha\colon \Delta^2 \rightarrow \RMap(\Delta^1,\C)\) be a triangle with transpose \(\hat{\alpha}\colon \Delta^1 \otimes \Delta^2 \to \C\).
	\begin{enumerate} 
		\item If \(\hat{\alp}|_{\Del^{\{1\}} \times \Del^2}\) is thin and \(\alp|_{\Del^{\{1,2\}}}\) satisfies the assumption of Lemma~\ref{cart lift of dom n=1} (so that it is \(\dom\)-cartesian by that lemma) then \(\alp\) is right \(\dom\)-outer.
		\item If \(\hat{\alp}|_{\Del^{\{1\}} \times \Del^2}\) is thin and \(\alp|_{\Del^{\{0,1\}}}\) is invertible then \(\alp\) is left \(\dom\)-outer. 
	\end{enumerate}
\end{lemma}

\begin{proof}
We first prove (1). By Remark~\ref{rem:unwinding} we need to consider a lifting problem of the form:
	\[\begin{tikzcd}
	& \Delta^{\{0,n-1,n\}}_{\flat} \ar[dl] \ar[dr, "\alpha"]\\
	(\Lambda^{n}_{n})_{\flat} \ar[rr] \ar[d]& &\RMap(\Delta^1,\C) \ar[d,"\mathrm{d}"]\\
	\Delta^n_{\flat} \ar[rr]\ar[urr,dotted]&  &\C
	\end{tikzcd}\] for \(n\geq 3\), where we have to exhibit a diagonal filler as indicated by the dotted arrow.
	We obtain an equivalent lifting problem of the form:
	\[
		\begin{tikzcd}
	\big(\Delta^1 \times \Lambda^n_n\displaystyle\mathop{\coprod}_{\Del^{\{0\}} \times \Lam^n_n} \Del^{\{0\}}\times 	\Delta^n,T\big)\ar[r,"f"]\ar[d]& \C\\
	(\Delta^1\times \Delta^n,T) \ar[ur,dotted]
	\end{tikzcd}
\]
where \(T\) is the set of thin triangles in \(\Del^1_{\flat} \otimes \Del^n_\flat\) together with \(\Delta^{\{(0,n-1),(0,n),(1,n)\}}\) and \(\Del^{\{1\}} \times \Del^{\{0,n-1,n\}}\) 
(all of whom are contained in the top left corner, since \(n \geq 3\)). 
    Our assumption on \(\alp\) implies that the edge \(\Delta^{\{1\}} \times \Delta^{\{n-1, n\}}\) is invertible in \(\C\)  
    and so by~\cite[Corollary 2.3.10]{GagnaHarpazLanariLaxLimits} we can extend the map \(f\) to a map
\[	f'\colon \big(\Delta^1 \times \Lambda^n_n\displaystyle\mathop{\coprod}_{\partial \Del^1 \times \Lam^n_n} \partial \Del^1\times 	\Delta^n,T\big) \to \C.\]
Now since \(T\) contains \(\Del^{\{(0,0),(1,0),(1,n-1)\}}\), \(\Del^{\{(0,0),(1,0),(1,n)\}}\) and \(\Del^{\{1\}} \times \Del^{\{0,n-1,n\}}\)  
and \(\C\) is an \(\infty\)-bicategory, 
the map \(f'\) must also send \(\Del^{\{(0,0),(1,n-1),(1,n)\}}\) to a thin triangle (see~\cite[Remark~3.1.4]{LurieGoodwillie}).
We may consequently apply Lemma~\ref{lem:technical-part} in order to extend \(f'\) to all of \((\Del^1 \times \Del^n,T)\), as desired.

We now prove (2). By Remark~\ref{rem:unwinding} we now need to consider a lifting problem of the form:
	\[\begin{tikzcd}
	& \Delta^{\{0,1,n\}}_{\flat} \ar[dl] \ar[dr, "\alpha"]\\
	(\Lambda^{n}_{0})_{\flat} \ar[rr] \ar[d]& &\RMap(\Delta^1,\C) \ar[d,"\mathrm{d}"]\\
	\Delta^n_{\flat} \ar[rr]\ar[urr,dotted]&  &\C
	\end{tikzcd}\] 
	with \(n \geq 3\). 
	We obtain an equivalent lifting problem of the form:
	\[
		\begin{tikzcd}
	\big(\Delta^1 \times \Lambda^n_0\displaystyle\mathop{\coprod}_{\Del^{\{0\}} \times \Lam^n_0} \Del^{\{0\}}\times 	\Delta^n,T\big)\ar[r,"f"]\ar[d]& \C\\
	(\Delta^1\times \Delta^n,T) \ar[ur,dotted]
	\end{tikzcd}
\]
where \(T\) is the set of thin triangles in \(\Del^1_{\flat} \otimes \Del^n_\flat\) together with \(\Delta^{\{(0,0),(0,1),(1,1)\}}\) and \(\Delta^{\{1\}} \times \Delta^{\{0, 1, n\}}\).
Since the image of the edge \(\Delta^{\{1\}} \times \Delta^{\{0, 1\}}\) is invertible in \(\C\) 
we can extend the map \(f\) to a map
\[	f'\colon \big([\Delta^1 \times \Lambda^n_0]\displaystyle\mathop{\coprod}_{\partial \Del^1 \times \Lam^n_0} [\partial \Del^1\times 	\Delta^n],T\big) \to \C.\]
Now since \(T\) contains \(\Del^{\{(0,0),(1,0),(1,1)\}}\), \(\Del^{\{(0,0),(1,0),(1,n)\}}\) and \(\Del^{\{1\}} \times \Del^{\{0,1,n\}}\)  
and \(\C\) is an \(\infty\)-bicategory,
the map \(f'\) must also send \(\Del^{\{(0,0),(1,1),(1,n)\}}\) to a thin triangle. Since \(f'\) also sends
 \(\Del^{\{(0,0),(0,1),(1,1)\}}\) and \(\Del^{\{(0,1),(1,1),(1,n)\}}\) to thin triangles, the same holds for \(\Del^{\{(0,0),(0,1),(1,n)\}}\). We may consequently apply the dual form of Lemma~\ref{lem:technical-part} (see Remark~\ref{rem:dual-form}), in order to extend \(f'\) to all of \((\Del^1 \times \Del^n,T)\), as desired.
\end{proof}	

Finally, we are ready to prove the main result of this section.

\begin{thm}
\label{thm:domain proj}
Given an \(\infty\)-bicategory \(\C\), the domain projection 
\[\mathrm{d}\colon\RMap(\Delta^1,\C)\rightarrow \C\] is a 2-outer cartesian fibration, whose \(p\)-cartesian 1-simplices 
are those described in Lemma~\ref{cart lift of dom n=1}. In addition, the right \(p\)-outer triangles whose right legs are cartesian are those described in Lemma~\ref{cart lift of dom n=2}(1), and the left \(p\)-outer triangles 
whose left leg is invertible  
are those described in Lemma~\ref{cart lift of dom n=2}(2).
\end{thm}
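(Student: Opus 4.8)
The plan is to verify, one at a time, the three conditions of Definition~\ref{cart outer fib defi} for $p = \mathrm{d}$, using the preceding lemmas for most of the work, and then to promote the ``if'' statements of Lemmas~\ref{cart lift of dom n=1} and~\ref{cart lift of dom n=2} to the characterizations asserted here. By Remark~\ref{rem:dom_projection_weak_fibration} the map $\mathrm{d}$ is a weak fibration, and by Lemma~\ref{lemma:dom_projection_enough_1-cartesian} it has enough $\mathrm{d}$-cartesian $1$-simplices, so condition~(3) of Definition~\ref{cart outer fib defi} holds at once.

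For condition~(1), that $\mathrm{d}$ is both a left and a right outer $2$-fibration, we use Remark~\ref{rem:only-degenerate} to reduce to producing left/right $\mathrm{d}$-outer lifts of left/right degenerate triangles. In the right degenerate case we are given $\sig\colon \Del^2 \to \C$ with $\sig|_{\Del^{\{1,2\}}}$ degenerate together with a lift $\rho\colon \Lam^2_2 \to \RMap(\Del^1,\C)$ of $\sig|_{\Lam^2_2}$ whose $\Del^{\{1,2\}}$-leg is $\mathrm{d}$-cartesian, hence, by Lemma~\ref{cart lift of dom n=1}, transposes to a commuting square with invertible right side. Transposing the whole lifting problem across the adjunction $\Del^1 \ogr - \dashv \RMap(\Del^1,-)$ and filling the remaining cells using that $\C$ is an $\infty$-bicategory (as in the proof of Lemma~\ref{lemma:dom_projection_enough_1-cartesian}), we build an extension $\wtl{\sig}$ whose transpose has precisely the shape demanded by Lemma~\ref{cart lift of dom n=2}(1), so that $\wtl{\sig}$ is right $\mathrm{d}$-outer. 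The left degenerate case is entirely symmetric: here the degeneracy of $\sig|_{\Del^{\{0,1\}}}$ and the hypothesis that $\rho|_{\Del^{\{0,1\}}}$ is $\mathrm{d}$-cocartesian guarantee that the relevant edges at both internal levels are invertible, so the extension we construct matches the shape of Lemma~\ref{cart lift of dom n=2}(2) and is left $\mathrm{d}$-outer by that statement.

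For condition~(2) we must show that right $\mathrm{d}$-outer triangles are stable under left whiskering and left $\mathrm{d}$-outer triangles under right whiskering. Given a $3$-simplex exhibiting such a whiskering as in Definition~\ref{def:whiskering}, we transpose it to a map out of $\Del^1 \ogr \Del^3$ in the appropriate variance, note that its two thin $2$-faces transpose to families of thin triangles in $\C$, and then check --- by restricting to the relevant $2$-faces and repeatedly filling thin triangles with the $\infty$-bicategory structure of $\C$, together with Remark~\ref{rem:thin-is-inner} --- that the transpose of the whiskered triangle again satisfies the thinness and invertibility conditions of Lemma~\ref{cart lift of dom n=2}. I expect this combinatorial bookkeeping across the Gray-tensor adjunction to be the main obstacle, since by Remark~\ref{rem:closed-whiskering} the abstract description in terms of slices only gives closure of outer triangles under whiskering by $\mathrm{d}$-(co)cartesian morphisms, which is not enough here.

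It remains to establish the ``only if'' halves of the statement. Recall that $\mathrm{d}$-(co)cartesian arrows are closed under equivalence in the arrow $\infty$-category, and that by Lemma~\ref{lem:invariance-triangle} left/right $\mathrm{d}$-outer triangles are closed under levelwise invertible natural transformations. By the uniqueness of $\mathrm{d}$-cartesian and $\mathrm{d}$-outer lifts recorded in Remark~\ref{uniqueness of cart 2-lifts}, every $\mathrm{d}$-cartesian $1$-simplex, every right $\mathrm{d}$-outer triangle, and every left $\mathrm{d}$-outer lift of a triangle with degenerate left leg is equivalent, in the relevant sense, to one of the explicit lifts constructed above; since the shapes appearing in Lemmas~\ref{cart lift of dom n=1} and~\ref{cart lift of dom n=2} are preserved by such equivalences (a triangle in an $\infty$-bicategory connected to a thin one by a $3$-simplex with invertible edges and thin intermediate faces is itself thin), the asserted characterizations follow.
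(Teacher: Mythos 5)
Your overall architecture matches the paper's: condition (3) of Definition~\ref{cart outer fib defi} via Remark~\ref{rem:dom_projection_weak_fibration} and Lemma~\ref{lemma:dom_projection_enough_1-cartesian}; condition (1) by reducing to degenerate legs, transposing across $\Del^1 \otimes - \dashv \RMap(\Del^1,-)$, and building lifts of the shapes in Lemma~\ref{cart lift of dom n=2} (the paper does this by feeding the transposed problem into Lemma~\ref{lem:technical-part} and then checking the leftover thinness constraints via the two-out-of-three property for thin triangles); and the ``only if'' characterizations via the uniqueness of lifts from Remark~\ref{uniqueness of cart 2-lifts} together with invariance under levelwise invertible transformations. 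These parts are fine, modulo the fact that you invoke the converse of Lemma~\ref{cart lift of dom n=1} midway through condition (1) before justifying it at the end --- harmless, since that converse only needs the existence of enough cartesian lifts, which you have already established at that point.

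The genuine gap is condition (2), the closure of right (resp.\ left) $\dom$-outer triangles under whiskering, which you explicitly defer as ``the main obstacle'' and do not carry out. You are right that Remark~\ref{rem:closed-whiskering} is not enough, but the direct bookkeeping on $\Del^1 \otimes \Del^3$ you propose is unnecessary once you notice the following. By your own uniqueness argument, every relevant outer triangle $\alp$ has the explicit shape of Lemma~\ref{cart lift of dom n=2}. Now a triangle and its left whiskering share their right leg $\Del^{\{1,2\}}$ (and dually for right whiskering and the left leg), hence they share the transposed square $\Del^1 \times \Del^{\{1,2\}}$ and the edge $\hat{\alp}|_{\Del^{\{1\}} \times \Del^{\{1,2\}}}$; so all the invertibility and thinness conditions of Lemma~\ref{cart lift of dom n=2} concerning that leg transport for free. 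The only condition left to propagate is the thinness of $\hat{\alp}|_{\Del^{\{1\}} \times \Del^2}$, and the collection of triangles of $\RMap(\Del^1,\C)$ satisfying it is precisely the preimage of the thin triangles of $\C$ under the \emph{codomain} projection; since thin triangles are closed under whiskering on both sides and the codomain projection is a functor, closure follows. Without this (or an equivalent) observation your proof of condition (2) is incomplete, and that condition is exactly what distinguishes an outer cartesian 2-fibration from a map that merely has enough outer lifts.
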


\begin{proof}
By Remark~\ref{rem:dom_projection_weak_fibration}, the map
\(\mathrm{d}\) is a weak fibration and by Lemma \ref{cart lift of dom n=1} it has a sufficient supply of \(\dom\)-cartesian lifts for 1-morphisms. By the essential unicity of \(\dom\)-cartesian lifts we deduce that all \(\dom\)-cartesian arrows are of the form described in Lemma \ref{cart lift of dom n=1}.

We now show that the triangles in \(\C\) 
have a sufficient supply of right \(p\)-outer lifts.
This translates into a lifting problem of the form:
\[
\begin{tikzcd}
\Lambda^2_2 \ar[r,"\rho"]\ar[d]& \RMap(\Delta^1,\C) \ar[d,"\text{d}"] \\
\Delta^2 \ar[r,"\gamma"] \ar[ur,dotted] &\C
\end{tikzcd},
\]
with \(\rho|_{\Del^{\{1,2\}}}\) a \(\mathrm{d}\)-cartesian edge, and so (as argued just above) of the form described in Lemma \ref{cart lift of dom n=1}. This means in particular that \(\rho|_{\Del^{\{1,2\}}}\) corresponds to a map $\Del^1_{\flat} \otimes \Del^{\{1,2\}}_{\flat} \to \C$ which sends both triangles to thin triangles and the edge \(\Del^{\{1\}} \times \Del^{\{1,2\}}\) to an invertible edge in \(\C\). Solving this lifting problem in a way that produces a triangle of \(\RMap(\Delta^1,\C)\) of the form described in Lemma~\ref{cart lift of dom n=2}(1) then corresponds to solving a lifting problem of the form
\begin{equation}
\begin{tikzcd}[column sep=large]
\label{lift pr 1}
\big(\Delta^1 \times \Lambda^2_2\displaystyle\mathop{\coprod}_{\Del^{\{0\}} \times \Lambda^2_2} \Del^{\{0\}} \times \Delta^2,T'\big) 
	\ar[r,"\hat{\rho} \cup \gamma"] \ar[d] & \C\\
(\Delta^1\times \Delta^2,T) \ar[ur,dotted]
\end{tikzcd}.
\end{equation}
where \(T'\) is obtained by intersection from \(T\), which in turn contains all triangles which are thin in \(\Del^1_\flat \otimes \Del^2_\flat\)
as well as \(\Del^{\{1\}} \times \Del^2\) 
and \(\Del^{\{(0,1),(0,2),(1,2)\}}\), and \(\hat{\rho}\) sends \(\Del^{\{1\}} \times \Del^{\{1,2\}}\) to an invertible edge in \(\C\).  
Since \(\C\) is an \(\infty\)-bicategory we may then extend \(\hat{\rho} \cup \gamma\) to a map
 \[
 	g\colon \big(\Delta^1\times \Lambda^2_2
 	\displaystyle\mathop{\coprod}_{\partial \Del^1 \times \Lambda^2_2} \partial\Delta^1\times \Delta^2,T''\big) \to \C.
 \]
where \(T''\) is obtained from \(T\) by intersection. Applying Lemma~\ref{lem:technical-part} we may extend \(g\) to a map \(g'\colon (\Del^1 \times \Del^2, T'' \cup \{\Del^{\{(0,0),(1,1),(1,2)\}}\}) \to \C\). We then observe that there is exactly one triangle in \(T\) which is not in \(T'' \cup \{\Del^{\{(0,0),(1,1),(1,2)\}}\}\), and that is the triangle \(\Del^{\{(0,0),(1,0),(1,1)\}}\). But this triangle is sent by \(g\) to a thin triangle in \(\C\) by~\cite[Proposition 3.4]{GagnaHarpazLanariEquiv}, since \(T\) contains \(\Del^{\{(0,0),(1,1),(1,2)\}}\), \(\Del^{\{(0,0),(1,0),(1,2)\}}\) and \(\Del^{\{(1,0),(1,1),(1,2)\}}\), and \(g\) sends \(\Del^{\{1\}} \times \Del^{\{1,2\}}\) 
to an invertible edge.

We now show that the triangles in \(\C\) have a sufficient supply of left \(p\)-outer lifts. Invoking Corollary~\ref{cor:only-degenerate}, it will suffice to test this only for triangles in \(\B\) whose left leg is degenerate.
This then translates into a lifting problem of the form:
\[
\begin{tikzcd}
\Lambda^2_0 \ar[r,"\rho"]\ar[d]& \RMap(\Delta^1,\C) \ar[d,"\text{d}"] \\
\Delta^2 \ar[r,"\gamma"] \ar[ur,dotted] &\C
\end{tikzcd},
\]
with \(\rho|_{\Del^{\{0,1\}}}\) a \(\mathrm{d}\)-cocartesian lift of an identity, and hence an invertible edge of \(\RMap(\Delta^1,\C)\). This means in particular that \(\rho|_{\Del^{\{0,1\}}}\) corresponds to a map $\Del^1_{\flat} \otimes \Del^{\{0,1\}}_{\flat} \to \C$ which sends both triangles to thin triangles and the edges \(\Del^{\{0\}} \times \Del^{\{0,1\}}\) and \(\Del^{\{1\}} \times \Del^{\{0,1\}}\) to invertible edges in \(\C\). Solving this lifting problem in a way that produces a triangle of \(\RMap(\Delta^1,\C)\) of the form described in Lemma~\ref{cart lift of dom n=2}(2) then corresponds to solving a lifting problem of the form
\begin{equation}
\begin{tikzcd}[column sep=large]
\label{lift pr 2}
\big(\Delta^1 \times \Lambda^2_0\displaystyle\mathop{\coprod}_{\Del^{\{0\}} \times \Lambda^2_0} \Del^{\{0\}} \times \Delta^2,T'\big) 
	\ar[r,"\hat{\rho} \cup \gamma"] \ar[d] & \C\\
(\Delta^1\times \Delta^2,T) \ar[ur,dotted]
\end{tikzcd}.
\end{equation}
where \(T'\) is obtained by intersection from \(T\), which in turn contains all triangles which are thin in \(\Del^1_{\flat} \otimes \Del^2_{\flat}\) as well as the triangles \(\Del^{\{1\}} \times \Del^2\) 
and \(\Del^{\{(0,0),(0,1),(1,1)\}}\), and \(\hat{\rho}\) sends \(\Del^{\{\eps\}} \times \Del^{\{0,1\}}\) to an invertible edge in \(\C\) for \(\eps=0,1\).  
Since \(\C\) is an \(\infty\)-bicategory we may then extend \(\hat{\rho} \cup \gamma\) to a map
 \[
 	g\colon \big(\Delta^1\times \Lambda^2_0
 	\displaystyle\mathop{\coprod}_{\partial \Del^1 \times \Lambda^2_0} \partial\Delta^1\times \Delta^2,T''\big) \to \C.
 \]
where \(T''\) is obtained from \(T\) by intersection. Applying the dual of Lemma~\ref{lem:technical-part} (Remark~\ref{rem:dual-form}) we may extend \(g\) to a map \(g'\colon (\Del^1 \times \Del^2, T'' \cup \{\Del^{\{(0,0),(0,1),(1,2)\}}\}) \to \C\). We then observe that there are exactly two triangles in \(T\) which are not in \(T''\), namely, \(\Del^{\{(0,0),(1,1),(1,2)\}}\) and \(\Del^{\{(0,1),(1,1),(1,2)\}}\). To finish the proof we will show that these two triangles are sent by \(g\) to thin triangles in \(\C\). For the first one, we note that since \(g\) sends \(\Del^{\{(0,0),(1,0),(1,1)\}}\), \(\Del^{\{(0,0),(1,0),(1,2)\}}\) and \(\Del^{\{1\}} \times \Del^2\) 
to thin triangles, then it must also send \(\Del^{\{(0,0),(1,1),(1,2)\}}\) to a thin triangle. Then, since \(g\) also sends \(\Del^{\{(0,0),(0,1),(1,1)\}}\) and \(\Del^{\{(0,0),(0,1),(1,2)\}}\) to thin triangles, and the edge \(\Del^{\{(0,0),(0,1)\}}\) to an equivalence, then it must send \(\Del^{\{(0,1),(1,1),(1,2)\}}\) to a thin triangle as well (see \cite[Proposition~3.4]{GagnaHarpazLanariEquiv}).

Having provided sufficiently many left and right \(\mathrm{d}\)-outer lifts of the form appearing in Lemma~\ref{cart lift of dom n=2},
the uniqueness of \(\mathrm{d}\)-outer lifts, as expressed for example in Remark~\ref{uniqueness of cart 2-lifts},
shows that all right \(\mathrm{d}\)-outer triangles whose right leg is cartesian are of the form described in~\ref{cart lift of dom n=2}(1),
and all left \(\mathrm{d}\)-outer triangles whose left leg is invertible 
are of the form described in~\ref{cart lift of dom n=2}(2).
To show that the collection of right (resp.~left) \(\mathrm{d}\)-outer triangles is closed under right (resp.~left) whiskering,
it will hence suffice to show that the collection of triangles \(\alp\colon \Del^2 \to \RMap(\Del^1,\C)\)
whose adjoint \(\hat{\alp}\colon \Del^1 \otimes \Del^2 \to \C\) sends \(\Del^{\{1\}} \times \Del^2\) to a thin triangle,
is closed under both left and right whiskering. Indeed, this is exactly the pre-image in \(\RMap(\Del^1,\C)\)
under the \emph{codomain projection} of the collection of thin triangles in \(\C\), and the collection of thin triangles is closed under whiskering from both sides.
\end{proof}

\begin{rem}
Passing to opposites, Theorem~\ref{thm:domain proj} implies that the codomain projection
\[\mathrm{cod}\colon\LMap(\Delta^1,\C)\rightarrow \C\] 
is 2-outer cocartesian fibration, whose cocartesian arrows and outer triangles admit similar descriptions. Note that we have switched not only from domain to codomain but also from \(\RMap\) to \(\LMap\). If we only switch from domain to codomain then the resulting projection
\[\mathrm{cod}\colon\RMap(\Delta^1,\C)\rightarrow \C\] 
is a 2-inner cocartesian fibration. This claim does not formally follow from the outer statement, but its proof can be obtained using a completely analogous argument, replacing the key extension result of Lemma~\ref{lem:technical-part} with one involving the pushout product of \(\partial \Del^1 \to \Del^1\) and a suitable \emph{inner} horn inclusion. Similarly, the projection
\[\mathrm{d}\colon\LMap(\Delta^1,\C)\rightarrow \C\] 
is a 2-inner cartesian fibration.
\end{rem}

\section{Enriched cartesian fibrations}\label{sec:enriched-fibrations}
In this section we define the notion of cartesian fibration in the context of marked-simplicial categories. The definition is motivated by the one given for 2-categories by Buckley in \cite{BuckleyFibred}, which we recall in \S\ref{2-fib of 2-cats}.

\subsection{Recollection: fibrations of 2-categories}
 \label{2-fib of 2-cats}
Fibrations of 2-categories were initially introduced by Hermida~\cite{HermidaFib}. 
A suitably modified definition was later given by Buckley~\cite{BuckleyFibred}, who also proved an (un)straightening-type result. In what follows we give a concise summary of the main results of loc.~cit., to be considered as a motivation for the discussion of simplicial categories in \S\ref{sec:car-fib-enriched}.

\begin{define}
	Let \(p\colon \E \to \B\) be a 2-functor between 2-categories. 
	\begin{itemize}
		\item A 1-cell \(f\colon x \to y\) in \(\E\) is \(p\)-\emph{cartesian} if the following square is a pullback of categories for every \(a\in \E\):
		\[\begin{tikzcd}
		\E(a,x) \ar[r,"f\circ-"] \ar[d,"p_{a,x}"{swap}]& \E(a,y)\ar[d,"p_{a,y}"]\\
		\B(pa,px)\ar[r,"p(f)\circ -"]& \B(pa,py)
		\end{tikzcd}\]
		\item A 2-cell \(\alpha\colon f \Rightarrow g\colon x \to y\) in \(\E\) is \(p\)-\emph{cartesian} if it is cartesian with respect to the induced functor \(p_{x,y}\colon \E(x,y)\to \B(px,py)\).
	\end{itemize}
\end{define}
The notion of cartesian fibration for 2-categories amounts to the existence of enough cartesian lifts, as in the 1-dimensional case, but it also requires an additional property: cartesian 2-cells must be closed under horizontal composition. 

\begin{define}
\label{d:fibrations of 2cats}
	A 2-functor between 2-categories \(p\colon \E \to \B\) is called a 2-\emph{fibration} if it satisfies the following properties:
	\begin{enumerate}
		\item for every object \(e\in \E\) and every 1-cell \(f\colon b \to p(e)\) there exists a \(p\)-cartesian 1-cell \(h\colon a \to  e\) in \(\E\) such that \(p(h)=f\);
		\item for every pair of objects \(x,y\) in \(\E\), the map \(p_{x,y}\colon \E(x,y)\to \B(px,py)\) is a cartesian fibration of categories;
		\item cartesian 2-cells are closed under horizontal composition, i.e.~for every triple of objects \((x,y,z)\) in \(\E\), the functor \(\circ_{x,y,z}\colon \E(y,z)\times \E(x,y)\to \E(x,z)\) sends \(p_{y,z}\times p_{x,y}\)-cartesian 1-cells to \(p_{x,z}\)-cartesian ones.
	\end{enumerate} 
\end{define}

Replacing cartesian lifts for 1-cells by cocartesian lifts and similarly for 2-cells one may obtain four 
different variants of fibration, corresponding to the four possible types of variance for pseudo-functors \(\B \to \nCat{2}\).

\begin{rem}
Condition (3) of Definition~\ref{d:fibrations of 2cats} is equivalent to requiring that given 1-cells in \(\E\) of the form \(f\colon w\to x\) and \(g\colon y \to z\), the whiskering functors \(-\circ f\colon E(x,y)\to \E(w,y)\) and \(g\circ - \colon \E(x,y)\to \E(x,z)\) preserve cartesian 2-cells. This follows from the fact that horizontal composition can be obtained from vertical composition and whiskering composition.
\end{rem}

The following result appears as Theorem 2.2.11 in \cite{BuckleyFibred}.
\begin{thm}
	There exists an equivalence of 3-categories between \(2\mathbf{Fib}_s(\B)\) and \([\B^{op}_{co},\nCat{2}]\), the former being the 3-category of fibrations equipped with a choice of cartesian lifts compatible with composition, while the latter is the 3-category of (strict) 2-functors into \(\nCat{2}\), natural transformations and modifications. 
\end{thm}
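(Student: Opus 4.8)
This is quoted from \cite[Theorem 2.2.11]{BuckleyFibred}, and we only sketch the strategy of the proof found there. The plan is to construct a \emph{straightening} 3\nbd-functor \(\Str\colon 2\mathbf{Fib}_s(\B)\to[\B^{op}_{co},\nCat{2}]\) and an \emph{unstraightening} 3\nbd-functor \(\Un\) in the opposite direction, and then to produce coherent invertible top cells exhibiting \(\Un\circ\Str\) and \(\Str\circ\Un\) as equivalent to the respective identities.

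On objects, \(\Str\) sends a split 2\nbd-fibration \(p\colon\E\to\B\) to the 2\nbd-functor \(F\) given on objects by \(F(b)=\E_b\), the fibre of \(p\) over \(b\); on a 1\nbd-cell \(f\colon b\to b'\) by the pullback 2\nbd-functor \(f^{\ast}\colon\E_{b'}\to\E_b\) determined by the chosen cartesian lifts; and on a 2\nbd-cell \(\alpha\colon f\Rightarrow g\) by the induced natural transformation between \(g^{\ast}\) and \(f^{\ast}\), the reversal of the direction of the 2\nbd-cell accounting for the \(\co\) in the target. Here the fact that \(f^{\ast}\) is well defined on 2\nbd-cells uses condition (2) of Definition~\ref{d:fibrations of 2cats}, namely that the chosen lifts of 2\nbd-cells are cartesian, while the 2\nbd-functoriality of each \(f^{\ast}\) and the compatibility of the \(f^{\ast}\)'s with horizontal composition rest on condition (3), the closure of cartesian 2\nbd-cells under whiskering. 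Finally, \emph{splitness} of the cleavage — the strict identities \(\id^{\ast}=\id\) and \(g^{\ast}f^{\ast}=(fg)^{\ast}\) — is precisely what makes \(F\) a strict 2\nbd-functor rather than merely a pseudo-functor.

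In the opposite direction, \(\Un\) is the 2\nbd-categorical Grothendieck construction: from \(F\colon\B^{op}_{co}\to\nCat{2}\) one forms the 2\nbd-category \(\int F\) whose objects are pairs \((b,x)\) with \(x\in F(b)\), whose 1\nbd-cells \((b,x)\to(b',x')\) are pairs \((f\colon b\to b',\phi\colon x\to f^{\ast}x')\), and whose 2\nbd-cells are pairs consisting of a 2\nbd-cell of \(\B\) together with a compatible 2\nbd-cell in the appropriate fibre; composition and the projection \(\int F\to\B\) are the evident ones, and one checks directly that this projection is a split 2\nbd-fibration. One then verifies that \(\Str(\Un(F))\cong F\) and that, for a split 2\nbd-fibration \(p\), there is a canonical isomorphism \(\Un(\Str(p))\cong\E\) over \(\B\). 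The bulk of the work — and the step I expect to be the main obstacle — is promoting these assignments to the full 3\nbd-categorical level: one must define \(\Str\) and its inverse on cartesian\nbd-cell\nbd-preserving 2\nbd-functors over \(\B\) (matched with natural transformations of 2\nbd-functors), on the 2\nbd-cells between these (matched with modifications), and on the remaining top\nbd-dimensional cells, and then check functoriality together with all the interchange and coherence identities at each level, keeping the \(\op/\co\) variance consistent throughout. No individual step is conceptually deep, but the accumulated bookkeeping is substantial; the details are carried out in \cite[\S 2.2]{BuckleyFibred}.
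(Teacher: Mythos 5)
Your proposal is correct and matches the paper's treatment: the paper gives no proof of this statement, simply recording it as Theorem~2.2.11 of \cite{BuckleyFibred}, which is exactly how you handle it. Your added sketch of the straightening/unstraightening argument via the 2-categorical Grothendieck construction, with splitness accounting for strictness and conditions (2) and (3) of Definition~\ref{d:fibrations of 2cats} accounting for the well-definedness of the pullback 2-functors and their compatibility with whiskering, is an accurate summary of Buckley's proof.
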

In the same paper, the author proves several weakenings of this statement, by looking at fibrations without a choice of lifts (which correspond to \emph{pseudofunctors}) and fibrations of bicategories.

\subsection{Cartesian fibrations of enriched categories}\label{sec:car-fib-enriched}

We now consider cartesian fibrations in the setting of \(\Catoo\)-categories. All definitions and statements can be dualized to the case of cocartesian fibrations by replacing all \(\Catoo\)-categories by their opposites.

\begin{define}
Let \(p\colon\E\rightarrow \B\) be a map of marked-simplicial categories. A morphism \(h\in\E(e',e)_0\) is said to be \(p\)-\emph{cartesian} if the induced square
\[\begin{tikzcd}[column sep=large]
\E(a,e')\ar[r,"h\circ-"] \ar[d,"p_{a,e'}"{swap}]&\E(a,e)\ar[d,"p_{a,e}"]\\
\B(pa,pe') \ar[r,"p(h)\circ-"]& \B(pa,pe)
\end{tikzcd}\]
is a homotopy pullback for the model structure on marked simplicial sets.
\end{define}

\begin{define}
	\label{cart fib marked cat}
Let \(p\colon \E \rightarrow\B\) be a fibration of \(\Catoo\)-categories. We say that \(p\) is an \emph{enriched cartesian fibration} if for any \(e \in \E\) and any morphism \(f\in\B(a,p(e))_0\) there exists a \(p\)-cartesian morphism \(h\in \E(e',e)_0\) for some \(e'\in\E\), such that \(p(h)=f\). 
\end{define}

\begin{define}
Let \(p\colon \E \rightarrow\B\) be a fibration of  \(\Catoo\)-categories. We say that \(p\) is an \emph{enriched 2-inner} (resp.~\emph{2-outer}) \emph{fibration} if it satisfies the following properties:
\begin{enumerate}
\item For every pair of objects \((x,y)\) in \( \E\), the map \(p_{x,y}\colon\E(x,y)\rightarrow \B(px,py)\) is a cartesian (resp.~cocartesian) fibration on the level of underlying simplicial sets.
\item For every 0-simplices \(u\colon y\rightarrow z\) and \(v\colon w \to x \) in \(\E\), the commutative squares
\[\begin{tikzcd}[column sep=large]
 \E(x,y)\ar[r,"u\circ-"]\ar[d,"p_{x,y}"{swap}]& \E(x,z)\ar[d,"p_{x,z}"] && \E(x,y)\ar[r,"-\circ v"]\ar[d,"p_{x,y}"]&\E(w,y)\ar[d,"p_{w,y}"]\\
\B(px,py) \ar[r,"p(u)\circ-"]& \B(px,pz) && \B(px,py)\ar[r,"-\circ p(v)"]& \B(pw,py)
\end{tikzcd}\]
are morphisms of cartesian (resp.~cocartesian) 
fibrations, i.e., the top horizontal maps in both squares preserve cartesian (resp.~cocartesian) edges.
\end{enumerate}
We will say that \(p\) is \ndef{an enriched 2-inner} (resp.~\ndef{2-outer}) \ndef{cartesian fibration} if it is an enriched 2-inner (resp.~2-outer) fibration and an enriched cartesian fibration.
\end{define}

Our goal in the present section is to prove the following:
\begin{thm}
\label{N detects fib cart} 
Let \(p\colon \E \to \B\) be a fibration of \(\Catoo\)-categories. Then \(p\colon \E \rightarrow \B\) is an enriched 2-inner (resp.~2-outer) cartesian 
fibration in the sense of Definition \ref{cart fib marked cat} if and only if
\[
 \rN^{\sca}(p)\colon \rN^{\sca} \E\rightarrow\rN^{\sca} \B
\]
is a 2-inner (resp.~2-outer) cartesian 
fibration of \(\infty\)-bicategories.
\end{thm}

The remainder of this section is devoted to the proof of Theorem~\ref{N detects fib cart}.

\begin{define}
We will denote by \(\Box^n = (\Del^1)^n\) the \emph{simplicial \(n\)-cube} and by \(\partial \Box^n\) its \emph{boundary}, so that the inclusion \(\partial \Box^n \subseteq \Box^n\) can be identified with the pushout-product of \(\partial \Del^1 \hrar \Del^1\) with itself \(n\) times. For \(i=1,...,n\) we %also 
denote by \(\sqcap^{n,i}_{\eps} \hrar \Box^{n}\) the iterated pushout-product 
\[[\partial \Del^1 \hrar \Del^1] \Box \hdots \Box [\Del^{\{\eps\}} \hrar \Del^1] \Box \hdots \Box [\partial \Del^1 \hrar \Del^1],\]
where \(\eps \in \{0,1\}\) and \([\Del^{\{\eps\}} \hrar \Del^1]\) appears in the \(i\)'th factor. 
\end{define}

Lurie introduces in \cite[Definition 3.1.1.1]{HTT} the class of \emph{cartesian anodyne} morphisms (called \emph{marked anodyne} in loc.~cit.), which is the smallest weakly saturated
class generated by a certain set of monomorphisms of marked simplicial sets listed in \cite[Definition 3.1.1.1]{HTT}, which contain, in particular, the collection of all (minimally marked) inner horn inclusions, as well as the marked outer horn inclusion \((\Lam^n_n,\{\Del^{\{n-1,n\}}\}) \hrar (\Del^n,\{\Del^{\{n-1,n\}}\})\). 
Dually, we shall call \emph{cocartesian anodyne} maps 
the smallest weakly
saturated class generated by the opposites of those maps (or simply those maps whose opposites are cartesian anodyne).
By~\cite[Proposition 3.1.2.3]{HTT}, (co)cartesian anodyne maps are closed under pushout-product
with monomorphisms.

\begin{lemma}
\label{marked cube}
For \(\eps \in \{0,1\}\), \(n \geq 1\) and \(1 \leq i \leq n\) let \(E^i_{\eps} \subseteq (\Box^n)_1\) be the set of all degenerate edges together with the edge \((\eps,...,\eps) \times \Del^1 \times (\eps,...,\eps)\) (where \(\Del^1\) sits in the \(i\)-th place). Then the inclusion of marked simplicial sets \((\sqcap^{n,i}_{1},E^i_1) \hrar (\Box^{n},E^i_1)\) is cartesian anodyne and the inclusion of marked simplicial sets \((\sqcap^{n,i}_{0},E^i_0) \hrar (\Box^{n},E^i_0)
\) is cocartesian anodyne.
\end{lemma}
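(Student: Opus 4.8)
The plan is to exhibit $(\sqcap^{n,i}_1,E)\hookrightarrow(\Box^n,E)$ as a pushout-product of a cofibration with a generating cartesian anodyne map, and then to invoke the stability of cartesian (resp.\ cocartesian) anodyne maps under pushout-products with cofibrations. First I would single out the $i$-th interval factor: writing $\Box^{n-1}$ for the product of the other $n-1$ copies of $\Del^1$, associativity and commutativity of the pushout-product of monomorphisms identify the inclusion $\sqcap^{n,i}_\eps\hookrightarrow\Box^n$ with the pushout-product of $[\partial\Box^{n-1}\hookrightarrow\Box^{n-1}]$ — which is itself the iterated pushout-product of the $n-1$ copies of $\partial\Del^1\hookrightarrow\Del^1$ — with $[\Del^{\{\eps\}}\hookrightarrow\Del^1]$, the latter interval occupying the $i$-th coordinate.

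Next I would match up the markings. Since an edge of a product of marked simplicial sets is marked precisely when both of its projections are marked, one checks directly that $(\Box^n,E)\cong(\Box^{n-1})^\flat\times(\Del^1)^\sharp$: the marked edges on the right-hand side are exactly the edges of $\Box^n$ whose projection to every factor other than the $i$-th is degenerate, and these are precisely the edges in $E$ together with the degenerate ones. In the same way, $(\sqcap^{n,i}_\eps,E)$ is identified with the pushout-product $[(\partial\Box^{n-1})^\flat\hookrightarrow(\Box^{n-1})^\flat]\,\Box\,[(\Del^{\{\eps\}})^\flat\hookrightarrow(\Del^1)^\sharp]$ in $\s^+$. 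The only point requiring a moment of care is the base case $n=1$, where $\partial\Box^0=\emptyset$ and the map in question is literally $(\Del^{\{\eps\}})^\flat\hookrightarrow(\Del^1)^\sharp$.

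It then suffices to observe two facts. First, $(\Del^{\{1\}})^\flat\hookrightarrow(\Del^1)^\sharp$ is a generating cartesian anodyne map — it is the instance $n=1$ of the family of generators $(\Lambda^n_n,\mathcal{E}\cap(\Lambda^n_n)_1)\hookrightarrow(\Del^n,\mathcal{E})$ of~\cite[Definition~3.1.1.1]{HTT}, with $\mathcal{E}$ the set of degenerate edges together with the final edge — and dually $(\Del^{\{0\}})^\flat\hookrightarrow(\Del^1)^\sharp$ is a generating cocartesian anodyne map. Second, $(\partial\Box^{n-1})^\flat\hookrightarrow(\Box^{n-1})^\flat$ is a cofibration of marked simplicial sets. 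Since the class of cartesian anodyne (resp.\ cocartesian anodyne) maps is stable under pushout-product with cofibrations~\cite[Proposition~3.1.2.3]{HTT}, both halves of the lemma follow at once. I do not anticipate any genuine obstacle; the only thing that must be done carefully is the bookkeeping of markings in the first two steps. If one preferred not to cite the pushout-product stability, the alternative would be to filter the nondegenerate simplices of $\Box^n$ not contained in $\sqcap^{n,i}_1$ — organised by the shuffles of $(\Del^1)^n$ — and attach them one at a time as pushouts of inner horn inclusions or of marked right-horn inclusions $(\Lambda^m_m,\mathcal{E}\cap(\Lambda^m_m)_1)\hookrightarrow(\Del^m,\mathcal{E})$; this merely unpacks the combinatorics already contained in~\cite[Proposition~3.1.2.3]{HTT}.
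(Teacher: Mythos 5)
Your proposal is correct and is essentially the paper's own argument, which likewise deduces the lemma from the pushout-product stability of (co)cartesian anodyne maps together with the fact that \(\Del^{\{1\}} \hookrightarrow (\Del^1)^{\sharp}\) is cartesian anodyne and \(\Del^{\{0\}} \hookrightarrow (\Del^1)^{\sharp}\) is cocartesian anodyne. You simply spell out the bookkeeping of markings (the identification \((\Box^n,E)\cong(\Box^{n-1})^\flat\times(\Del^1)^\sharp\)) that the paper leaves implicit.
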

\begin{proof}
We note that the \(\eps=0\) and \(\eps=1\) statements imply each other by passing to opposites. 
We hence just prove the cartesian case. 
Ignoring the order of factors, we may identify the map \(\sqcap^{n,i}_{1} \hrar \Box^n\) with the box product of \(\partial \Box^{n-1} \hrar \Box^{n-1}\) and \(\Del^{\{1\}} \hrar \Del^1\). The non-degenerate \(m\)-simplices of \(\Box^{n-1}\) which are not in \(\partial \Box^{n-1}\) correspond to maps of posets \([m] \to [1]^{n-1}\) whose projection to each factor \([1]\) is surjective. It then follows that the initial vertex of such an \(m\)-simplex must be \((0,...,0)\) and the final vertex must be \((1,...,1)\). Adding these non-degenerate simplices one by one in an order that respects dimensions (that is, first all the 1-dimensional ones, then all the 2-dimensional ones, etc., until dimension \(n-1\)) results in a factorization of the map \((\sqcap^{n,i}_{1},E^i_1) \hrar (\Box^{n},E^i_1)\) into a finite composite of pushouts of maps of the form 
\[ \left(\Del^{\{1\}} \times \Del^m \displaystyle\mathop{\coprod}_{\Del^{\{1\}} \times \partial \Del^m} \Del^1 \times \partial \Del^m, \{\Del^1 \times \Del^{\{m\}}\}\right)  \to (\Del^1 \times \Del^m, \{\Del^1 \times \Del^{\{m\}}\}) \]
for \(m \geq 1\). 
It will hence suffice to show that each of these maps is cartesian anodyne. 
For \(\ell=0,...,m\) let
\[\tau_\ell\colon \Del^{m+1} \rightarrow \Del^1 \times \Del^m\] be the map given on vertices by the formula
\[ \tau_\ell(j) = \left\{\begin{matrix} (0,j) & j \leq \ell \\ (1,j-1) & j > \ell \end{matrix}\right. \]
and for \(k=0,\dots,m+1\) let \(Z_k \subseteq (\Del^{1} \times \Del^m,\{\Del^1 \times \Del^{\{m\}}\})\) be the marked simplicial subset obtained as the union of 
\([\Del^1 \times \partial \Del^{m}] \coprod_{\Del^{\{1\}}\times \partial \Del^{m}}[\Del^{\{1\}}\times \Del^{m}]\)	
and the simplices \(\tau_\ell\) for \(0 \leq \ell < k\). Set
\[
	Z_0 \stackrel{\text{def}}{=} \Big([\Del^1 \times \partial \Del^{m}] \coprod_{\Del^{\{1\}}\times \partial \Del^{m}}[\Del^{\{1\}}\times \Del^{m}],\{\Del^1 \times \Del^{\{m\}}\}\Big).
\]
We then have an ascending filtration of marked simplicial sets
\[
Z_0 \subseteq Z_{1} \subseteq \dots \subseteq Z_{m+1} = (\Del^1 \times \Del^{m},\{\Del^1 \times \Del^{\{m\}}\}) \ .
\]
For each \(k=0,\dots,m-1\) we then find a pushout square of marked simplicial sets
\[ 
\xymatrix{
(\Lam^{m+1}_{k+1})^{\flat} \ar[r]\ar[d] & Z_{k} \ar[d] \\
(\Del^{m+1})^{\flat} \ar[r] & Z_{k+1} \\
}\]
so that \(Z_{k} \to Z_{k+1}\) is inner anodyne, and in particular cartesian anodyne. Finally, in the last step \(k=m\) we find a pushout square of the form
\[ 
\xymatrix{
(\Lam^{m+1}_{m+1},\{\Del^{\{m,m+1\}}\}) \ar[r]\ar[d] & Z_{m} \ar[d] \\
(\Del^{m+1},\{\Del^{\{m,m+1\}}\}) \ar[r] & Z_{m+1} \\
}\]
so that \(Z_m \to Z_{m+1}\) is cartesian anodyne, as desired.
\end{proof}

We recall from~\cite{GagnaHarpazLanariLaxLimits} the comparison of the notion of (co)cartesian 1-cells between the enriched and scaled models:

\begin{prop}[{\cite[Proposition~3.1.3]{GagnaHarpazLanariLaxLimits}}]
\label{comparison enriched 1-simpl}
Given a fibration \(p\colon \E \to \B\) of \(\Catoo\)-categories, an arrow in \(\E\) is \(p\)-cartesian if and only if the corresponding 1-simplex of \(\Nsc \E\) is \(\Nsc(p)\)-cartesian.
\end{prop}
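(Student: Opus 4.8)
The plan is to reduce each of the two conditions to the same statement about squares of mapping $\infty$-categories, and then to match the two resulting statements via the comparison of mapping objects for the scaled coherent nerve. I would first note that, since $p$ is a fibration between fibrant objects of $\nCat{\s^+}$ and $\Nsc$ is right Quillen, the map $\Nsc(p)\colon \Nsc\E\to\Nsc\B$ is a fibration between $\infty$-bicategories, hence in particular a weak fibration in the sense of Definition~\ref{d:weak} (see~\cite[Remark 2.2]{GagnaHarpazLanariEquiv}). By~\cite[Proposition 2.3.7]{GagnaHarpazLanariLaxLimits} this means that a $1$-simplex of $\Nsc\E$ is $\Nsc(p)$-cartesian if and only if it is weakly $\Nsc(p)$-cartesian, and by the characterization of weakly cartesian edges in terms of mapping objects~\cite[Proposition 2.3.3]{GagnaHarpazLanariLaxLimits}, a $1$-simplex $\bar{h}\colon e'\to e$ of $\Nsc\E$ is weakly $\Nsc(p)$-cartesian precisely when, for every object $a$, the square
\[
\Hom^{\triangleright}_{\Nsc\E}(a,e') \xrightarrow{\bar{h}\circ-} \Hom^{\triangleright}_{\Nsc\E}(a,e)\ ,\qquad \Hom^{\triangleright}_{\Nsc\B}(pa,pe') \longrightarrow \Hom^{\triangleright}_{\Nsc\B}(pa,pe)
\]
is a homotopy pullback of $\infty$-categories (and the corepresented variant characterizes weakly cocartesian $1$-simplices).

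On the enriched side, a $0$-simplex $h\in\E(e',e)_0$ is $p$-cartesian exactly when the square of mapping marked simplicial sets in its defining condition is a homotopy pullback for every $a$; since $\E$ and $\B$ are fibrant, this is a square of $\infty$-categories, each marked by its equivalences. I would then invoke the comparison of the two models for mapping $\infty$-categories: for a fibrant $\D\in\nCat{\s^+}$ and objects $x,y$ there is an equivalence $\D(x,y)\xrightarrow{\simeq}\Hom^{\triangleright}_{\Nsc\D}(x,y)$, natural in $\D$ and compatible with composition (Lurie~\cite[\S 4.2]{LurieGoodwillie}; see also~\cite{GagnaHarpazLanariEquiv}). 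Applying this to $\E$, $\B$ and $p$, and using that it carries the map $h\circ-$ to the map $\bar{h}\circ-$ for the $1$-simplex $\bar{h}$ of $\Nsc\E$ corresponding to $h$, one obtains a levelwise equivalence between the enriched square and the scaled square displayed above. As homotopy pullback squares are stable under levelwise equivalence, $h$ is a $p$-cartesian $0$-simplex if and only if $\bar{h}$ is weakly $\Nsc(p)$-cartesian, hence — by the previous paragraph — if and only if $\bar{h}$ is $\Nsc(p)$-cartesian. The cocartesian case is proved in the same way using the corepresented mapping $\infty$-categories $\Hom^{\triangleright}_{\Nsc\E}(-,x)$, or equivalently by applying the cartesian case to $p^{\op}\colon\E^{\op}\to\B^{\op}$, which is again a fibration of $\Catoo$-categories with $\Nsc(p^{\op})$ identified with $\Nsc(p)^{\op}$.

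The one genuinely non-formal ingredient is the comparison of mapping $\infty$-categories together with its compatibility with the composition operation — concretely, making precise that $h\circ-$ is carried to $\bar{h}\circ-$ — which is part of the basic package relating the scaled coherent nerve to the other models of $(\infty,2)$-categories. I expect that to be the main point requiring care; granting it, the remainder is a direct assembly of the results recalled in~\S\ref{s:fibrations} and above.
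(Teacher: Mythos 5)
First, a point of reference: the paper does not actually prove Proposition~\ref{comparison enriched 1-simpl} --- it is imported verbatim from \cite[Proposition~3.1.3]{GagnaHarpazLanariLaxLimits} --- so there is no in-paper argument to compare yours against. Judging from the two-dimensional analogues that this paper \emph{does} prove (Lemmas~\ref{cart 2 simp part A} and~\ref{cart 2 simp part B}), the argument in the cited source runs along different lines from yours: the defining lifting problems for $\Nsc(p)$-cartesianness are transposed along the adjunction $\fCs \dashv \Nsc$ and identified with lifting problems of marked simplicial sets against cube inclusions $\sqcap^{n,i}_{\eps} \hookrightarrow \Box^{n}$, which are marked anodyne by (the one-dimensional precursor of) Lemma~\ref{marked cube}; the remaining implication is then extracted from a mapping-space comparison as in diagram~\eqref{eq:compare-mapping}. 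Your route --- reduce both notions to a homotopy-pullback condition on mapping $\infty$-categories and match the two squares --- is architecturally sound and is a genuine alternative: it trades the combinatorics of $\fCs$ for an invariance statement about the comparison of mapping objects.

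The trade is not free, however, and the step you defer is exactly where the content of the proposition lives. The characterization of weakly cartesian edges in \cite[Proposition~2.3.3]{GagnaHarpazLanariLaxLimits} is extracted from the slice map $\E_{/\bar h^{\sharp}} \to \E_{/e} \times_{\B_{/pe}} \B_{/p\bar h^{\sharp}}$, so the horizontal map in the scaled square is a specific zigzag; $\Hom^{\triangleright}_{\Nsc\E}(a,e')$ carries no literal composition operation ``$\bar h \circ -$''. To match this square with the strictly defined enriched square $h \circ -$ you must show that the equivalence $\E(a,e) \simeq \Hom^{\triangleright}_{\Nsc\E}(a,e)$ (itself a zigzag through $\Hom_{\Nsc\E}(a,e) \cong \Un^{\sca}\E(a,e)$ and \cite[Proposition~2.24]{GagnaHarpazLanariEquiv}) intertwines the two, naturally in $a$ and compatibly over $\B$. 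That coherence statement is true, but it is not available as an off-the-shelf citation, and establishing it is comparable in effort to the direct combinatorial argument it is meant to replace. So: correct skeleton, honestly flagged gap, but the single load-bearing joint is left unwelded; as written the proposal does not yet constitute a proof.
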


Since the morphisms in a given \(\Catoo\)-category are in bijection with the edges of its scaled coherent nerve we readily obtain:
\begin{cor}
\label{comparison enriched car fib}
A fibration \(p\colon \E \to \B\) of \(\Catoo\)-categories is cartesian if and only if \(\Nsc(p)\) is a cartesian fibration of \(\infty\)-bicategories (in the sense of Definition~\ref{d:car-fibration}).
\end{cor}

We now consider the analogous question on the level of triangles.

\begin{rem}
	Let \(\E\) be a \(\Catoo\)-category and consider two \(2\)-simplices \(\alpha\) and \(\alpha'\) of \(\Nsc \E\).
	If a \(3\)-simplex \(\rho\) of \(\Nsc\E\) exhibits \(\alpha\) as left-congruent to \(\alpha'\) (cf.~Definition~\ref{def:congruent})
	in the following form
	\begin{nscenter}
		\begin{tikzpicture}[scale=1.5]
			\square{%
				/square/label/.cd,
                0={$x$}, 1={$x$}, 2={$y$}, 3={$z$},
				01={}, 12={$g$}, 23={$h$}, 03={$f$}, 02={$g$}, 13={$i$}, 
				012={$=$}, 023={$\alpha$}, 013={$\alpha'$}, 123={$\simeq$},
				0123={},
				/square/labelstyle/.cd,
				012={description},
				/square/arrowstyle/.cd,
				01={equal}, 012={phantom}
			}
		\end{tikzpicture}
	\end{nscenter}
	then the \(2\)-simplex \(\rho^*\) of \(\E(x, z)\) corresponding to \(\rho\) goes from
	the 1-simplex \(\alpha^*\) of \(\E(x, z)\) corresponding to \(\alpha\) to the compposition
	of the 1-simplex \((\alpha')^*\) corresponding to \(\alpha'\) followed by an equivalence.
	Hence, in the arrow \(\infty\)-category of \(\E(x, z)\) the 3-simplex \(\rho\) induces an equivalence
	between \(\alpha^*\) and the composition of \((\alpha')^*\) with an arrow of \(\E(x, z)\) which is an equivalence.
\end{rem}

\begin{lemma}
	\label{cart 2 simp part A}
Let \(p\colon \E \to \B\) be a fibration of \(\Catoo\)-categories. 
Then a triangle \(\alpha \colon \Delta^2 \to \Nsc \E\) of the form 
\begin{nscenter}
	\begin{tikzpicture}[scale=1.2]
	\triangle{%
		/triangle/label/.cd,
		0={$x$}, 1={$y$}, 2={$z$},
		01={$f$}, 12={$g$}, 02={$h$},
		012={$\alpha$}
	}
	\end{tikzpicture}
\end{nscenter}
is left \(\Nsc(p)\)-inner if and only if the corresponding 1-simplex \(\alpha^{\ast}\colon h\to gf\) in \(\E(x,z)\) is \(p_{x,z}\)-cartesian and maps to a \(p_{x',z}\)-cartesian arrow in \(\E(x',z)\) after pre-composing with any arrow \(x'\to x\). Similarly, \(\alpha\) is right \(\Nsc(p)\)-inner if and only if the corresponding 1-simplex \(\alpha^{\ast}\colon h\to gf\) in \(\E(x,z)\) is \(p_{x,z}\)-cartesian and maps to a \(p_{x,z'}\)-cartesian arrow in \(\E(x,z')\) after post-composing with any arrow \(z\to z'\).
\end{lemma}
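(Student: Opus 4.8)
The plan is to unwind the definition of left $\Nsc(p)$-inner directly in terms of mapping spaces, without first reducing to triangles with a degenerate leg (that special case being essentially Remark~\ref{rem:hom-cart-fib}). Write $x=\alpha(0)$, $y=\alpha(1)$, $z=\alpha(2)$ and let $f,g,h$ be the legs $\alpha|_{\Del^{\{0,1\}}},\alpha|_{\Del^{\{1,2\}}},\alpha|_{\Del^{\{0,2\}}}$, regarded also as objects $h,g$ of the slice $(\Nsc\E)_{/z}$. By Definition~\ref{def:inner-triangle}, $\alpha$ is left $\Nsc(p)$-inner iff the edge $e_\alpha\colon h\to g$ of $(\Nsc\E)_{/z}$ determined by $\alpha$ — which lies over $f$ in $\Nsc\E$ — is strongly cartesian for the projection $\pi\colon(\Nsc\E)_{/z}\to\Nsc\E\times_{\Nsc\B}(\Nsc\B)_{/pz}$. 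Since $\pi$ is an outer fibration (by \cite[Corollary~2.20]{GagnaHarpazLanariEquiv}, cf.\ Remark~\ref{rem:closed-whiskering}), \cite[Proposition~2.3.7]{GagnaHarpazLanariLaxLimits} lets me replace ``strongly'' by ``weakly'' cartesian, and \cite[Proposition~2.3.3]{GagnaHarpazLanariLaxLimits} then rephrases the latter: for every object $v\colon x'\to z$ of $(\Nsc\E)_{/z}$, the square with corners $\Map_{(\Nsc\E)_{/z}}(v,h)$ and $\Map_{(\Nsc\E)_{/z}}(v,g)$ lying over the corresponding square of mapping spaces in $\Nsc\E\times_{\Nsc\B}(\Nsc\B)_{/pz}$ is a homotopy pullback.

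Next one unwinds these slice mapping spaces, using the explicit model for slices of $\infty$-bicategories from~\cite{GagnaHarpazLanariLaxLimits} and the identification of the mapping $\infty$-categories of $\Nsc\E$ with the enriched mapping objects $\E(-,-)$ (Notation~\ref{n:mapping}, Proposition~\ref{comparison enriched 1-simpl}). Concretely, $\Map_{(\Nsc\E)_{/z}}(v,h)$ is the total space of a fibration over $\Map_{\E}(x',x)$ whose fibre over $u\colon x'\to x$ is the mapping space $\Map_{\E(x',z)}(v,h\circ u)$ of the $\infty$-category $\E(x',z)$ (the space of $2$-cells $v\Rightarrow h\circ u$); likewise $\Map_{(\Nsc\E)_{/z}}(v,g)$ fibres over $\Map_{\E}(x',y)$ with fibres $\Map_{\E(x',z)}(v,g\circ u')$, and the bottom row fibres analogously with fibres $\Map_{\B(px',pz)}(pv,-)$. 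The horizontal maps of the square cover the postcomposition map $f\circ-\colon\Map_{\E}(x',x)\to\Map_{\E}(x',y)$, so the whole square lies over $f\circ-$ and, over a given $u\colon x'\to x$, restricts to the square
\[
\begin{tikzcd}
\Map_{\E(x',z)}(v,h\circ u) \ar[r]\ar[d] & \Map_{\E(x',z)}(v,g\circ f\circ u)\ar[d]\\
\Map_{\B(px',pz)}(pv,p(h\circ u)) \ar[r] & \Map_{\B(px',pz)}(pv,p(g\circ f\circ u)),
\end{tikzcd}
\]
whose horizontal maps are postcomposition with the edge $\alpha^{\ast}\circ u\colon h\circ u\to g\circ f\circ u$, i.e.\ the image of $\alpha^{\ast}$ under $-\circ u\colon\E(x,z)\to\E(x',z)$ (here I use that $\fCs$ carries the composition of $\fCs(\Del^2)$ to that of $\E$, so $F_\alpha$ sends the vertex $\{0,1,2\}$ of $\Map_{\fCs(\Del^2)}(0,2)$ to $g\circ f$ and the edge to $\alpha^{\ast}$). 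A square of total spaces lying over $f\circ-$ is a homotopy pullback iff each of these fibrewise squares is, for every $v\colon x'\to z$ and every $u\colon x'\to x$.

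To finish one applies \cite[Proposition~2.3.3]{GagnaHarpazLanariLaxLimits} once more, now to the functor $p_{x',z}\colon\E(x',z)\to\B(px',pz)$ and the edge $\alpha^{\ast}\circ u$: the displayed fibrewise square is a homotopy pullback for all $v$ iff $\alpha^{\ast}\circ u$ is weakly $p_{x',z}$-cartesian, which by \cite[Proposition~2.3.7]{GagnaHarpazLanariLaxLimits} (valid since $p_{x',z}$ is an isofibration, hence an outer fibration) is equivalent to $\alpha^{\ast}\circ u$ being $p_{x',z}$-cartesian. Taking $u=\id_x$ recovers the condition that $\alpha^{\ast}$ is $p_{x,z}$-cartesian, and letting $u$ vary over all arrows into $x$ gives exactly the stated condition; this settles the left-inner case. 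The right-inner case runs identically with the coslice projection $(\Nsc\E)_{x/}\to\Nsc\E\times_{\Nsc\B}(\Nsc\B)_{px/}$ (again an outer fibration by Remark~\ref{rem:closed-whiskering}), the edge $\alpha$ determines in $(\Nsc\E)_{x/}$ (strongly cocartesian there precisely when $\alpha$ is right $\Nsc(p)$-inner), and the whiskering map $g\circ-$ together with postcomposition by arrows $z\to z'$ replacing $-\circ f$ and precomposition by arrows into $x$; the analysis of coslice mapping spaces then yields that $\alpha$ is right $\Nsc(p)$-inner iff $w\circ\alpha^{\ast}$ is $p_{x,z'}$-cartesian for every $w\colon z\to z'$.

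The step I expect to be the main obstacle is the second one: identifying the mapping $\infty$-categories and mapping spaces of the scaled slice $(\Nsc\E)_{/z}$ with the enriched data, exhibiting the fibrations over $\Map_{\E}(x',x)$ with fibres the $2$-cell spaces $\Map_{\E(x',z)}(v,h\circ u)$, and checking that the horizontal maps of the square are postcomposition with $\alpha^{\ast}\circ u$. Everything else — the two invocations of \cite[Propositions~2.3.3 and~2.3.7]{GagnaHarpazLanariLaxLimits} and the outer-fibration property of the slice and coslice projections — is applied essentially verbatim.
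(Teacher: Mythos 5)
Your overall strategy is coherent and the logical skeleton (strongly $\Rightarrow$ weakly cartesian via the outer-fibration property, the mapping-space characterization, and the fibrewise criterion for homotopy pullbacks over the base square with identity vertical legs) is sound, but the proof has a genuine gap exactly where you flag it: the identification of $\Map_{(\Nsc\E)_{/z}}(v,h)$ as a fibration over $\Map_{\Nsc\E}(x',x)$ with fibres $\Map_{\E(x',z)}(v,h\circ u)$, together with the identification of the horizontal maps of the fibrewise squares as postcomposition with $\alp^{\ast}\circ u$. This ``lax-slice mapping-category formula'' for the scaled slice construction is not available in the cited references in the form you need (the results of \cite{GagnaHarpazLanariEquiv} and \cite{LurieGoodwillie} identify the mapping objects of $\Nsc\E$ itself, not of its lax slices, and the slice lemmas of \cite{GagnaHarpazLanariLaxLimits, GagnaHarpazLanariGrayLaxFunctors} give zig-zags of trivial fibrations between slices rather than fibrewise descriptions of their hom-objects). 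Establishing it would require a combinatorial analysis of the scaled join/slice comparable in length to the whole lemma, including a compatibility of composition in the slice with the enriched composition, so the proposal as written does not constitute a proof.

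It is worth noting how the paper's argument avoids this obstacle by splitting the two directions. For the ``if'' direction it transposes the lifting problem of Remark~\ref{rem:unwinding} along $\fCs\dashv\Nsc$ and computes $\fCs$ of the relevant horn inclusion explicitly, reducing to a lifting problem of marked cubes $(\sqcap^{n-1,n-1}_1)^{\flat}\subseteq(\Box^{n-1})^{\flat}$ against $p_{x',z}$, which is solved by the marked-anodyne Lemma~\ref{marked cube}; the hypothesis that all whiskerings $\alp^{\ast}\circ u$ are cartesian is used precisely to mark the required cube edges. For the ``only if'' direction it uses only the already-established zig-zag of equivalences $\Hom^{\triangleright}_{\Nsc\E}(x,z)\simeq\Hom_{\Nsc\E}(x,z)\simeq\E(x,z)$ over $\B$, Remark~\ref{rem:hom-cart-fib}, and the closure of left $p$-inner triangles under left whiskering (Remark~\ref{rem:closed-whiskering}) to propagate the conclusion to all $u\colon x'\to x$. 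If you want to salvage your unified approach, you would need to first prove the slice mapping-object formula; otherwise the paper's two-pronged argument is the more economical route.
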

\begin{proof}
We prove the left inner case. The proof for right inner triangles is completely analogous.
Suppose first that \(\alpha^{\ast}\) is \(p_{x,z}\)-cartesian in \(\E(x,z)\) and maps to a \(p_{x',z}\)-cartesian arrow in \(\E(x',z)\) after pre-composing with any arrow \(x'\to x\). 
By Remark~\ref{rem:unwinding} we have to provide a solution for every lifting problem of the form
\[\begin{tikzcd}
& \Delta^{\{n-2,n-1,n\}} \ar[dl] \ar[dr, "\alpha"]\\
\Lambda^{n}_{n-1} \ar[rr] \ar[d]& &\Nsc \E \ar[d,"\Nsc(p)"]\\
\Delta^n \ar[rr]&  &\Nsc \B 
\end{tikzcd}\]
for \(n \geq 3\).
Transposing along the adjunction \(\fCs\dashv \Nsc\), this corresponds to a lifting problem of the form 
\begin{equation}
\label{t}
\begin{tikzcd}
& \fCs\Delta^{\{n-2,n-1,n\}} \ar[dl] \ar[dr, "\alpha^{\ast}"]\\
\fCs \Lambda^{n}_{n-1} \ar[rr,"f"] \ar[d]& & \E \ar[d,"p"]\\
\fCs\Delta^n \ar[rr,"g"]&  & \B
\end{tikzcd}
\end{equation} where we have committed a small abuse of language denoting by \(\alpha^{\ast}\) also the transpose map
\(\fCs\Delta^2 \to \E\) induced by \(\alpha\). As a straightforward calculation shows, the lifting problem in \eqref{t} corresponds, at the level of marked simplicial sets, to the lifting problem
\[\begin{tikzcd}
(\sqcap^{n-1,n-1}_1)^{\flat} \ar[d] \ar[r] & \E(x',z)\ar[d,"p_{x',z}"]\\
(\Box^{n-1})^{\flat}\ar[r] & \B(px',pz) \ ,
\end{tikzcd}\]
where \(x' := f(0)\).
Moreover, 
the edge \((1,\ldots,1)\times \Delta^1\) is mapped by the top horizontal map to the 
whiskering of the 1-simplex \(\alp^*\)  
by a sequence of 0-simplices connecting \(x'\) and \(x\), and it is therefore \(p_{x',z}\)-cartesian in \(\E(x',z)\). The desired lift hence exists by 
Lemma \ref{marked cube}.

We now prove the ``only if'' direction, and so we assume that \(\alp\) is left \(p\)-inner. By Lemma~\ref{lem:reduce} we may find a left-degenerate triangle \(\alp'\) such that \(\alp\) is left congruent to \(\alp'\), so that \(\alp'\) is also left \(p\)-inner by Lemma~\ref{lem:2-out-of-3}. Then \(\alp'\) has the same first and last vertex as \(\alp\) and hence determines an edge \((\alp')^*\) in \(\E(x,z)\). Furthermore, as pointed out in the previous remark the 3-simplex exhibiting \(\alp'\) as left congruent to \(\alp\)
also determines an equivalence in the arrow \(\infty\)-category of \(\E(x,z)\) between the the edge
\(\alp^*\) and the edge given by \((\alp')^*\) followed by an equivalence. Since every equivalence is \(p_{x,z}\)-cartesian and moreover the property of being \(p_{x,z}\)-cartesian is invariant under equivalences we may replace \(\alp\) by \(\alp'\), so that we may simply assume that \(\alp\) is left-degenerate. We now consider the commutative diagram of marked simplicial sets
\begin{equation}\label{eq:compare-mapping} 
\xymatrix{
\Hom^{\triangleright}_{\Nsc\E}(x,z) \ar[r]^-{\simeq}\ar[d]^{p^{\triangleright}_{x,y}} & \Hom_{\Nsc\E}(x,z)\ar[d] & \E(x,z) \ar[l]_-{\simeq}\ar[d]^{p_{x,y}} \\
\Hom^{\triangleright}_{\Nsc\B}(x,z) \ar[r]^-{\simeq} & \Hom_{\Nsc\B}(x,z) & \B(x,z) \ar[l]_-{\simeq} \ ,
}
\end{equation}
in which the horizontal maps are marked categorical equivalences and the vertical maps are fibrations between fibrant objects in the marked categorical model structure. Here, the horizontal equivalences in the right column are given by the canonical isomorphism \(\Hom_{\Nsc\E}(x,z) \cong \Un^{\sca}\E(x,z)\), see~\cite[Remark 4.2.1]{LurieGoodwillie}, and the horizontal equivalences in the left column are established in~\cite[Proposition 2.33]{GagnaHarpazLanariEquiv}. The triangle \(\alp\) determines arrows in both the top left and top right marked simplicial sets, and the images of these two arrows coincides in \(\Hom_{\Nsc\E}(x,z)\) by direct inspection. We hence obtain that the arrow \(\alp^*\) is \(p_{x,z}\)-cartesian in \(\E(x,z)\) if and only if the arrow determined by \(\alp\) in \(\Hom^{\triangleright}_{\Nsc\E}(x,z)\) is \(p^{\triangleright}_{x,z}\)-cartesian. The latter, and hence the former, indeed holds when \(\alp\) is left \(p\)-inner by Remark~\ref{rem:hom-cart-fib}. The desired implication is now a consequence of the closure of left \(p\)-inner triangles under left whiskering (see Remark~\ref{rem:closed-whiskering}).
\end{proof}

We now come to the outer counterpart of Lemma~\ref{cart 2 simp part A}.

\begin{lemma}
\label{cart 2 simp part B}
Let \(p\colon \E \to \B\) be a fibration of \(\Catoo\)-categories. 
Let \(\alpha^{\ast}\) be a triangle in \(\Nsc \E\) of the form
\begin{nscenter}
	\begin{tikzpicture}[scale=1.2]
	\triangle{%
		/triangle/label/.cd,
		0={$x$}, 1={$y$}, 2={$z$},
		01={$f$}, 12={$g$}, 02={$h$},
		012={$\alpha^{\ast}$}
	}
	\end{tikzpicture}
\end{nscenter}
Then the following holds:
\begin{enumerate}
\item
If the associated 1-simplex \(\alpha^{\ast}\colon h\to gf\) is \(p_{x,z}\)-cocartesian in \(\E(x,z)\) and \(f\colon x \to y\) is \(\Nsc(p)\)-cocartesian then \(\alp\) is left \(\Nsc(p)\)-outer.
\item
If the associated 1-simplex \(\alpha^{\ast}\colon h\to gf\) is \(p_{x,z}\)-cocartesian in \(\E(x,z)\) and \(g\colon y \to z\) is \(\Nsc(p)\)-cartesian then \(\alp\) is right \(\Nsc(p)\)-outer.
\item
If \(\alpha\) is left \(\Nsc(p)\)-outer and \(f\) is a degenerate edge 
then \(\alpha^{\ast}\) is \(p_{x,z}\)-cocartesian.
\item
If \(\alpha\) is right \(\Nsc(p)\)-outer and \(g\) is a degenerate edge 
then \(\alpha^{\ast}\) is \(p_{x,z}\)-cocartesian.
\end{enumerate}
\end{lemma}
\begin{proof}
Statements (3) and (4) follows from Remark~\ref{rem:hom-cart-fib} by using the commutative diagram~\eqref{eq:compare-mapping} as in the proof of Lemma~\ref{cart 2 simp part A}, and statements (1) and (2) imply each other by passing to opposites. We now prove (1). Suppose that \(f\) 
is \(\Nsc(p)\)-cocartesian and that \(\alpha^{\ast}\) is \(p_{x,z}\)-cocartesian in \(\E(x,z)\), and let us prove that \(\alp\) is left \(\Nsc(p)\)-outer. 
By Remark~\ref{rem:unwinding} we have to provide a solution for every lifting problem of the form
\begin{equation}
\begin{tikzcd}
& \Delta^{\{0,1,n\}} \ar[dl] \ar[dr, "\alpha^{\ast}"]\\
\Lambda^{n}_{0} \ar[rr] \ar[d]& &\Nsc \E \ar[d,"\Nsc(p)"]\\
\Delta^n \ar[rr]&  &\Nsc \B
\end{tikzcd}
\end{equation}
Transposing along the adjunction \(\fCs\dashv \Nsc\) we obtain an equivalent lifting problem of the form
\[\begin{tikzcd}
& \fCs\Delta^{\{0,1,n\}} \ar[dl] \ar[dr, "\alpha^{\ast}"]\\
\fCs\Lambda^{n}_{0} \ar[rr,"f"] \ar[d]& & \E \ar[d,"p"]\\
\fCs \Delta^n \ar[rr,"g"]&  & \B \ .
\end{tikzcd}\]
A simple combinatorial analysis shows that this amounts to compatibly solving the lifting problems determined by the back and front faces of the cube
\begin{equation}\label{eq:the-big-cube}
\begin{tikzcd}
	& \sqcap^{n-1,1}_0 \ar[dd] \ar[rr,"f_{0{,}n}"] & & \E(x,z) \ar[dd,"p_{x{,}z}"]\\
	\partial \Box^{n-2}
 \ar[ur,"-\circ \{0{,}1\}"]\ar[dd] \ar[rr,crossing over] \ar[rr, "f_{1{,}n}"{pos=0.6}] & & \E(y,z) \ar[ur,"-\circ f"] \\
	& \Box^{n-1}  \ar[rr,"g_{0{,}n}"{pos=0.35}] & & \B(px,pz) \\
	\Box^{n-2}
 \ar[ur,"-\circ \{0{,}1\}"] \ar[rr,"g_{1{,}n}"] & & \B(py,pz) \ar[ur,"-\circ p(f)"{swap}]
    \ar[from=2-3, to=4-3, crossing over, "\ p_{y{,}z}"{pos=0.65}]
\end{tikzcd}
\end{equation}
where we have identified 
\[\fCs \Del^n(0,n) = \Box^{n-1} \quad \fCs \Lambda^n_0(0,n) = \sqcap^{n-1,1}_0 \]
\[ \fCs \Delta^n(1,n) = \Box^{n-2} \quad\text{and}\quad \fCs \Lambda^n_0(1,n) = \partial\Box^{n-2}.\]
Equivalently, we may consider this as a lifting problem in the arrow category of marked simplicial sets, involving the morphism between arrows encoded by the left square against the morphism between arrows encoded by the right square. We may endow this arrow category with the projective model structure, so that cofibrations are Reedy cofibrations and fibrations are levelwise. We then find that in the lifting problem encoded by the above cube, the left arrow constitutes a cofibration between cofibrant objects, while the right arrow is a fibration between fibrant objects (by our assumption that \(p\) is a fibration of \(\Catoo\)-categories). 

Now, it is a general fact concerning model categories that the lifting property in a given square involving a cofibration between cofibrant objects against a fibration between fibrant objects is a \emph{homotopy invariant property}, that is, it does not change if one replaces the given square by a levelwise weakly equivalent one of the same nature.
In particular, in proving the existence of a lift we may as well replace the cube~\eqref{eq:the-big-cube} with a levelwise weakly equivalent one, as long as we make sure it also has the property that its left square is Reedy cofibrant and its right square is levelwise fibrant with vertical legs fibrations. We now choose to make such a modification by simply replacing the corner \(\E(y,z)\) with the fibre product 
\[\X := \E(x,z) \times_{\B(px,pz)} \B(py,pz),\] 
which is also a homotopy fibre product since the vertical legs are fibrations between fibrant objects. The map \(\E(y,z) \to \X\) is an equivalence: indeed, by Proposition~\ref{comparison enriched 1-simpl} \(f\in \E(x,y)_0\) is a cocartesian arrow, and hence the right square in~\eqref{eq:the-big-cube} is homotopy cartesian. We conclude that the new cube is levelwise equivalent to the old one, while clearly still keeping the same property of having its left square Reedy cofibrant and its right square levelwise fibrant with vertical maps fibrations. At the same time, by its construction, the data of a lift in the modified cube is the same as a lift in its back square
\[
\begin{tikzcd}
\sqcap^{n-1,1}_0 \ar[r] \ar[d] & \E(x,z) \ar[d,"p_{x,z}"]\\
\Box^{n-1} \ar[r]& \B(px,pz) \ ,
\end{tikzcd}
\]
where the edge corresponding to \(\Delta^1 \times (0,\ldots,0)\) in \(\sqcap^{n-1,1}_0\) is sent to a \(p_{x,z}\)-cocartesian edge in \(\E(x,z)\) by assumption. A solution therefore exists by Lemma \ref{marked cube}, thus concluding the proof of the proposition.
\end{proof}

\begin{prop}
\label{N detects fib inner outer} 
Let \(p\colon \E \to \B\) be a fibration of \(\Catoo\)-categories. Then \(p\) is an enriched 2-inner (resp.~2-outer) fibration if and only if \[
 \rN^{\sca}(p)\colon \rN^{\sca} \E\rightarrow\rN^{\sca} \B
\]
is a 2-inner (resp.~ 2-outer) fibration of \(\infty\)-bicategories.
\end{prop}
\begin{proof}
We first note that since \(p\) is a fibration between fibrant objects the same holds for \(\rN^{\sca}(p)\), and so the latter is always a weak fibration.
We now recall that the vertices in \(\rN^{\sca}(\E)\) correspond exactly to the objects of \(\E\), and the edges \(f\colon x \to y\) in \(\rN^{\sca}(\E)\)
correspond exactly to a pair of objects \(x,y \in \E\) and a vertex \(f\in \E(x,y)\). In addition,  triangles 
\[ 
\xymatrix{
& y \ar[dr]^{g} & \\
x\ar[ur]^{f} \ar[rr]^{h} && z
}\]
correspond bijectively to triples of objects \(x,y,z \in \E\), triples of vertices \(f \in \E(x,y),g \in \E(y,z), h \in \E(x,z)\) and an edge \(\tau\colon h \Rightarrow g \circ f\) in \(\E(x,z)\).

Considering inner lifts of triangles, it follows directly from Lemma~\ref{cart 2 simp part A} that the triangles in \(\rN^{\sca}(\B)\) admits a 
sufficient supply of left (resp.~right) \(\rN^{\sca}(p)\)-inner lifts if and only if each \(p_{x,y}\colon \E(x,y) \to \B(x,y)\) has a sufficient supply of cartesian lifts, 
and these cartesian lifts are closed under pre-composition (resp.~post-composition) in \(\E\). 
We may then conclude that \(\rN^{\sca}(p)\) is an 2-inner fibration if and only if \(\E\) is a 2-inner fibration of \(\Catoo\)-categories.

We now consider the outer case. By Corollary~\ref{cor:only-degenerate-enhanced} we may restrict attention to triangles in \(\B\) with one leg degenerate and lifts whose same leg is degenerate in \(\E\). We then deduce from Lemma~\ref{cart 2 simp part B} that the triangles in \(\rN^{\sca}(\B)\) admits a sufficient supply of left (resp.~right) \(p\)-outer lifts if and only if each \(p_{x,y}\colon \E(x,y) \to \B(x,y)\) has a sufficient supply of cocartesian lifts. Since the definition of 2-outer fibration contains explicitly the closure under left/right whiskering (which corresponds, up to equivalence, to pre/post composition), we can conclude that \(\rN^{\sca}(p)\) is a 2-outer fibration if and only if \(\E\) is an enriched 2-outer fibration of \(\Catoo\)-categories, as desired.
\end{proof}

We can now deduce our main result of interest:
\begin{proof}[Proof of Theorem~\ref{N detects fib cart}]
Our goal is to compare enriched \(2\)-inner cartesian fibrations of \(\Catoo\)-categories
with \(2\)-inner cartesian fibrations of \(\infty\)-bicategories.
The cartesian fibrational part is given by Corollary~\ref{comparison enriched car fib} while the \(2\)-inner fibrational part is dealt with by Proposition~\ref{N detects fib inner outer}.
Combining the two results the theorem is thereby proven.
\end{proof}

We finish this section by collecting a few corollaries which can be easily deduced from the comparison of Theorem~\ref{N detects fib cart}.

\begin{cor}\label{cor:left-is-right}
Let \(p\colon \E \to \B\) be a 2-inner (co)cartesian fibration. Then a triangle \(\sig\colon \Del^2 \to \E\) is left \(p\)-inner if and only if it is right \(p\)-inner.
\end{cor}
\begin{proof}
Since \(\rN^{\sca}\) is a right Quillen equivalence there exists a fibration \(q\colon \C \to \D\) of \(\Catoo\)-categories which fits in a commutative diagram of the form
\begin{equation}
\begin{tikzcd}
    \E \ar[d,"p"{swap}] \ar[r,"\simeq"] & \rN^{\sca} \C \ar[d,"\rN^{\sca}q"]\\
    \B \ar[r,"\simeq"] & \rN^{\sca} \D \ ,
\end{tikzcd}
\end{equation}
where the horizontal maps are equivalences of \(\infty\)-bicategories. Now the right vertical arrow is a bicategorical fibration, being the image under a right Quillen functor of a Dwyer--Kan fibration, while the left vertical arrow is a bicategorical fibration by Proposition~\ref{cart are fib}. Applying Proposition~\ref{prop:invariance} we now obtain that the right vertical map is a 2-inner/outer (co)cartesian fibration and that the top horizontal map preserves and detects left/right inner triangles. It will hence suffice to prove the left and right \(\rN^{\sca}(p)\)-inner triangles coincide in \(\rN^{\sca}(\C)\). By Proposition~\ref{cart 2 simp part A} this amounts to showing that for every \(x,y \in \C\) and morphism \(e\in \C(x,y)_1\) in the mapping \(\infty\)-category, the condition that \(e\) is cartesian with respect to \(q_{x,y}\colon \C(x,y) \to \D(qx,qy)\) and remains cartesian after post-composing with any morphism \(y \to z\) is equivalent to the condition that \(e\) is \(q_{x,y}\)-cartesian and remains cartesian after pre-composing with any morphism \(w \to x\). Indeed, by Theorem~\ref{N detects fib cart} we have that \(q\colon \C \to \D\) is an enriched 2-inner cartesian fibration of \(\Catoo\)-categories, and so both conditions are equivalent to \(e\) simply being \(q_{x,y}\)-cartesian.
\end{proof}

\begin{cor}
	\label{cor:2fib are 1fib}
    Let \(\E\) and \(\B\) two \(\infty\)-bicategories.
	\begin{enumerate}
	\item
A given 2-inner (co)cartesian fibration \(p\colon \E \to \B\) is equivalent to a 1-inner (co)car\-te\-sian fibration if and only if every triangle is left and right inner.
\item
A given 2-outer (co)cartesian fibration \(p\colon \E \to \B\) is equivalent to a 1-outer (co)car\-te\-sian fibration if and only if every triangle whose left leg is \(p\)-cocartesian is left outer and every triangle whose right leg is \(p\)-cartesian is right outer.
\end{enumerate}
\end{cor}
\begin{proof}
From the explicit description of Remark~\ref{rem:unwinding} one immediately finds that if \(p\) is a 1-inner (co)cartesian fibration then every triangle is both left and right \(p\)-inner. Similarly, if \(p\) is a 1-outer cartesian fibration then any (co)cartesian arrow is automatically strongly (co)cartesian by~\cite[Proposition 2.3.7]{GagnaHarpazLanariLaxLimits} (and its dual), and so 
every triangle whose left leg is \(p\)-cocartesian is left \(p\)-outer and any triangle whose right leg is \(p\)-cartesian is right \(p\)-outer.

To prove the ``if'' direction, we now invoke the fact that \(\rN^{\sca}\) is a right Quillen equivalence to deduce the existence of fibration \(q\colon \C \to \D\) of \(\Catoo\)-categories which fits in a commutative diagram of the form
\begin{equation}\label{eq:rigid}
\begin{tikzcd}
    \E \ar[d,"p"{swap}] \ar[r,"\simeq"] & \rN^{\sca} \C \ar[d,"\rN^{\sca}q"]\\
    \B \ar[r,"\simeq"] & \rN^{\sca} \D \ ,
\end{tikzcd}
\end{equation}
whose horizontal maps are equivalences of \(\infty\)-bicategories. Now the right vertical arrow is a bicategorical fibration, being the image under a right Quillen functor of a Dwyer-Kan fibration, while the left vertical arrow is a bicategorical fibration by Proposition~\ref{cart are fib}. Applying Proposition~\ref{prop:invariance} we now obtain that the right vertical map is a 2-inner/outer (co)cartesian fibration,
and hence by Theorem~\ref{N detects fib cart} the functor \(q\colon \C \to \D\) is an enriched 2-inner/outer (co)cartesian fibration of \(\Catoo\)-categories. For every \(x,y \in \E\) with images \(x',y'\in \Nsc\C\) (which we can identify with objects of \(\C\)) we may then consider the commutative diagram
\begin{equation}\label{eq:compare-mapping-2} 
\xymatrix{
\Hom^{\triangleright}_{\E}(x,y) \ar[d]\ar[r]^-{\simeq} & \Hom^{\triangleright}_{\Nsc\C}(x',y') \ar[r]^-{\simeq}\ar[d] & \Hom_{\Nsc\C}(x',y')\ar[d] & \C(x',y') \ar[l]_-{\simeq}\ar[d] \\
\Hom^{\triangleright}_{\B}(px,py) \ar[r]^-{\simeq} & \Hom^{\triangleright}_{\Nsc\D}(qx',qy') \ar[r]^-{\simeq} & \Hom_{\Nsc\D}(qx',qy') & \D(qx',qy') \ar[l]_-{\simeq} \ ,
}
\end{equation}
in which all vertical arrows are cartesian fibrations in the inner case and cocartesian fibrations in the outer case. Our assumption implies in particular that every left-degenerate triangle in \(\E\) is left \(p\)-inner/outer, which by Remark~\ref{rem:hom-cart-fib} implies that the left-most vertical map in~\eqref{eq:compare-mapping-2} is a left fibration in the outer case and a right fibration in the inner case. The same consequently holds for all vertical maps in~\eqref{eq:compare-mapping-2}, which implies that \(q\colon \C \to \D\) is an enriched 1-inner/outer (co)cartesian fibration of \(\Catoo\)-categories. By~\cite[Proposition 3.1.3]{GagnaHarpazLanariLaxLimits} this means that \(\Nsc q\colon \Nsc\C \to \Nsc\D\) is a 1-inner/outer (co)cartesian fibration. Since the square~\eqref{eq:rigid} is an equivalence between two bicategorical fibrations between fibrant objects, these two fibrations satisfy the same right lifting properties. Applying this to the right lifting properties of Definition~\ref{def:inner-outer}, we conclude that if the right one is a 1-inner fibration then so is the left, and if the right one is a 1-outer fibration then so is the left. We may consequently deduce that \(p\colon \E \to \B\) is a 1-inner/outer (co)cartesian fibration, as desired.
\end{proof}

\bibliographystyle{amsplain}
\bibliography{biblio}
\end{document}